\newcommand{\h}{0.866025}
\DeclareRobustCommand{\SkipTocEntry}[5]{}}{%
\DeclareRobustCommand{\SkipTocEntry}[4]{}}
\theoremstyle{plain}
\newtheorem{thm}{Theorem}[section]
\newtheorem{prop}[thm]{Proposition}
\newtheorem{cor}[thm]{Corollary}
\newtheorem{lem}[thm]{Lemma}
\newtheorem{conj}[thm]{Conjecture}
\newtheorem{prob}[thm]{Problem}
\newtheorem{defn}[thm]{Definition}
\theoremstyle{definition}
\newtheorem{ex}[thm]{Example}
\newtheorem{rem}[thm]{Remark}
\newcommand{\setcond}[2]{\left\{ #1 \,:\, #2 \right\}}
\DeclareMathOperator{\conv}{conv}
\newcommand{\N}{{\mathds{N}}}
\newcommand{\Q}{{\mathds{Q}}}
\newcommand{\R}{{\mathds{R}}}
\newcommand{\Z}{{\mathds{Z}}}
\newcommand{\Km}{{\mathcal{K}}}
\newcommand\blfootnote[1]{%
  \begingroup
  \renewcommand\thefootnote{}\footnote{#1}%
  \addtocounter{footnote}{-1}%
  \endgroup
}
\DeclarePairedDelimiter\floor{\lfloor}{\rfloor}
\DeclareMathOperator{\AF}{AFP}
\DeclareMathOperator{\AFC}{AFC}
\DeclareMathOperator{\MV}{V}
\DeclareMathOperator{\mv}{v}
\DeclareMathOperator{\mw}{w}
\DeclareMathOperator{\V}{V}
\DeclareMathOperator{\Vol}{Vol}
\title[Inequalities between mixed volumes of convex bodies: 
volume bounds]{Inequalities between mixed volumes of convex bodies: 
volume bounds for the Minkowski sum}
\author{Gennadiy~Averkov}
\address{Fakult\"at 1, BTU Cottbus-Senftenberg, Platz der Deutschen Einheit 1, 03046 Cottbus, Germany}
\email{averkov@b-tu.de}
\author{Christopher~Borger}
\address{Fakult\"at f\"ur Mathematik, Otto-von-Guericke-Universit\"at Magdeburg, Universit\"atsplatz 2, 39106 Magdeburg, Germany}
\email{christopher.borger@ovgu.de}
\author{Ivan~Soprunov}
\address{Department of Mathematics and Statistics, Cleveland State University,  2121 Euclid Ave, Cleveland, Ohio, 44115 USA}
\email{i.soprunov@csuohio.edu}
\begin{document}
\selectlanguage{english}

\maketitle

\begin{abstract}
In the course of classifying generic sparse polynomial systems which
are solvable in radicals, Esterov recently 
showed that the volume of the
Minkowski sum $P_1+\dots+P_d$ of $d$-dimensional lattice polytopes is bounded from
above by a function of order $O(m^{2^d})$, 
where $m$ is the mixed volume of the tuple
$(P_1,\dots,P_d)$. This is a consequence of the well-known
Aleksandrov-Fenchel inequality. Esterov also posed the
problem of determining a sharper bound. We show how additional
relations between mixed volumes can be employed to improve the 
bound to $O(m^d)$, which is asymptotically sharp.
We furthermore prove a sharp exact upper bound in dimensions $2$ and
$3$. Our results generalize to tuples of arbitrary convex bodies
with volume at least one.
\end{abstract}

\blfootnote{
{\emph{Keywords:}} Aleksandrov-Fenchel inequality, geometric inequalities, mixed volumes, sparse polynomial systems 

{\emph{MSC 2020:}} 14M25, 52A39, 52A40, 52B20}

\section{Introduction}

\subsection{Combinatorial structure of systems of algebraic equations with $m$ solutions}

Consider Laurent polynomials 
\[
	f_1,\ldots,f_d \in \mathbb{C}[z_1^{\pm 1}\,\ldots,z_d^{\pm 1}]
\] with  fixed Newton polytopes $P_1,\ldots,P_d$ and generic coefficients.
By the famous \emph{{Bernstein--Khovanskii--Kouchnirenko (BKK) theorem}} \cite{Bernstein75} (see also \cite[Section 7.5]{CLO05}), the number of solutions to the corresponding generic system of equations 
\begin{equation} \label{eq:l:system}
	f_1 = \cdots = f_d = 0
\end{equation}
in the complex torus $(\mathbb{C} \setminus \{0\})^d$ depends only on the tuple of Newton polytopes $P=(P_1,\ldots,P_d)$ and is equal to $\MV(P_1,\ldots,P_d)$, the so-called normalized mixed volume of $P_1,\ldots,P_d$.  This means that the number of solutions $m$ of a generic system \eqref{eq:l:system} can be computed purely combinatorially. It is an interesting task to revert this process and be able to infer structural and/or quantitative properties of the tuples $P$ associated to systems with a given number $m \in \Z_{> 0}$ of solutions. Recently, a number of results in this direction have been obtained. In \cite{Esterov2015} systems with exactly one solution have been completely classified by Esterov and Gusev. Esterov and Gusev \cite{Esterov2016} also classified systems with at most $4$ solutions in the case when all Newton polytopes $P_1,\ldots,P_d$ coincide up to translations. The authors of this manuscript classified, using an algorithmic approach, all systems with up to four solutions for $d=3$ and all systems with up to 10 solutions for $d=2$, see \cite{mvClass2019}. It is natural to expect a unifying  structure for all tuples $P= (P_1,\ldots,P_d)$ with small values of the mixed volume $m$. For example, such structural results were obtained in \cite{Esterov2015} for $m=1$ and conjectured by Esterov and Gusev for $m=2$ (private communication). However, as $m$ gets larger, we do not expect structural results for all tuples to hold, so it makes more sense to concentrate on the quantitative aspects, such as  volume bounds for the polytopes $P_i$ and their Minkowski sums. 

In this manuscript we focus on the case when all the Newton polytopes $P_1,\dots, P_d$ are full-dimensional. Esterov \cite{Esterov2019} has shown that in this case the volume of the Minkowski sum 
\[
	\Sigma(P):=P_1 + \cdots + P_d
\] 
has the asymptotic order at most $O(m^{2^d})$, as $m \to \infty$. This bound allows to control the sizes of the $P_i$ in the tuple $P$. In particular, it implies that the number of possible tuples $P$ of $d$-dimensional lattice polytopes with a given value of the mixed volume ${m}$ is finite, up to the natural equivalence consisting of permutation of the polytopes within the tuple, independent lattice translations of the polytopes, and a common unimodular transformation of all the polytopes of the tuple. In the course of showing this bound, Esterov \cite{Esterov2019}
also raised the question of determining a sharper bound for the volume of $\Sigma(P)$.

While our motivation comes from the theory of Newton polytopes, we do not exploit any combinatorial properties of lattice polytopes in this paper. In fact, our approach works in a more general context of convex bodies. However, we prefer to rescale the usual $d$-dimensional Euclidean volume by a factor of  $d!$, as it is common in the theory of Newton polytopes. We denote this normalized volume by~$\Vol$. We remark that any other rescaling of the Euclidean volume would work just as well. 

\subsection{Asymptotic behavior of $\Vol(\Sigma(P))$} We sketch the approach of Esterov. 
Consider a tuple $K= (K_1,\ldots,K_d)$ of convex bodies in $\R^d$ satisfying $\Vol(K_1) \ge 1,\ldots, \Vol(K_d) \ge 1$.
One can represent $\Vol(\Sigma(K))$ as the sum 
\begin{align}
	\label{vol:sum:via:mv}
	\Vol(\Sigma(K)) = \sum_{i_1,\ldots,i_d \in \{1,\ldots,d\}}\MV(K_{i_1},\ldots,K_{i_d}) 
\end{align} of all possible mixed volumes that can be built from 
 convex bodies $K_1,\ldots,K_d$ and then relate the mixed volumes $\MV(K_{i_1},\ldots,K_{i_d})$ via the \emph{Aleksandrov-Fenchel inequality}  \cite[Theorem~7.3.1]{Schneider2014}
\begin{equation}\label{AFI}\tag{AF}
	\MV(A,B,C)^2 \ge \MV(A,A,C) \MV(B,B,C),\quad\text{where }\ C=(C_3,\dots,C_d),
\end{equation}
which holds for any  convex bodies $A, B, C_3,\ldots,C_d$ in $\R^d$. Considering the system 
\begin{align}
	\label{AF:system}
	\V_{i,j,k_3,\ldots,k_{d}}^2  & \ge \V_{i,i,k_3,\ldots,k_{d}} \V_{j,j,k_3,\ldots,k_{d}},\quad \V_{k_1,\ldots,k_d}\ge 1, & & \forall \ i,j, k_1,\ldots,k_d \in \{1,\ldots,d\}
\end{align}
formally, as a system of inequalities in variables $\V_{i_1,\ldots,i_d}$, and using the condition $\V_{1,2,\ldots,d}=m$, Esterov deduced 
 a bound on each $\V_{i_1,\ldots,i_d}$ in terms of $m$ and $d$ by combining the inequalities of the system \eqref{AF:system}.
This produced the asymptotic estimate 
\begin{align}
\label{V:sum:Esterov}
\Vol(\Sigma(K))  \leq O(m^{2^d}),\quad \text{as } \ m \to \infty.
\end{align}
We call this approach \emph{the black-box application of \eqref{AFI}}.  %

The bound \eqref{V:sum:Esterov} shows that there exists an estimate of the form $\Vol(\Sigma(K)) = O(m^{\epsilon(d)})$, with $\epsilon(d) \le 2^d$. It is easy to see that $\epsilon(d)$ is at least $d$, since for $K=(mK_d,K_d,\dots, K_d)$ 
with $\Vol(K_d) = 1$, 
one has $\Vol(\Sigma(K))= (m+ d-1)^d$. We have been able to verify that, by the black-box application of \eqref{AFI}, the best exponent $\epsilon(d)$ that one can get satisfies $3^{(d-2)/3} \le \epsilon(d) \le 3^{d/3}$, which is surprisingly far from $d$, for large $d$. A priori, there might be different reasons for this situation: $\epsilon(d)$ might be much larger  than $d$ or \eqref{AFI}, applied in the black-box style, is too weak in the context of the problem. In the beginning of this project it was hard for us to believe that the latter could be the case, because \eqref{AFI} are very general inequalities that directly imply and subsume many other inequalities related to volumes of convex bodies, with the \emph{isoperimetric} and \emph{Brunn--Minkowski inequalities} among the most prominent examples. 
Nevertheless, we have been able to prove the following result (see Theorem~\ref{thm:asymp_Kound} for an explicit bound).

\begin{thm}\label{T:main result}
	Among all  convex bodies $K_1,\ldots,K_d$ in $\R^d$ satisfying
	\begin{align*}
		\Vol(K_1) \ge 1, \ldots, \Vol(K_d) \ge 1,\quad\text{and }\ \V(K_1,\ldots,K_d) = m,
	\end{align*}
	the maximum of $\Vol(K_1 + \cdots + K_d)$ is of order $O(m^d)$, as $m \to \infty$. 
\end{thm}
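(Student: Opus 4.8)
The plan is to reduce the statement to a uniform upper bound on every individual mixed volume appearing in the expansion \eqref{vol:sum:via:mv}. Grouping the $d^d$ ordered terms by their content, I would write $\Vol(\Sigma(K)) = \sum_{\alpha}\binom{d}{\alpha}\V(K^\alpha)$, where $\alpha$ ranges over the lattice points of $\{\alpha\in\Z_{\ge0}^d : \alpha_1+\dots+\alpha_d=d\}$, where $\binom{d}{\alpha}=\tfrac{d!}{\alpha_1!\cdots\alpha_d!}$, and where $\V(K^\alpha):=\V(K_1^{(\alpha_1)},\dots,K_d^{(\alpha_d)})$ denotes the mixed volume with $K_i$ repeated $\alpha_i$ times. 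The number of summands is $\binom{2d-1}{d}$, a constant depending only on $d$, so it suffices to prove $\V(K^\alpha)\le C(d)\,m^d$ for a constant $C(d)$. In fact I would aim for the sharper claim $\V(K^\alpha)\le C(d)\,m^{\max_i\alpha_i}$, since $\max_i\alpha_i\le d$; the exponent $\max_i\alpha_i$ is the correct one, as one already sees from the pure terms.

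The two extreme types of terms are easy and set the template. For a pure term $\alpha=d\,e_i$, the generalized Aleksandrov--Fenchel inequality $\V(K_1,\dots,K_d)^d\ge\prod_j\Vol(K_j)$, combined with $\Vol(K_j)\ge1$, gives $\Vol(K_i)=\V(K^{de_i})\le m^d$, which is exactly $m^{\max_i\alpha_i}$ and is attained by $K=(mK_d,K_d,\dots,K_d)$. For a single-pile term, where one coordinate equals $M:=\max_i\alpha_i$ and all others lie in $\{0,1\}$, I would apply the product form of \eqref{AFI}, namely $\V(A_1,\dots,A_M,C)^M\ge\prod_{t=1}^M\V(A_t^{(M)},C)$, taking $C$ to be the $(d-M)$-tuple of the unit coordinates, $A_1=K_i$ (the pile), and $A_2,\dots,A_M$ to be precisely the bodies missing from $\alpha$. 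The balanced tuple on the left is then a permutation of $(K_1,\dots,K_d)$, so its mixed volume is $m$; each auxiliary factor $\V(A_t^{(M)},C)$ is $\ge1$ by the same product inequality together with $\Vol(K_j)\ge1$; and one obtains $\V(K^\alpha)\le m^M$ in a single step. The crucial feature is that the product form of \eqref{AFI} reaches the balanced configuration at once, so the exponent of $m$ equals $M$ rather than growing multiplicatively.

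The main obstacle is the general, multi-pile case. The naive remedy---peeling off one unit of excess at a time by a pairwise use of \eqref{AFI}, i.e. bounding $\V(K^\alpha)\le\V(K^{\alpha'})^2$ for a more balanced $\alpha'$ and iterating---squares the exponent at every step and reproduces Esterov's $O(m^{2^d})$; indeed the number of excess units can be as large as $d-2$, giving $m^{2^{d-2}}$, which already exceeds $m^d$ for $d\ge5$. One is tempted to fix this by the clean ``multi-index AM--GM'' inequality $\V(K_1,\dots,K_d)^N\ge\prod_{t=1}^N\V(K^{\alpha^{(t)}})$ whenever $\alpha^{(1)}+\dots+\alpha^{(N)}=N\cdot\mathbf 1$, which would yield $\V(K^\alpha)\le m^{\max_i\alpha_i}$ with constant $1$; but this inequality is false (already for $\alpha=(2,2,0,0)$ in $\R^4$ one can violate $m^2\ge\V(K_1,K_1,K_2,K_2)\,\V(K_3,K_3,K_4,K_4)$ with a box example), reflecting the fact that $\log\V(K^\alpha)$ fails to be concave in all directions on the simplex. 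This is precisely where ``additional relations between mixed volumes'' must enter.

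To overcome this I would run an induction on the excess $\sum_i(\alpha_i-1)_+=d-|\mathrm{supp}\,\alpha|$ (equivalently, on the number of piles), in which each reduction step lowers the excess while multiplying the bound by a single factor of $m$ and by a dimensional constant, so that the exponent of $m$ accumulates additively and stays $\le\max_i\alpha_i\le d$. Concretely, at a coordinate $p$ with $\alpha_p=M\ge2$ and a coordinate $q$ with $\alpha_q=0$, I would combine a pairwise \eqref{AFI} that trades one copy of $K_p$ for $K_q$ with a product-form inequality relating the resulting tuple to the balanced one, retaining---rather than discarding---the auxiliary mixed-volume factors, all of which are $\ge1$; the bookkeeping must be arranged so that these factors, though not bounded below by $m$, never force a second power of $m$. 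The heart of the argument, and the step I expect to be most delicate, is choosing this combination of inequalities so that the $m$-exponent increases by exactly $1$ per unit of excess removed, absorbing the genuine constant-factor loss (witnessed by the box example above) into $C(d)$ rather than into the exponent. Asymptotic sharpness then follows from the lower bound $\Vol(\Sigma(K))\ge(m+d-1)^d$ for $K=(mK_d,K_d,\dots,K_d)$ with $\Vol(K_d)=1$.
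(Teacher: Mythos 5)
Your reduction to the bound $\V(K^\alpha)\le C(d)\,m^{\max_i\alpha_i}$ is exactly the right target (it is the paper's inequality \eqref{eq:sq_pbound}), your treatment of the pure and single-pile terms via iterated Aleksandrov--Fenchel is correct, and your diagnosis of the obstruction is sound --- in fact your claimed box counterexample to the multi-index AM--GM is verifiably correct: taking $K_1,K_2,K_3,K_4$ to be ($\epsilon$-thickened) unit squares in the coordinate planes $\langle e_1,e_2\rangle$, $\langle e_3,e_4\rangle$, $\langle e_1,e_3\rangle$, $\langle e_2,e_4\rangle$ gives $\MV(K_1,K_2,K_3,K_4)=2$ while $\MV(K_1,K_1,K_2,K_2)=\MV(K_3,K_3,K_4,K_4)=4$, so $\V(2,2,0,0)\,\V(0,0,2,2)=16>4=m^2$. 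But the proof stops exactly where the theorem begins: for the general multi-pile case you describe a desideratum (``the $m$-exponent increases by exactly $1$ per unit of excess removed'') and yourself flag the realizing inequality as ``the step I expect to be most delicate,'' without producing it. Worse, the concrete mechanism you sketch --- a pairwise \eqref{AFI} trade combined with product-form inequalities, retaining auxiliary factors --- uses only consequences of \eqref{AFI}, and Proposition~\ref{prop:AF_tight} shows this provably cannot work: every bound derivable in black-box style from \eqref{AFI} is limited by the exponent $\prod_{i:\,\alpha_i>0}\alpha_i$, which for balanced multi-pile $\alpha$ (e.g. $\alpha=(2,\dots,2,0,\dots,0)$, exponent $2^{d/2}$, or $(3,\dots,3,0,\dots,0)$, exponent $3^{d/3}$) is exponentially larger than $\max_i\alpha_i$. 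So no bookkeeping of AF-type inequalities, however clever, closes the gap; a genuinely new relation is indispensable.

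The paper supplies it as the square inequality \eqref{square} (Lemma~\ref{lem:square}), which is \emph{not} a linear (black-box) consequence of \eqref{AFI} but is derived from it via a discriminant argument applied to the families $(K_1+sK_3,K_2+tK_3,K')$ and $(tK_1+sK_2,K_3,K')$. From it one gets weak log-concavity of $\MV_K$ along the directions $u_{I,j}$ with constant $2^{kl\binom{|I|}{2}}$ (Lemma~\ref{conc_dir}), and then the step inequality $\mv_K(p+u_{I,J})\le\frac{\mu+1}{\mu}\,\mv_K(p)+(\mu+1)\binom{\lfloor d/2\rfloor}{2}$ of Theorem~\ref{thm:steps}, where $\mu=\min(p_i:i\in I)$. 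Note the correct bookkeeping is multiplicative, not additive: one must traverse an \emph{admissible path} $\bm{1}=a_1,\dots,a_{p_1}=p$ in which each step increases precisely a set of currently-maximal coordinates (all equal to $i$ at step $i$), so that the factors telescope, $\frac{2}{1}\cdot\frac{3}{2}\cdots\frac{p_1}{p_1-1}=p_1=\max(p)$, while the additive errors accumulate only into the constant $C(d)$; this path combinatorics (Theorem~\ref{thm:asymp_Kound}) is the second ingredient your plan leaves unspecified. In short: your scaffolding and your negative results are right and match the paper's, but the two load-bearing components --- an inequality beyond the Aleksandrov--Fenchel cone and the telescoping induction that keeps the exponent at $\max_i\alpha_i$ --- are missing, so the proposal is an accurate research plan rather than a proof.
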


Interpreting Theorem~\ref{T:main result} in terms of the BKK theorem allows to derive the following corollary for generic system of polynomial equations: 

\begin{cor} \label{cor:systems}
	Let $f_1,\ldots,f_d \in \mathbb{C}[z_1^{\pm 1}\,\ldots,z_d^{\pm 1}]$ be generic Laurent polynomials with fixed $d$-dimensional Newton polytopes and let $m$ be the number of solutions of the system $f_1= \cdots = f_d = 0$ in the complex torus $(\mathbb{C} \setminus \{0\})^d$. Then the product $f_1 \cdots f_d$ is a Laurent polynomial containing at most $O(m^d)$ monomials, as $m \to \infty$. 
\end{cor}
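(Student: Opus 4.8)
The plan is to reduce the corollary to Theorem~\ref{T:main result} via two elementary ingredients: the fact that the Newton polytope of a product is the Minkowski sum of the factors' Newton polytopes, and a bound relating the number of lattice points in a lattice polytope to its normalized volume.

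First I would set $P_i := \operatorname{Newt}(f_i)$, so that $P=(P_1,\dots,P_d)$ is a tuple of full-dimensional lattice polytopes with $\V(P_1,\dots,P_d)=m$ by the BKK theorem. Since $\operatorname{Newt}(f_1\cdots f_d)=P_1+\cdots+P_d=\Sigma(P)$ and every monomial occurring in $f_1\cdots f_d$ has its exponent vector in $\Sigma(P)\cap\Z^d$, the number of monomials of the product is at most $|\Sigma(P)\cap\Z^d|$. It therefore suffices to bound the number of lattice points of $\Sigma(P)$ in terms of its normalized volume and to control that volume through Theorem~\ref{T:main result}.

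Next I would invoke the bound $|Q\cap\Z^d|\le \Vol(Q)+d$, valid for every $d$-dimensional lattice polytope $Q$. This follows from Ehrhart theory: writing the $h^{\ast}$-polynomial $h^{\ast}(Q;t)=\sum_{i=0}^{d} h_i^{\ast}t^i$, Stanley's nonnegativity $h_i^{\ast}\ge 0$ together with $h_0^{\ast}=1$ gives, upon evaluating the Ehrhart polynomial at $1$, the identity $|Q\cap\Z^d|=(d+1)+h_1^{\ast}$, while $\Vol(Q)=\sum_{i=0}^{d} h_i^{\ast}\ge 1+h_1^{\ast}$; subtracting yields the claim. Applying this with $Q=\Sigma(P)$ gives $|\Sigma(P)\cap\Z^d|\le \Vol(\Sigma(P))+d$.

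Finally I would check that $P$ meets the hypotheses of Theorem~\ref{T:main result}: each $P_i$ is a full-dimensional lattice polytope, so $\Vol(P_i)=d!\operatorname{vol}(P_i)\ge 1$ as the unimodular simplex is volume-minimal, and $\V(P_1,\dots,P_d)=m$. The theorem then yields $\Vol(\Sigma(P))=O(m^d)$, whence the number of monomials of $f_1\cdots f_d$ is at most $\Vol(\Sigma(P))+d=O(m^d)$. The entire analytic content of the statement is carried by Theorem~\ref{T:main result}, which I am allowed to assume; within this corollary the only nonroutine step is the lattice-point bound, and even that is a standard consequence of the $h^{\ast}$-nonnegativity (a cruder covering argument comparing $|Q\cap\Z^d|$ with $\operatorname{vol}(Q+[0,1]^d)$ would also suffice for the asymptotic claim). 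I therefore expect no genuine obstacle beyond correctly verifying the normalization hypotheses of Theorem~\ref{T:main result}.
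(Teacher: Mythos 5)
Your proof is correct and follows essentially the same route as the paper: Newton polytope of the product equals $\Sigma(P)$, the lattice-point bound $|Q \cap \Z^d| \le \Vol(Q) + d$, and then Theorem~\ref{T:main result}. The only cosmetic difference is that the paper cites this lattice-point bound as Blichfeldt's inequality, whereas you re-derive it from Ehrhart $h^{\ast}$-nonnegativity --- a valid, self-contained substitute for the citation.
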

The assertion of Corollary~\ref{cor:systems} agrees nicely with the well-known B\'ezout theorem \cite[Section~8.7]{CLO15}. Indeed, if $f_1,\dots, f_d$ are generic polynomials of total degrees $k_1,\dots, k_d$ then $m=k_1\cdots k_d$ by B\'ezout's theorem. Also the number of monomials in each $f_i$
is of order $k_i^d$. Therefore, the number of monomials in $f_1 \cdots f_d$ is of order $m^d$.

Our proof of Theorem~\ref{T:main result} uses the inequality 
\begin{equation}
	\label{square}
	\tag{$\square$}
	\MV(A,A,D) \MV(B,C,D) \le 2 \MV(A,B,D) \MV(A,C,D),\quad\text{where }\ D = (D_3,\ldots,D_d),
\end{equation}
valid for any  convex bodies $A, B, C, D_3,\ldots,D_d$ in $\R^d$. This inequality explicitly appears in \cite[Lemma 5.1]{BGL} and is derived using the same argument as in the proof of \cite[Lemma 7.4.1]{Schneider2014}. Interestingly, \eqref{square} is derived from \eqref{AFI} algebraically, but not in a black-box style. We sketch the proof of \eqref{square} in Section~\ref{S:weak}. While the estimate $\Vol(\Sigma(K)) = O(m^d)$ is obtained via a  black-box application of \eqref{square}, 
the derivation itself is non-trivial. For the asymptotic bound to be obtained, each single $\V_{i_1,\dots,i_d}$ must be estimated in terms of $m = \V_{1,\ldots,d}$, possibly tightly. This estimation task can be linked to a linear optimization problem, since by taking the logarithms (e.g., to the base $2$) of \eqref{square} we obtain the linear inequality 
\begin{equation}
	\label{log:square}
	\tag{$\log \square$}
	\log \MV(A,A,D) + \log \MV(B,C,D) \le 1 + \log \MV(A,B,D) + \log \MV(A,C,D)
\end{equation}
in the logarithms of the mixed volumes. The duality theory of linear programming tells us that the best upper bound on the terms $\mv_{i_1,\ldots,i_d} := \log \V_{i_1,\ldots,i_d} \in \R_{\ge 0}$, which can be derived from \eqref{log:square} using the black-box approach can be verified by taking a non-negative linear combination of the inequalities
\begin{align}
	\label{square:system}
	\mv_{i,i,k_3,\ldots,k_d} +  \mv_{s,t,k_3,\ldots,k_d} \le 1 +  \mv_{i,s,k_3,\ldots,k_d} + \mv_{i,t,k_3,\ldots,k_d} & & \forall \ i,s,t,k_3,\ldots,k_d \in \{1,\ldots,d\},
\end{align}
that arise by plugging  $K_1,\ldots,K_d$ into \eqref{log:square} in all possible ways, and taking into account that $\mv_{i_1,\ldots,i_d} \in \R_{\ge 0}$ and $\mv_{1,\ldots,d}= \log m$. This task might appear straightforward, because one ``only'' needs to combine \eqref{square:system} in such a way that the best possible bound on $\mv_{i_1,\ldots,i_d}$ can be confirmed. 
In fixed small dimensions we actually employed this approach using 
CPLEX \cite{cplex2015v12} as a solver for the resulting linear program. 
However, these computations also revealed that the linear combinations
of inequalities of the form \eqref{log:square} that confirm the best
possible bound are very complicated and use a vast amount of inequalities.
Furthermore, compared to \eqref{AFI}, the more complicated structure of \eqref{log:square} having the sum
of two mixed volumes as an upper bound  destroys any attempt of 
simple successive application of \eqref{log:square} similar to the application of \eqref{AFI} in Esterov's approach.
Our way of handling this complexity is to derive simpler inequalities 
with a single term on both sides (Theorem \ref{thm:asymp_Kound}).
We then show how
these inequalities can be successively applied to obtain the bound of order $O(m^d)$. 

\subsection{Exact bounds in small dimensions.} 
Esterov's approach gives an exact upper bound on $\Vol(\Sigma(K))$ in dimension two. It is not hard to check by directly applying \eqref{AFI}  with $d=2$ 
that the following holds.

\begin{prop}\label{two:maxima:in:dimension:two}
	Let $m \in \R_{\ge 1}$. Consider $2$-dimensional  convex bodies $K_1,K_2$ in $\R^2$ satisfying 
	\begin{align*}
		\Vol(K_1) \ge 1,\  \Vol(K_2) \ge 1,\quad\text{and }\ \MV(K_1,K_2) = m.
	\end{align*}
	Among all such bodies,
	\begin{enumerate}
		\item the maximum of $\Vol(K_1)$ is $m^2$ and
		\item the maximum of $\Vol(K_1 + K_2)$ is $(m+1)^2$.
	\end{enumerate}
	Both maxima are attained when $K_1 = m K_2$ and $\Vol(K_2) = 1$. 
\end{prop}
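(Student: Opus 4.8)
The plan is to prove both maxima directly from the Aleksandrov--Fenchel inequality \eqref{AFI}, which in dimension $d=2$ reduces to the classical two-dimensional case $\MV(K_1,K_2)^2 \ge \Vol(K_1)\,\Vol(K_2)$, where the empty tuple $C$ plays no role. First I would record the expansion of $\Vol(K_1+K_2)$ coming from \eqref{vol:sum:via:mv}, namely
\begin{equation*}
	\Vol(K_1+K_2) = \Vol(K_1) + 2\MV(K_1,K_2) + \Vol(K_2) = \Vol(K_1) + 2m + \Vol(K_2),
\end{equation*}
so that bounding $\Vol(K_1+K_2)$ amounts to bounding $\Vol(K_1)+\Vol(K_2)$ under the constraints $\Vol(K_1),\Vol(K_2) \ge 1$ and $\MV(K_1,K_2)=m$.

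For part (1), the inequality $\Vol(K_1)\,\Vol(K_2) \le \MV(K_1,K_2)^2 = m^2$ together with $\Vol(K_2) \ge 1$ immediately gives $\Vol(K_1) \le m^2$, and equality forces $\Vol(K_2)=1$ with the \eqref{AFI} equality case $K_1 = mK_2$ (up to translation and scaling), which indeed satisfies $\MV(mK_2,K_2)=m\Vol(K_2)=m$. For part (2), I would introduce the shorthand $a = \Vol(K_1)$, $b=\Vol(K_2)$ and maximize $a+b$ subject to $ab \le m^2$, $a,b\ge 1$. Since the feasible region is governed by the hyperbola $ab=m^2$ and the box constraints $a,b\ge 1$, and the objective $a+b$ is convex, its maximum over this region is attained at a vertex, i.e.\ where one of the box constraints is tight; setting $b=1$ forces $a \le m^2$, giving $a+b \le m^2+1$ and hence $\Vol(K_1+K_2) \le m^2 + 2m + 1 = (m+1)^2$. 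The symmetric choice $a=1$ gives the same value, and the configuration $K_1 = mK_2$, $\Vol(K_2)=1$ realizes it.

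The main obstacle, though a mild one, is justifying that the extremum of $a+b$ is really attained on the boundary piece $b=1$ (or $a=1$) rather than in the interior of the hyperbolic arc: one must check that along $ab=m^2$ with both $a,b > 1$ the sum $a+b$ is strictly smaller than at the endpoint. This is a one-variable convexity argument --- writing $a+b = a + m^2/a$ and noting this is increasing in $a$ on $[m, m^2]$ and symmetric, so it is maximized at the extreme admissible value $a=m^2$. The only subtlety worth spelling out is the sharpness claim: one must exhibit bodies attaining equality in \eqref{AFI}, for which the standard equality characterization (homothety $K_1 = m K_2$) suffices, and verify the normalization $\Vol(K_2)=1$ is consistent with $\MV(K_1,K_2)=m$. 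I would close by remarking that both maxima are attained simultaneously by the same extremal configuration, as stated.
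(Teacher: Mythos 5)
Your proposal is correct and follows exactly the route the paper intends: the paper gives no detailed argument for this proposition, remarking only that it follows by directly applying \eqref{AFI} with $d=2$, which is precisely what you carry out (Minkowski's inequality $\MV(K_1,K_2)^2 \ge \Vol(K_1)\Vol(K_2)$ plus the expansion $\Vol(K_1+K_2)=\Vol(K_1)+2m+\Vol(K_2)$ and the elementary optimization of $a+b$ under $ab\le m^2$, $a,b\ge 1$). Your one-variable convexity argument correctly repairs the slightly loose ``vertex'' claim (the region $ab\le m^2$ is not convex), so the write-up is complete as stands.
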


The inequality \eqref{AFI} is still strong enough to obtain the exact bound on $\Vol(\Sigma(K))$ in dimension three. We have the following result.

\begin{thm}
	\label{three:maxima:in:dimension:three}
	Let $m \in \R_{\ge 1}$. Consider $3$-dimensional  convex bodies $K_1,K_2,K_3 \subset \R^3$ satisfying 
	\begin{align*}
	\Vol(K_1)\ge 1,\ \Vol(K_2) \ge 1, \ \Vol(K_3) \ge 1,\quad\text{and } \ \MV(K_1,K_2,K_3)=m.
	\end{align*}
	Among all such bodies, 
	\begin{enumerate}
		\item \label{dim:3:1} the maximum of $\Vol(K_1)$ is $m^3$,
		\item \label{dim:3:2} the maximum of $\Vol(K_1 + K_2)$ is $(m+1)^3$, and
		\item \label{dim:3:3} the maximum of $\Vol(K_1 + K_2 + K_3)$ is $(m+2)^3$.
	\end{enumerate}
	All three maxima are attained when  $K_1 = m K_2 = m K_3$ and $\Vol(K_3) = 1$. 
\end{thm}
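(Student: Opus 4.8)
The plan is to prove the three maxima in dimension three using only the Aleksandrov--Fenchel inequality \eqref{AFI}, exploiting the extremal configuration $K_1 = mK_2 = mK_3$ with $\Vol(K_3)=1$ to guess the correct bounds and then verify them. First I would establish part \eqref{dim:3:1} as the base case. Writing $\MV(K_1,K_2,K_3)=m$ and applying \eqref{AFI} with $A=K_1$, $B=K_2$ (and the remaining slot being $K_3$), I would bootstrap from the constraint $\Vol(K_2)=\MV(K_2,K_2,K_2)\ge 1$ and $\Vol(K_3)\ge 1$ up to a bound on $\Vol(K_1)=\MV(K_1,K_1,K_1)$. Concretely, a chain of Aleksandrov--Fenchel applications of the form
\begin{equation*}
	\MV(K_1,K_2,K_3)^2 \ge \MV(K_1,K_1,K_3)\,\MV(K_2,K_2,K_3)
\end{equation*}
together with $\MV(K_1,K_1,K_3)^2 \ge \MV(K_1,K_1,K_1)\MV(K_1,K_1,K_3)$-type steps should let me isolate $\MV(K_1,K_1,K_1)$ and bound it by $m^3$, with equality forced when $K_1=mK_2=mK_3$ and $\Vol(K_3)=1$.

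For parts \eqref{dim:3:2} and \eqref{dim:3:3} the plan is to expand the Minkowski-sum volumes via the multilinearity identity \eqref{vol:sum:via:mv}, obtaining
\begin{equation*}
	\Vol(K_1+K_2) = \sum_{i,j,k\in\{1,2\}}\MV(K_i,K_j,K_k)
\end{equation*}
and similarly a sum of $27$ mixed volumes for $\Vol(K_1+K_2+K_3)$. I would then bound \emph{each} summand $\MV(K_{i},K_{j},K_{k})$ individually in terms of $m$ by repeated use of \eqref{AFI}, so that the sum telescopes into the binomial expansion of $(m+1)^3$ and $(m+2)^3$ respectively. The symmetry of mixed volumes collapses many of these terms, so the key is to verify that the mixed monomials of intermediate type — for instance $\MV(K_1,K_1,K_2)$, which appears with multiplicity three in the expansion of $(m+1)^3 = m^3 + 3m^2 + 3m + 1$ — are each bounded by the corresponding coefficient times the appropriate power, namely $\MV(K_1,K_1,K_2)\le m^2$. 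Each such bound again follows by a short Aleksandrov--Fenchel chain anchored at the volume-at-least-one constraints.

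The main obstacle I anticipate is not the derivation of any single inequality but \emph{the simultaneous attainment of equality} across all the summands. Because $(m+2)^3$ is a specific number, every one of the $27$ mixed volumes must hit its individual upper bound at the \emph{same} tuple $(K_1,K_2,K_3)$; I must therefore check that the Aleksandrov--Fenchel equality conditions for all the chains are mutually compatible and are all satisfied precisely by the homothetic configuration $K_1=mK_2=mK_3$ with $\Vol(K_3)=1$. This requires care because \eqref{AFI} has delicate equality characterizations (homothety holds in general only under extra hypotheses), so I would argue sharpness not by invoking the equality case of \eqref{AFI} abstractly, but by directly computing the relevant mixed volumes for the explicit homothetic tuple and confirming that the sum equals $(m+2)^3$. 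Thus the proof splits cleanly into an upper-bound half (summand-by-summand estimates via \eqref{AFI}) and a sharpness half (an explicit evaluation on the extremal bodies), with the compatibility of the two halves being the delicate point to nail down.
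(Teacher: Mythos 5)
Your strategy for parts \eqref{dim:3:2} and \eqref{dim:3:3} has a genuine gap: the summand-by-summand bounds you need are false. For part \eqref{dim:3:2} you expand
\begin{equation*}
\Vol(K_1+K_2) \;=\; \Vol(K_1) + 3\MV(K_1,K_1,K_2) + 3\MV(K_1,K_2,K_2) + \Vol(K_2)
\end{equation*}
and want each term bounded by the matching term of $(m+1)^3 = m^3+3m^2+3m+1$, i.e.\ $\Vol(K_1)\le m^3$, $\MV(K_1,K_1,K_2)\le m^2$, $\MV(K_1,K_2,K_2)\le m$, and $\Vol(K_2)\le 1$. The first two are true, but the last two are not: taking $K_1=K_3=A$ with $\Vol(A)=1$ and $K_2=mA$ gives $\MV(K_1,K_2,K_3)=m$ while $\MV(K_1,K_2,K_2)=m^2$ and $\Vol(K_2)=m^3$. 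The only individually valid (and sharp) bounds are the symmetric ones, $\MV(K_1,K_2,K_2)\le m^2$ and $\Vol(K_2)\le m^3$, and summing those gives $2m^3+6m^2$, far above $(m+1)^3$. So the difficulty is not, as you anticipate, that one must check compatibility of the equality cases of many sharp individual estimates; it is that \emph{no} collection of valid individual estimates can be simultaneously sharp, because the constraint $\MV(K_1,K_2,K_3)=m$ couples the mixed volumes — whichever body you inflate to make one summand large forces other summands to shrink. Any proof of the exact constants $(m+1)^3$ and $(m+2)^3$ must maximize the whole sum at once, not term by term; this is precisely why a black-box, chain-style use of \eqref{AFI} on single mixed volumes cannot close the argument.

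The paper's proof is built around exactly this point. Since $\mv_K$ satisfies all linearized Aleksandrov--Fenchel inequalities, one has $\mv_K \in (\log m)\AF_3$, and $\Vol(K_1+\cdots+K_\ell)$ is a \emph{convex} function of $\mv_K$ (a positive combination of exponentials $2^{\mv_K(p)}$). Hence its maximum over the polytope $(\log m)\AF_3$ is attained at a vertex. The paper computes the $24$ vertices of $\AF_3$ (Proposition~\ref{prop:af_vertices}, computer-assisted), reduces to seven $S_3$-orbits, and checks that the extremal vertex $v_6$ yields exactly $(m+1)^3$, resp.\ $(m+2)^3$ — a global maximization over all mixed volumes jointly, which cannot be decomposed into separate estimates. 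Your part \eqref{dim:3:1} sketch is essentially sound: $\Vol(K_1)\le m^3$ is the inequality $\MV_K(\bm{1})^3 \ge \Vol(K_1)\Vol(K_2)\Vol(K_3)$ of Remark~\ref{R:single-volume-bound}, obtainable by a short chain of \eqref{AFI}; and your sharpness half (direct evaluation on the homothetic tuple) matches what the paper does. But for parts \eqref{dim:3:2} and \eqref{dim:3:3} the term-by-term scheme must be replaced by a genuinely global argument such as the vertex enumeration above.
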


Theorem~\ref{three:maxima:in:dimension:three} is obtained using a computer assisted approach, which involved the linearization of \eqref{AFI}, produced in the same way as the linearization \eqref{log:square} of \eqref{square}  above. 

Based on the above evidence and the asymptotic behavior of $\Vol(\Sigma(P))$ presented in Theorem~\ref{T:main result} we propose the following conjecture.

\begin{conj}\label{conj}
Among all  convex bodies $K_1,\ldots,K_d$ in $\R^d$ satisfying
	\begin{align*}
		\Vol(K_1) \ge 1, \ldots, \Vol(K_d) \ge 1,\quad\text{and }\ \V(K_1,\ldots,K_d) = m,
	\end{align*}
for any $1\leq\ell\leq d$, 
the maximum of $\Vol(K_1 + \cdots + K_\ell)$ equals $(m+\ell-1)^d$ and is attained when $K_1 = mK_2 = \cdots = mK_d$ with $\Vol(K_d) = 1$.
\end{conj}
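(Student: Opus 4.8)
The plan is to separate the two directions. For the lower bound (achievability) I would first reduce to the family of mutually homothetic bodies: fix a convex body $U$ with $\Vol(U)=1$ and set $K_i = t_i U$ with $t_i \ge 1$. Then $\V(K_1,\ldots,K_d)=t_1\cdots t_d$ and $\Vol(K_1+\cdots+K_\ell) = (t_1+\cdots+t_\ell)^d$, so the problem becomes the elementary one of maximizing $t_1+\cdots+t_\ell$ subject to $t_1\cdots t_d = m$ and $t_i\ge 1$. Since for a fixed product the sum of numbers $\ge 1$ is largest when they are as unequal as possible, the optimum is $t_1=m$, $t_2=\cdots=t_d=1$, giving the value $(m+\ell-1)^d$ and exhibiting the claimed extremizer $K_1=mK_2=\cdots=mK_d$ with $\Vol(K_d)=1$. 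This proves $\max \ge (m+\ell-1)^d$.

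For the upper bound I would expand the Minkowski sum by multilinearity of mixed volumes,
\[
	\Vol(K_1+\cdots+K_\ell) = \sum_{a_1+\cdots+a_\ell=d} \binom{d}{a_1,\ldots,a_\ell}\, \V(K_1^{a_1},\ldots,K_\ell^{a_\ell}),
\]
and try to bound this weighted sum by $(m+\ell-1)^d = \sum_{k=0}^d \binom{d}{k}(\ell-1)^{d-k}m^k$. The case $\ell=1$ is a clean anchor: iterating \eqref{AFI} yields the generalized Aleksandrov--Fenchel inequality $\V(K_1,\ldots,K_d)^d \ge \prod_{i=1}^d \Vol(K_i)$, whence $\Vol(K_1) \le m^d / \prod_{i\ge 2}\Vol(K_i) \le m^d$ using $\Vol(K_i)\ge 1$. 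The same mechanism bounds a single off-diagonal term: since every mixed volume of the $K_i$ is $\ge 1$ (again by the generalized inequality together with the volume constraints), balancing exponents against $\V(K_1,\ldots,K_d)=m$ via repeated use of \eqref{AFI} gives the per-term estimate $\V(K_1^{a_1},\ldots,K_d^{a_d}) \le m^{\max_i a_i}$.

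The hard part is that these per-term bounds cannot simply be summed: $\sum_a \binom{d}{a} m^{\max_i a_i}$ overshoots $(m+\ell-1)^d$ drastically, since already the $\ell$ diagonal terms contribute $\ell\, m^d$ rather than the single $m^d$ present in the extremizer. The reason is that the bound $m^{\max_i a_i}$ is attained by a \emph{different} configuration for each multi-index $a$ — the one dilating the body that carries the largest exponent — and these optima are mutually exclusive. In the genuine extremizer exactly one body is large, so $\V(K_1^{a_1},\ldots)$ equals $m^{a_1}$ rather than $m^{\max_i a_i}$, and only then does the binomial sum telescope to $(m+\ell-1)^d$. A correct proof must therefore control all terms simultaneously, charging the ``largeness'' consistently to a single body, instead of optimizing each term in isolation. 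I would attempt this either by induction on $\ell$, writing $K_1+\cdots+K_\ell=(K_1+\cdots+K_{\ell-1})+K_\ell$ and bounding the cross terms $\V\bigl((K_1+\cdots+K_{\ell-1})^{d-k},K_\ell^{k}\bigr)$, or by searching for an explicit nonnegative combination of the inequalities \eqref{square:system} certifying the bound, as was done by computer for $d\le 3$ in Theorem~\ref{three:maxima:in:dimension:three}.

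I expect this global reconciliation to be the principal obstacle, and it is delicate for a structural reason: the Brunn--Minkowski inequality pushes the opposite way, since for \emph{fixed} individual volumes the homothetic bodies \emph{minimize} $\Vol(K_1+\cdots+K_\ell)$, so non-homothetic shapes would enlarge the sum. The extremality of homothets is thus forced entirely by the interaction of \eqref{AFI} (equivalently \eqref{square}) with the single constraint $\V(K_1,\ldots,K_d)=m$, a balance that is not captured by any one of the classical inequalities alone. The low-dimensional certificates found by linear programming are already intricate and exhibit no evident pattern, so the crux is to produce a uniform argument in $d$ and $\ell$ — or a single new inequality that bounds the weighted sum directly — that reproduces the sharp constant $(m+\ell-1)^d$.
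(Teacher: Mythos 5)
You should first note that the statement you are trying to prove is Conjecture~\ref{conj}: the paper does not prove it, and explicitly records it as open except for $\ell=1$ (Remark~\ref{R:single-volume-bound}) and $d\le 3$ (Proposition~\ref{two:maxima:in:dimension:two} and Theorem~\ref{three:maxima:in:dimension:three}); in general dimension only the asymptotic version $O(m^d)$ is established (Theorem~\ref{thm:asymp_Kound}). Your proposal proves exactly the easy half: the achievability computation with $K_1=mK_2=\cdots=mK_d$, $\Vol(K_d)=1$, giving $\Vol(K_1+\cdots+K_\ell)=(m+\ell-1)^d$, is correct (and for this no ``reduction to homothets'' is needed --- exhibiting the one configuration suffices). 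The upper bound, which is the entire content of the conjecture, is not proved in your proposal, as you yourself acknowledge; so the proposal has a genuine gap, namely the open problem itself.

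Beyond the honest incompleteness, one intermediate claim is concretely wrong. You assert that ``balancing exponents against $\V(K_1,\ldots,K_d)=m$ via repeated use of \eqref{AFI} gives the per-term estimate $\V(K_1^{a_1},\ldots,K_d^{a_d})\le m^{\max_i a_i}$.'' This cannot be derived from \eqref{AFI} in black-box fashion: Theorem~\ref{thm:af_Kounds} together with Proposition~\ref{prop:AF_tight} shows that the best exponent obtainable that way is $\prod_{i\,:\,a_i>0}a_i$, not $\max_i a_i$ (for instance, for $(2,2,2,0,0,0)$ in $d=6$ one gets $m^8$, not $m^2$, and this is tight on the Aleksandrov--Fenchel cone). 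Obtaining the exponent $\max_i a_i$ is precisely what requires the square inequalities \eqref{square}, and even then Theorem~\ref{thm:asymp_Kound} only yields $\V_K(p)\le 2^{C(d)}m^{\max(p)}$ with a dimensional constant $2^{C(d)}>1$, which destroys any hope of recovering the sharp constant $(m+\ell-1)^d$ by summing per-term bounds --- consistent with your own (correct) observation that term-by-term optimization is mutually exclusive across multi-indices. If you want to push further, note that the paper's $d=3$ proof does not proceed via per-term bounds at all, but by maximizing the convex function $\mv\mapsto\sum_p\binom{d}{p}m^{\mv(p)}$ over the vertices of the polytope $\AF_3$, and that Proposition~\ref{prop:specific} (optimal tuples have $\Vol(K_i)=1$ except for at most one index, and $K_{\ell+1}=\cdots=K_d$ when $\ell<d-1$) is a better structural starting point than the induction on $\ell$ you sketch.
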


We know that the conjecture is true for $\ell=1$ (Remark~\ref{R:single-volume-bound}) and for $d \le 3$ (Proposition~\ref{two:maxima:in:dimension:two} and Theorem~\ref{three:maxima:in:dimension:three}). All other cases are open.

\subsection{Organization of the paper}

In Section~\ref{sec:prelim} we introduce basic preliminary definitions and results and fix notation. 
Section~\ref{sec:asymptotics} is devoted to employing the Aleksandrov-Fenchel inequalities to prove an upper bound 
for the volume of the Minkowski sum of a tuple of fixed mixed volume in general dimension.
Furthermore, we show that the relations providing this bound are best possible if we 
consider only the Aleksandrov-Fenchel inequalities. In Section~\ref{S:weak} we use additional 
inequalities between mixed volumes in order to prove a stronger bound
 on the volume of the
Minkowski sum that is asymptotically sharp.
In Section~\ref{sec:dim:3} we shift our attention from asymptotic bounds in 
general dimension to proving the exact bound in dimension $3$.
Finally, Section~\ref{sec:remarks} contains a result that allows to simplify Conjecture~\ref{conj} in the case $\ell < d$ and discusses open questions about the relations between mixed volumes for compact convex sets which  were motivated by the work on this project.

\subsection*{Acknowledgements} We are grateful to
Tobias Boege for helpful discussions regarding the combinatorial step of 
the proof of Theorem~\ref{thm:asymp_Kound}. The two first authors
and a research visit of the third author
were funded by the Deutsche Forschungsgemeinschaft (DFG, German Research Foundation) - 314838170, GRK 2297 MathCoRe.

\section{Preliminaries} \label{sec:prelim}

For $d \in \Z_{>0}$, let $[d]:=\{1,\ldots,d\}$. We use $e_1,\ldots,e_d$ to denote the standard basis vectors in $\R^d$. 
Consider a convex body $K\subset\R^d$, i.e. a compact convex set with positive $d$-dimensional volume. 
Let $\Vol(K)$ denote the normalized volume of $K$, that is  the usual Euclidean volume rescaled by a factor of $d!$. In particular, the normalized volume of the standard simplex $\conv\{0,e_1,\dots,e_d\}$  equals $1$. Recall that the {\it Minkowski sum} of two sets $A,B$ in $\R^d$ is the vector sum $$A+B=\{a+b : a\in A, b\in B\}.$$ Let $K_1,\dots, K_d$ be compact convex sets in $\R^d$. 
The (normalized) {\it mixed volume} $\MV(K_1,\dots,K_d)$
is the unique function in $K_1,\dots, K_d$ which is symmetric, multilinear with respect to Minkowski addition, and which coincides with the normalized volume on the diagonal, i.e. 
$$\MV(K,\dots, K)=\Vol(K)$$
for any compact convex set $K\subset \R^d$. Here is an explicit expression for the mixed volume \cite[Section 5.1]{Schneider2014}:
$$\MV(K_1,\dots,K_d)=\frac{1}{d!}\sum_{n=1}^d(-1)^{d+n}\!\sum_{i_1<\dots<i_n}\Vol(K_{i_1}+\dots+K_{i_n}).$$

Let us denote by $\Km_d$ the set of all convex bodies in $\R^d$ and let $\Km_{d,1} \subset \Km_d$ denote the subset of those 
convex bodies, whose normalized volume is at least $1$.

Fix $n > 0$  and a family $\Km$ of compact convex sets in $\R^d$, and consider an ordered $n$-tuple $K=(K_1,\dots, K_n)$ of elements in $\Km$. It defines a collection of $n^d$  mixed volumes
\[
\Big( \MV(K_{i_1},\ldots,K_{i_d}) : i_1,\dots, i_d\in\{1,\dots,n\}\Big).
\]
Since the mixed volume $\MV(K_{i_1},\ldots,K_{i_d})$ is invariant under permutation of the indices, we introduce an alternative notation 
\[
\MV_K(p_1,\ldots,p_n) = \MV( \underbrace{K_1,\ldots,K_1}_{p_1},\ldots,\underbrace{K_n,\ldots,K_n}_{p_n}).
\] 
In this notation, the \emph{mixed volume configuration} of an $n$-tuple $K=(K_1,\dots, K_n)$ is the vector 
\begin{equation}\label{e:m-config}
\Bigl (\V_K(p) \Bigr)_{p \in \Delta_{n,d}} \in \R_{\ge 0}^{\Delta_{n,d}}
\end{equation}
indexed by the set
\begin{equation}\label{e:simplex}
\Delta_{n,d} := \setcond{ (x_1,\ldots,x_n) \in \Z_{\ge 0}^n}{x_1 + \cdots + x_n = d}.
\end{equation}
For example, in the case $d=3$, $n=2$, the mixed volume configuration of a pair $K=(K_1,K_2)$ of $3$-dimensional convex bodies consists of the following four mixed volumes:
\begin{align*}
\V_K(3,0) & =  \MV(K_1,K_1,K_1) = \Vol(K_1), 
\\ \V_K(2,1) & =  \MV(K_1,K_1,K_2), 
\\ \V_K(1,2) & = \MV(K_1,K_2,K_2), 
\\ \V_K(0,3) & = \MV(K_2,K_2,K_2)  = \Vol(K_2).
\end{align*}

Furthermore, given a family of compact convex sets $\Km$, 
we define the {\it mixed volume configuration space}
\begin{equation}\label{e:m-config-space}
\MV(\Km,\Delta_{n,d}) := \setcond{ \Bigl(\V_K(p) \Bigr)_{p \in \Delta_{n,d}} }{K \in \Km^n},
\end{equation}
which represents all possible sets of values of the
different mixed volumes indexed by $p \in \Delta_{n,d}$ built for convex sets 
from $\Km$. When all sets from $\Km$ are full-dimensional, we also introduce the \emph{logarithmic mixed volume configuration space} 
\begin{equation}\label{e:m-log-config-space}
\mv(\Km,\Delta_{n,d}) := \setcond{ \Bigl (\mv_K(p) \Bigr)_{p \in \Delta_{n,d}} }{K \in \Km^n},\quad \text{where }\
\mv_K(p) := \log \V_K(p).
\end{equation}
Here and throughout the paper $\log$ denotes the logarithm to base 2.

Recall that, given an $n$-tuple $K = (K_1,\dots,K_n) \in (\Km_d)^n$, we
denote by $\Sigma(K) \coloneqq K_1 + \dots + K_n \in \Km_d$ the {Minkowski sum} over its elements and by $\V_K\in \R^{\Delta_{n,d}}$
the mixed volume configuration corresponding to $K$, see (\ref{e:m-config}).

In what follows, we have two points of view for the elements of $\R^{\Delta_{n,d}}$. On  one hand, $\R^{\Delta_{n,d}}$ is a vector space over $\R$ and we can treat its elements merely as vectors of $\R^N$ with $N = |\Delta_{n,d}|$. On the other hand, 
since $\Delta_{n,d}$ is a subset of $\R^d$, we can talk about elements 
of $\R^{\Delta_{n,d}}$ as functions on $\Delta_{n,d}$ which may or may not possess some discrete concavity properties. 
Because of this, we will call the elements of $\R^{\Delta_{n,d}}$ functions or vectors depending on the context.

The following proposition, which immediately follows from the multilinearity of the mixed volume, relates 
the volume of the Minkowski sum $\Sigma(K)$ to the mixed volume configuration of $K$.

\begin{prop}
\label{prop:formula_mink}
Let $K \in (\Km_d)^n$. Then we have the following formula for the
volume of the Minkowski sum of the elements in $K$:
\begin{align} \label{vol:sigma:eq}
\Vol(\Sigma(K)) = \sum_{p \in \Delta_{n,d}} 
\binom{d}{p} \MV_K(p),
\end{align}   
where $\binom{d}{p}=\frac{d!}{p_1!\cdots p_n!}$ denotes the multinomial coefficient for $p=(p_1,\dots,p_n)$.
\end{prop}

Qualitatively, Proposition~\ref{prop:formula_mink} shows that $\Vol(\Sigma(K))$ is a linear function of $\MV_K \in \R^{\Delta_{n,d}}$. Since the convex bodies in the tuple $K$ have positive volume, it implies that $\MV_K \in \R_{>0}^{\Delta_{n,d}}$ (see, for example, Lemma~\ref{lem:concavity_simplices} below). 
Hence, we can use the logarithmic mixed volume configuration $\mv_K \in \R^{\Delta_{n,d}}$ and reformulate \eqref{vol:sigma:eq} as 
\[
	\Vol(\Sigma(K)) = \sum_{p \in \Delta_{n,d}}\binom{d}{p}  2^{\mv_K(p)}.
\]
This shows that $\Vol(\Sigma(K))$ is a convex function of $\mv_K$. When the convex bodies in the tuple $K$ have volume
at least 1, Lemma~\ref{lem:concavity_simplices} implies that $\MV_K \in \R_{\geq 1}^{\Delta_{n,d}}$ and, consequently, $\mv_K \in \R_{\geq 0}^{\Delta_{n,d}}$.

Below we restate the Aleksandrov-Fenchel inequalities \eqref{AFI} in
the $\MV_K(p)$-notation introduced in Section~\ref{S:relations}.

\begin{thm}[Aleksandrov-Fenchel Inequalities]
\label{thm:af}
Let $i,j \in [d]$ with $i \neq j$ and $p = (p_1,\ldots,p_d) \in \Delta_{d,d}$ a point satisfying 
$p_i,p_j \geq 1$. Then, for every $d$-tuple $K$ of $d$-dimensional convex bodies in $\R^d$, one has 
\begin{align}\label{e:AF}
\MV_K(p)^2 \geq \MV_K(p+e_i-e_j) \MV_K(p-e_i+e_j).
\end{align}
Equivalently, in the $\log$-notation, one has 
\begin{align}
\label{e:af}
\tag{$\log$ AF}
2\mv_K(p) \geq \mv_K(p+e_i-e_j)+\mv_K(p-e_i+e_j).
\end{align}
\end{thm}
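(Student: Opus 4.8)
The plan is to recognize that this statement is nothing more than the classical Aleksandrov--Fenchel inequality \eqref{AFI} rewritten in the multi-index notation $\MV_K(p)$. Accordingly, the entire content of the proof is a careful translation between the two notations, and no new analytic input beyond the already-quoted \eqref{AFI} is needed.

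First I would fix $i \neq j$ in $[d]$ and a point $p \in \Delta_{d,d}$ with $p_i, p_j \geq 1$, and isolate one copy of $K_i$ and one copy of $K_j$ out of the mixed volume $\MV_K(p)$. Concretely, I set $A := K_i$, $B := K_j$, and let $C = (C_3,\ldots,C_d)$ be the tuple listing each remaining body with the leftover multiplicity: $K_i$ appears $p_i - 1$ times, $K_j$ appears $p_j - 1$ times, and $K_\ell$ appears $p_\ell$ times for $\ell \notin \{i,j\}$. The hypothesis $p_i, p_j \geq 1$ guarantees these multiplicities are nonnegative, and the length of $C$ is $(p_i-1) + (p_j-1) + \sum_{\ell \neq i,j} p_\ell = d-2$, exactly as \eqref{AFI} requires.

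Next I would check that the three mixed volumes in \eqref{e:AF} match the three terms of \eqref{AFI} under this substitution. By definition of the notation, $\MV_K(p) = \MV(A,B,C)$. Adjoining a second copy of $A = K_i$ to $C$ raises the multiplicity of $K_i$ from $p_i - 1$ to $p_i + 1$ while leaving that of $K_j$ at $p_j - 1$, so $\MV(A,A,C) = \MV_K(p + e_i - e_j)$; symmetrically $\MV(B,B,C) = \MV_K(p - e_i + e_j)$. Plugging these identities into \eqref{AFI} gives \eqref{e:AF} at once. For the logarithmic form, I would recall that the bodies are full-dimensional, so every mixed volume occurring here is strictly positive (as recorded after Proposition~\ref{prop:formula_mink} via Lemma~\ref{lem:concavity_simplices}); applying $\log$ to base $2$ to the multiplicative inequality \eqref{e:AF} and using monotonicity of $\log$ then yields $2\mv_K(p) \geq \mv_K(p+e_i-e_j) + \mv_K(p-e_i+e_j)$, which is \eqref{e:af}.

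Since \eqref{AFI} is invoked as a known theorem, there is no genuine analytic obstacle. The one place demanding care is the index bookkeeping: confirming that extracting exactly one copy of $K_i$ and one of $K_j$ and collecting the rest into $C$ reproduces precisely the configurations $p + e_i - e_j$ and $p - e_i + e_j$. This is the step I would verify most carefully, because an off-by-one error in the leftover multiplicities would misidentify the two factors on the right-hand side of \eqref{e:AF}.
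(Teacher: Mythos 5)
Your proposal is correct and matches the paper's treatment: the paper states Theorem~\ref{thm:af} as a mere restatement of \eqref{AFI} in the $\MV_K(p)$-notation, with no further argument needed, and your index bookkeeping (taking $A=K_i$, $B=K_j$, and $C$ with leftover multiplicities $p_i-1$, $p_j-1$, $p_\ell$) carries out exactly that translation, including the correct justification of positivity for passing to logarithms.
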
 

Recall that a sequence $r_0,r_1\dots, r_n$ of non-negative real numbers is {\it log-concave} if 
$r_i^2 \geq r_{i-1}r_{i+1}$ holds for all $0 < i < n$. Furthermore, a sequence $r_0,\ldots,r_n$ of arbitrary real numbers is called {\it concave} if $2r_i \geq r_{i-1}+r_{i+1}$ for all $0 < i < n$.
In this terminology, \eqref{e:AF} is the discrete log-concavity property of the function $\MV_K\in \R^{\Delta_{d,d}}$ along the direction
$e_i-e_j$ for every $i,j \in [d]$ and $i \ne j$. Equivalently, \eqref{e:af} describes the concavity of  $\mv_K \in \R^{\Delta_{d,d}}$  in the direction $e_i -e_j$. See also Fig.~\ref{fig:config:3:3} for an illustration in the case $d=3$.

\begin{figure}
\begin{center}
	\begin{tikzpicture}[scale=1.4]
	\draw[red,very thick] (0.0,0.0) -- (3.0,5.196152422706632);
	\draw[red, very thick] (0.0,0.0) -- (6.0,0.0);
	\draw (0.0,0.0) -- (0.0,0.0);
	\draw[cyan, very thick] (2.0,0.0) -- (4.0,3.464101615137755);
	\draw[cyan, very thick] (1.0,1.7320508075688774) -- (5.0,1.7320508075688774);
	\draw (2.0,0.0) -- (1.0,1.7320508075688774);
	\draw (4.0,0.0) -- (5.0,1.7320508075688774);
	\draw (2.0,3.464101615137755) -- (4.0,3.464101615137755);
	\draw[cyan, very thick] (4.0,0.0) -- (2.0,3.464101615137755);
	\draw (6.0,0.0) -- (6.0,0.0);
	\draw (3.0,5.196152422706632) -- (3.0,5.196152422706632);
	\draw[red,very thick] (6.0,0.0) -- (3.0,5.196152422706632);
	\filldraw[fill=white] (0.0,0.0) circle (0.1);
	\node[above left] (0-0-3) at (0.0,0.0)  {\small $(0,0,3)\;\;$};
	\filldraw[fill=white] (1.0,1.7320508075688774) circle (0.1);
	\node[above left] (0-1-2) at (1.0,1.7320508075688774)  {\small $(0,1,2)\;\;$};
	\filldraw[fill=white] (2.0,3.464101615137755) circle (0.1);
	\node[above left] (0-2-1) at (2.0,3.464101615137755)  {\small $(0,2,1)\;\;$};
	\filldraw[fill=white] (3.0,5.196152422706632) circle (0.1);
	\node[above left] (0-3-0) at (3.0,5.196152422706632)  {\small $(0,3,0)\;\;$};
	\filldraw[fill=white] (2.0,0.0) circle (0.1);
	\node[above left] (1-0-2) at (2.0,0.0)  {\small $(1,0,2)\;\;$};
	\filldraw[fill=white] (3.0,1.7320508075688774) circle (0.1);
	\node[above left] (1-1-1) at (3.0,1.7320508075688774)  {\small $(1,1,1)\;\;$};
	\filldraw[fill=white] (4.0,3.464101615137755) circle (0.1);
	\node[above left] (1-2-0) at (4.0,3.464101615137755)  {\small $(1,2,0)\;\;$};
	\filldraw[fill=white] (4.0,0.0) circle (0.1);
	\node[above left] (2-0-1) at (4.0,0.0)  {\small $(2,0,1)\;\;$};
	\filldraw[fill=white] (5.0,1.7320508075688774) circle (0.1);
	\node[above left] (2-1-0) at (5.0,1.7320508075688774)  {\small $(2,1,0)\;\;$};
	\filldraw[fill=white] (6.0,0.0) circle (0.1);
	\node[above left] (3-0-0) at (6.0,0.0)  {\small $(3,0,0)\;\;$};
	\end{tikzpicture}
\end{center}
\caption{\label{fig:config:3:3}
Illustration of \eqref{e:af} for $d=3$. Elements of $\R^{\Delta_{3,3}}$ are real-valued functions on the ten lattice points of the triangle with the vertices $3 e_1, 3 e_2, 3 e_3$. The restriction of $\mv_K \in \R^{\Delta_{3,3}}$ to any of the red and any of the cyan segments generates a concave sequence.}
\end{figure}
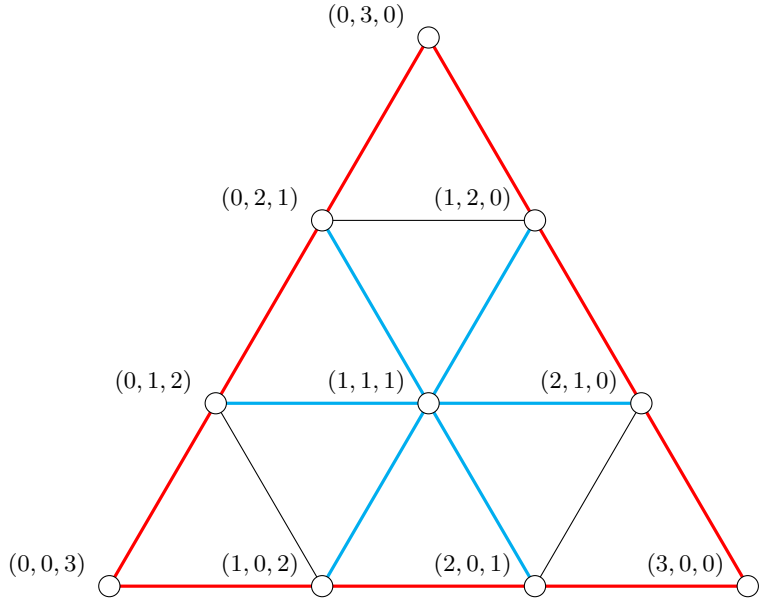

Concave and log-concave sequences are well studied in convex analysis and combinatorics. In Section~\ref{S:weak} we 
will work with relations of the more general type
\begin{align} \label{eq:weak:conc}
	\frac{1}{2} r_{i-1}+ \frac{1}{2} r_{i+1}\leq  r_i + C
\end{align}
that depend on a constant $C \ge 0$. We informally refer to 
inequalities of the form \eqref{eq:weak:conc} as \emph{weak concavity relations}. 
In the following lemma we include basic properties of sequences satisfying 
such weak concavity relations, 
which mimic basic properties of concave sequences.
 
\begin{lem}
\label{lem:concavity_along_line}
Let $r_0,r_1\dots,r_n$ be a sequence of non-negative real numbers satisfying \eqref{eq:weak:conc} for all $0 < i < n$ for some constant $C \ge 0$. Then 
\begin{align*}
\text{\rm (i) }\ & \frac{1}{2} r_{i-1}+ \frac{1}{2}r_{j+1}\leq 
\frac{1}{2} r_i+ \frac{1}{2} r_j + (j-i+1)C\ \text{ for all }\ 0< i\leq j<n,\\
\text{\rm (ii) }\ & \frac{n-1}{n} r_0+ \frac{1}{n} r_n\leq r_1 + ({n-1})C,\\
\text{\rm (iii) }\ & \frac{n-k}{n} r_0+ \frac{k}{n} r_n\leq r_k + k(n-k)C\ \text{ for all }\ 1\leq k\leq n.
\end{align*}
\end{lem}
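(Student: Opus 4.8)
The plan is to reduce all three statements to standard facts about genuinely concave sequences by ``straightening'' the weak concavity relation with a quadratic correction. Concretely, I would set $\tilde r_i := r_i - C i^2$ for $0 \le i \le n$ and check that $\tilde r_0,\dots,\tilde r_n$ is concave in the usual sense, i.e. $2\tilde r_i \ge \tilde r_{i-1} + \tilde r_{i+1}$ for $0 < i < n$. This is a one-line computation: since $(i-1)^2 + (i+1)^2 = 2i^2 + 2$, the second difference of the quadratic term contributes exactly $-2C$, which cancels the slack $2C$ coming from \eqref{eq:weak:conc}. Once this is established, each assertion becomes a known property of concave sequences applied to $\tilde r$, after substituting back $r_i = \tilde r_i + C i^2$.

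For (i), I would use that concavity of $\tilde r$ is equivalent to the differences $\tilde d_i := \tilde r_i - \tilde r_{i-1}$ being non-increasing, so $\tilde d_{j+1} \le \tilde d_i$ whenever $i \le j$. Writing $d_i := r_i - r_{i-1}$ and using $\tilde d_i = d_i - C(2i-1)$, this inequality rearranges directly into $d_{j+1} \le d_i + 2(j-i+1)C$, which is exactly (i) after dividing by $2$. Equivalently, and without the substitution, (i) is just the telescoped form of the one-step bound $d_{i+1} \le d_i + 2C$ obtained by rewriting \eqref{eq:weak:conc}.

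For (iii), I would invoke the discrete secant inequality for concave sequences: a concave sequence lies on or above the chord through its endpoints, that is $\tilde r_k \ge \tfrac{n-k}{n}\tilde r_0 + \tfrac{k}{n}\tilde r_n$ for $0 \le k \le n$ (which itself follows in one line from comparing the average of the first $k$ of the non-increasing differences $\tilde d_i$ with their overall average). Substituting $\tilde r_i = r_i - C i^2$ and simplifying the quadratic terms, where the $C$-contribution collapses to $C k(n-k)$, yields $\tfrac{n-k}{n} r_0 + \tfrac{k}{n} r_n \le r_k + k(n-k)C$, which is (iii); statement (ii) is then the special case $k=1$. The only genuine obstacle here is bookkeeping: verifying the quadratic cancellations and keeping track of index ranges so that the one-step relation is used only for $0 < i < n$, where \eqref{eq:weak:conc} is assumed to hold. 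I expect no real difficulty beyond this routine algebra.
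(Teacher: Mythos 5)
Your proposal is correct, and it takes a genuinely different route from the paper's proof. The paper proves (i) exactly as in your parenthetical remark (summing the one-step relations so they telescope), proves (ii) by a weighted sum of the relations with weights $n-i$, and then proves (iii) by an induction on $k$ that combines (i), (ii) and the inductive hypothesis applied to the subsequences $r_1,\dots,r_n$ and $r_k,\dots,r_n$, with carefully chosen multipliers $n-1$, $n-k$, $n-k-1$. Your quadratic straightening $\tilde r_i := r_i - Ci^2$ bypasses that induction entirely: since the second difference of $Ci^2$ is exactly $2C$, the sequence $\tilde r$ is genuinely concave, and (i) and (iii) become the standard facts that a concave sequence has non-increasing differences and lies above its chords, with the stated constants $(j-i+1)C$ and $k(n-k)C$ emerging automatically from the quadratic bookkeeping (which you computed correctly: the chord of $Ci^2$ through $i=0$ and $i=n$ exceeds $Ck^2$ by exactly $Ck(n-k)$). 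Your approach is shorter and explains conceptually \emph{why} the constant $k(n-k)C$ is the right one; it also makes clear that non-negativity of the $r_i$ is never used. What the paper's explicit linear-combination style buys instead is transparency for Remark~\ref{rem:AFP}: the paper later needs that these bounds are obtained by iterated non-negative linear combinations of the linearized Aleksandrov--Fenchel inequalities, hence hold for every point of the cone $\AFC_d$, and its proof exhibits those combinations directly. Your derivation yields the same linear inequalities (an affine substitution does not change that), but one would have to unwind it to see the combination explicitly.
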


\begin{proof}

(i) This follows by adding (and simplifying) the inequalities
$\frac{1}{2} r_{k-1}+\frac{1}{2}r_{k+1}\leq r_k + C$ for $i \leq k \leq j$.

\smallskip

\noindent(ii) For every $0< i< n$ we have $\frac{n-i}{2}r_{i-1}+
\frac{n-i}{2} r_{i+1} \leq (n-i)r_i + (n-i)C$. Adding these inequalities and simplifying we obtain the required inequality. 

\smallskip

\noindent(iii) We use induction on $k$. For $k=1$ this is the statement of part (ii). Assume 
$$\frac{n-k}{n} r_0+ \frac{k}{n} r_n\leq r_k + k(n-k)C.$$ 
Applying this to the sequence ${r_1,\dots, r_n}$ we get
\begin{equation}\label{e:first}
\frac{n-k-1}{n-1} r_1+ \frac{k}{n-1} r_n\leq r_{k+1} + k(n-1-k)C.
\end{equation}
Applying part (ii) to the sequence ${r_k,\dots, r_n}$ we get
\begin{equation}\label{e:second}
\frac{n-k-1}{n-k}r_k+ \frac{1}{n-k} r_n\leq r_{k+1} + (n-k-1)C.
\end{equation}
From part (i) we have
\begin{equation}\label{e:third}
r_0+r_{k+1}\leq r_1+r_{k} + 2kC.
\end{equation}
Finally, multiplying \eqref{e:first} by $n-1$, \eqref{e:second} by $n-k$, and 
\eqref{e:third} by $n-k-1$ and adding the results
we obtain $$(n-k-1)r_0+(k+1)r_n\leq nr_{k+1}+n(k+1)(n-k-1)C,$$ as required.
\end{proof}

\begin{rem}\label{r:restate} 
It is convenient to restate (iii) in Lemma \ref{lem:concavity_along_line} in a more symmetric form:
\begin{equation}\label{e:restate-iii}
\frac{k}{k+l} r_{p-l}+ \frac{l}{k+l} r_{p+k}\leq r_p + klC,
\end{equation}
for any $0\leq l \leq p$ and $0\leq k\leq n-p$.
\end{rem}

\section{The asymptotics derived from the Aleksandrov-Fenchel inequalities} \label{sec:asymptotics}

The goal of this section is to investigate the relations among mixed volumes
that follow from the Aleksandrov-Fenchel inequalities in the black box fashion
and to study the sharpness of such relations. 

\subsection{Relations and bounds coming from Aleksandrov-Fenchel inequalities}

%
%


The following lemma shows how Aleksandrov-Fenchel inequalities yield certain
higher-order $\log$-concavity relations on the function 
$\MV_K \in \R^{\Delta_{d,d}}$.

 \begin{lem}[{Concavity Relations from Aleksandrov-Fenchel}]
 \label{lem:concavity_simplices}
 	For $n, k \in [d]$, consider a ``copy'' of $\Delta_{n,k}$ in $\Delta_{d,d}$ given by 
 	\[
 		S = \setcond{c_1 e_{i_1} + \cdots + c_n e_{i_n} + t }{
 		(c_1,\ldots,c_n) \in \Delta_{n,k} },
 	\] 
 	where $1 \le i_1 < \cdots < i_n \le d$ and 
 	$t \in \Z_{\geq 0}^d$ satisfies $t_1+\dots+t_d=d-k$. Denote
 	the vertices of $\conv(S)$ by $b_j = k e_{i_j}+t \in \Delta_{d,d}$ 
 	for $j \in [n]$ . 
 	Then, for every $K \in (\Km_d)^d$ and every $p \in S$, the mixed volume
 	configuration $\MV_K$ satisfies the log-concavity relation
 	\begin{align}
 		\label{eq:conc_rel}
 		\MV_K(p)^k \ge \MV_K(b_1)^{c_1} \cdots \MV_K(b_{n})^{c_n},
 	\end{align}
 	where $(c_1,\ldots,c_n) \in \Delta_{n,k}$ is the unique
 	vector satisfying $kp= c_1 b_1 + \dots + c_{n} b_{n}$. 
 \end{lem}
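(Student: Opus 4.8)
The statement is a multivariate, higher-order log-concavity relation on a simplicial slice $S$ of $\Delta_{d,d}$, with vertices $b_1,\dots,b_n$. The natural strategy is to reduce it to the one-dimensional concavity already recorded in Theorem~\ref{thm:af} and Lemma~\ref{lem:concavity_along_line}, passing through logarithms so that the multiplicative inequality \eqref{eq:conc_rel} becomes the additive (concavity) statement
\[
	\mv_K(p) \ge \tfrac{c_1}{k}\,\mv_K(b_1) + \cdots + \tfrac{c_n}{k}\,\mv_K(b_n),
\]
i.e.\ that $\mv_K$ restricted to $S$ lies above the affine interpolation of its values at the vertices $b_j$. This is precisely the assertion that $\mv_K|_S$ is (discretely) concave as a function on the simplex $\conv(S)$, and the convex-combination coefficients $c_j/k$ are exactly the barycentric coordinates of $p$ with respect to $b_1,\dots,b_n$ (this is the content of $kp=c_1b_1+\cdots+c_nb_n$ with $\sum c_j = k$).

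The plan is to prove this by induction on $n$, the number of vertices. For $n=1$ the relation is trivial ($p=b_1$), and for $n=2$ it is a one-dimensional statement: $S$ is a segment of lattice points lying on a line in direction $e_{i_1}-e_{i_2}$, and along any such direction Theorem~\ref{thm:af} gives exactly the concavity $2\mv_K(q)\ge \mv_K(q+e_{i_1}-e_{i_2})+\mv_K(q-e_{i_1}+e_{i_2})$; feeding this into Lemma~\ref{lem:concavity_along_line}(iii) (with $C=0$) yields the endpoint interpolation bound, which is \eqref{eq:conc_rel} for two vertices. For the inductive step I would fix a point $p\in S$ and factor its expression through an intermediate face: write $p$ first as a convex combination of $b_n$ and a point $p'$ lying on the opposite facet $S'=\conv(b_1,\dots,b_{n-1})$, apply the $n=2$ (one-dimensional segment) case along the segment from $b_n$ through $p$ to $p'$ to bound $\mv_K(p)$ below by the corresponding convex combination of $\mv_K(b_n)$ and $\mv_K(p')$, and then apply the inductive hypothesis on the smaller simplex $S'$ to bound $\mv_K(p')$ from below by the barycentric interpolation of $\mv_K(b_1),\dots,\mv_K(b_{n-1})$. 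Substituting and checking that the two sets of barycentric coefficients compose correctly (they must, since barycentric coordinates are consistent under subdivision) recovers \eqref{eq:conc_rel}.

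The step requiring the most care is the one-dimensional segment bound used as the base case and inside the induction. One must verify that the lattice points of $S$ along the line through $b_n$ and $p'$ are consecutive points of an arithmetic progression in the direction $e_{i_n}-e_{i_j}$ (so that Theorem~\ref{thm:af} applies at each interior point and Lemma~\ref{lem:concavity_along_line} is legitimately invoked), and that the rational interpolation weights coming from Lemma~\ref{lem:concavity_along_line}(iii), stated there for a segment of length $n$, translate correctly into the barycentric coordinates $c_j/k$ on $S$. In other words, the genuine work is bookkeeping: confirming that every intermediate point $p'$ and each segment endpoint is an integer point of the relevant slice, and that the barycentric weights multiply out to the claimed exponents $c_1,\dots,c_n$. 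The inequality \eqref{eq:conc_rel} then follows by exponentiating, and the auxiliary claim that $\MV_K(p)>0$ for full-dimensional bodies (needed to pass to logarithms) follows from the same concavity applied to the strictly positive vertex values $\Vol(K_{i_j})>0$.
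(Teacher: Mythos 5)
Your high-level architecture (pass to logarithms, induct on the number $n$ of vertices, settle $n=2$ via Theorem~\ref{thm:af} together with Lemma~\ref{lem:concavity_along_line}(iii), then compose barycentric coordinates) matches the paper's proof, but your inductive step contains a genuine gap. You slice with the segment from the vertex $b_n$ through $p$ to a point $p'$ on the opposite facet $\conv(b_1,\dots,b_{n-1})$ and invoke the ``one-dimensional segment bound'' along it. For an interior point $p$ and $n\ge 3$, the direction $p'-b_n$ has $n-1$ strictly positive coordinates and one strictly negative coordinate, so it is \emph{not} parallel to any $e_i-e_j$. Consequently neither Theorem~\ref{thm:af} nor the $n=2$ case of the lemma (which only covers segments that are copies of $\Delta_{2,k}$, i.e.\ segments parallel to some $e_i-e_j$) applies along $[b_n,p']$. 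This is not a bookkeeping issue: concavity of $\mv_K$ along such diagonal directions is precisely what does \emph{not} follow from \eqref{AFI} in black-box style --- Lemma~\ref{conc_dir} and Proposition~\ref{P:11-1-1} show that along directions $u_{I,j}$ one only gets \emph{weak} concavity with a multiplicative constant, and even that requires the square inequalities \eqref{square} rather than \eqref{AFI} alone. There is also a second, concrete failure: the exit point $p'$ need not be a lattice point, so $\mv_K(p')$ need not even be defined. For example, with $d=3$, $S=\Delta_{3,3}$ and $p=(1,1,1)$, the line from $b_3=(0,0,3)$ through $p$ meets the opposite facet at $(3/2,3/2,0)\notin\Z^3$.

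The paper avoids both problems by choosing a different slicing segment: through the interior point $p$ it takes the line $p+\R(e_1-e_2)$, an honest Aleksandrov--Fenchel direction. This line exits $\conv(S)$ through the two facets $F_1=\conv(b_2,\dots,b_n)$ and $F_2=\conv(b_1,b_3,\dots,b_n)$, and the intersection points $a_1,a_2$ are automatically lattice points since moving along $e_1-e_2$ changes only two coordinates, by integers. Concavity along this segment (Theorem~\ref{thm:af} plus Remark~\ref{r:restate}) gives $\mv_K(p)\ge\sigma_1\mv_K(a_1)+\sigma_2\mv_K(a_2)$, and the induction hypothesis is then applied to \emph{both} facets $F_1$ and $F_2$ (each a copy of $\Delta_{n-1,k}$), rather than to one facet and a vertex. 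With that replacement, the composition of barycentric coordinates works out exactly as you describe at the end of your plan.
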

\begin{proof}
For the sake of readability we pass to proving an equivalent logarithmic
version of \eqref{eq:conc_rel}, that is, we show the inequality
\begin{align*}
\mv_K(p) \ge \frac{c_1}{k} \mv_K(b_1) + \dots + 
\frac{c_{n}}{k} \mv_K(b_n).
\end{align*}

We will prove the statement by induction on the number $n$ of vertices
of $S$. For $n=2$ the statement follows directly from Remark~\ref{r:restate} together with Theorem~\ref{thm:af}. Let $n$ now be an arbitrary positive 
integer and assume without loss of generality that the vertices of $S$ are
of the form $b_i=k e_i +t$ for all $i \in [n]$. 
We may assume that $p$ is an interior point of $\conv(S)$, as otherwise 
we can pass to the face of $\conv(S)$ containing $p$ and obtain the
statement by induction. 
It is straightforward to verify that the
line $p+\R (e_1-e_2)$ intersects the two facets 
$F_1=\conv(b_2,\dots,b_n)$ and $F_2=\conv(b_1,b_3,\dots,b_n)$ of 
$\conv(S)$ in  
lattice points $a_1,a_2$ in the relative interior of 
$F_1$ and $F_2$, respectively. Then $\mv_K(p)=\mv_K(a_1+ \tau (e_1-e_2))$ for
some $\tau \in \Z_{\geq 1}$ and, by Theorem~\ref{thm:af}, the 
logarithmic mixed volumes 
$$\mv_K(a_1),\mv_K(a_1+(e_1-e_2)),\dots,\mv_K(a_1+\tau (e_1-e_2)),\dots,\mv_K(a_2)$$ form a concave sequence. By Remark~\ref{r:restate}, this implies 
\begin{align}
\label{e:ind_step}
\mv_K(p) \geq \sigma_1 \mv_K(a_1) + \sigma_2 \mv_K(a_2),
\end{align}
for unique rational positive numbers 
$\sigma_1,\sigma_2 \in \Q_{> 0}$
with $\sigma_1+\sigma_2=1$ and $p=\sigma_1 a_1 + \sigma_2 a_2$. 
As $a_1$ and $a_2$ are lattice points in the relative interior of the
facets $F_1$ and $F_2$, respectively, one has
\begin{align*}
a_1=\mu^1_2 b_2 + \mu^1_3 b_3 + \dots+ \mu^1_{n} b_{n},
\hspace*{\baselineskip} a_2=\mu^2_1 b_1 + \mu^2_3 b_3 + \dots+ \mu^2_{n} b_{n},
\end{align*}
for some positive rational numbers $\mu^1_2,\mu^1_3,\dots,\mu^1_{n},
\mu^2_1,\mu^2_3,\dots,\mu^2_n \in \Q_{> 0}$.
By the induction hypothesis this implies
\begin{align*}
\mv_K(a_1)&\geq \mu^1_2 \mv_K(b_2)+\mu^1_3 \mv_K(b_3)+\dots+ \mu^1_{n} \mv_K(b_{n}),\\  
\mv_K(a_2)&\geq \mu^2_1 \mv_K(b_1)+\mu^2_3 \mv_K(b_3) + \dots+ 
\mu^2_{n} \mv_K(b_{n}).
\end{align*}
Combining this with \eqref{e:ind_step} one obtains
\begin{align*}
\mv_K(p) &\geq \left(\sigma_2 \mu^2_1 \right) \mv_K(b_1)
+ \left(\sigma_1 \mu^1_2 \right)
\mv_K(b_2)
+ \left( \sigma_1 \mu^1_3 + \sigma_2 \mu^2_3 \right) \mv_K(b_3)
+\dots + \left( \sigma_1 \mu^1_n + \sigma_2 \mu^2_n \right) \mv_K(b_n).
\end{align*}
By construction, the coefficients on the right hand-side 
satisfy 
\begin{align}
\label{eq:bar_coords}
p = \left(\sigma_2 \mu^2_1 \right) b_1
+ \left(\sigma_1 \mu^1_2 \right)
b_2+ \left( \sigma_1 \mu^1_3 + \sigma_2 \mu^2_3 \right) b_3
+\dots + \left( \sigma_1 \mu^1_n + \sigma_2 \mu^2_n \right) b_n,
\end{align}
which proves the claim as the barycentric coordinates of $p$ 
with respect to the vertices $b_1,\dots,b_n$ are unique
(in particular, all coefficients in \eqref{eq:bar_coords}
are integral multiples of $\frac{1}{k}$ by construction of $S$). 
\end{proof}

\begin{rem}\label{R:single-volume-bound}
The particular case of Lemma~\ref{lem:concavity_simplices} when $S=\Delta_{d,d}$ and $p={\bm 1}$ provides the following bound for the product of the volumes of the $K_i$:
$$\MV_K({\bm 1})^d\geq \Vol(K_1)\cdots\Vol(K_d).$$
This inequality can also be found in \cite[(7.64)]{Schneider2014}. In particular, we see that if all $K_i$ have volume at least 1
then  $\Vol(K_i) \le \MV_K({\bm 1})^d$ for every $i \in [d]$. 
\end{rem}

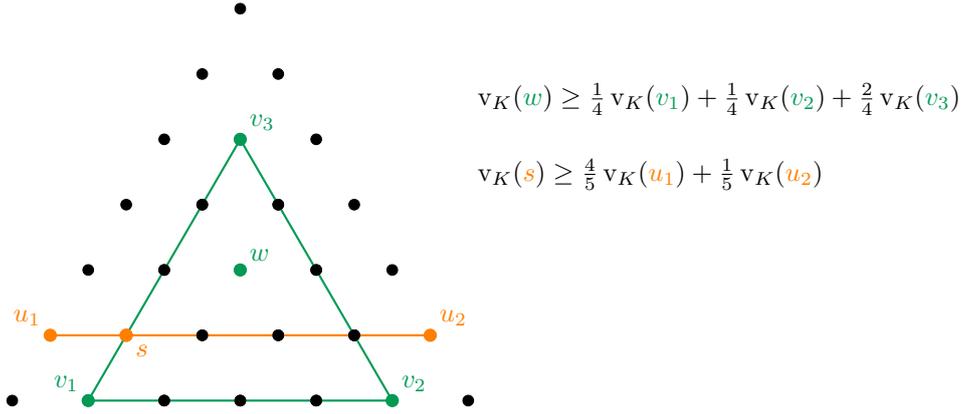
\begin{figure}
\begin{tikzpicture}[scale=1]
		\filldraw (0,0) circle (0.07);
		\filldraw (1,0) circle (0.07);
		\filldraw (2,0) circle (0.07);
		\filldraw (3,0) circle (0.07);
		\filldraw (0.5,\h) circle (0.07);
		\filldraw (1.5,\h) circle (0.07);
		\filldraw (2.5,\h) circle (0.07);
		\filldraw (1,2*\h) circle (0.07);
		\filldraw (2,2*\h) circle (0.07);
		\filldraw (1.5,3*\h) circle (0.07);
		\filldraw (4,0) circle (0.07);
		\filldraw (3.5,\h) circle (0.07);
		\filldraw (3,2*\h) circle (0.07);
		\filldraw (2.5,3*\h) circle (0.07);
		\filldraw (2,4*\h) circle (0.07);
		\filldraw (5,0) circle (0.07);
		\filldraw (4.5,\h) circle (0.07);
		\filldraw (4,2*\h) circle (0.07);
		\filldraw (3.5,3*\h) circle (0.07);
		\filldraw (3,4*\h) circle (0.07);
		\filldraw (2.5,5*\h) circle (0.07);
		\filldraw (6,0) circle (0.07);
		\filldraw (5.5,\h) circle (0.07);
		\filldraw (5,2*\h) circle (0.07);
		\filldraw (4.5,3*\h) circle (0.07);
		\filldraw (4,4*\h) circle (0.07);
		\filldraw (3.5,5*\h) circle (0.07);
		\filldraw (3,6*\h) circle (0.07);
		
		\filldraw[orange] (0.5,\h) node[above left]{$u_1$} circle (0.08);
		\filldraw[orange] (1.5,\h) node[below right]{$s$} circle (0.08);
		\filldraw[orange] (5.5,\h) node[above right]{$u_2$} circle (0.08);
		\draw[orange,thick] (0.5,\h)--(5.5,\h);
		\filldraw (2.5,\h) circle (0.07);
		\filldraw (3.5,\h) circle (0.07);
		\filldraw (4.5,\h) circle (0.07);
		
		\filldraw[ForestGreen] (1,0) node[above left]{$v_1$} circle (0.08);
		\filldraw[ForestGreen] (5,0) node[above right]{$v_2$} circle (0.08);
		\filldraw[ForestGreen] (3,4*\h) node[above right]{$v_3$} circle (0.08);
		\filldraw[ForestGreen] (3,2*\h) node[above right]{$w$} circle (0.08);
		\draw[ForestGreen,thick] (1,0)--(5,0);
		\draw[ForestGreen,thick] (5,0)--(3,4*\h);
		\draw[ForestGreen,thick] (3,4*\h)--(1,0);
		
		\filldraw[orange] (1.5,\h) circle (0.08);
		\filldraw (4.5,\h) circle (0.07);
		\filldraw (2,0) circle (0.07);
		\filldraw (3,0) circle (0.07);
		\filldraw (4,0) circle (0.07);
		\filldraw (4,2*\h) circle (0.07);
		\filldraw (3.5,3*\h) circle (0.07);
		\filldraw (2.5,3*\h) circle (0.07);
		\filldraw (2,2*\h) circle (0.07);
		
		\node[right] at (6,4)
		{$\mv_K({\color{ForestGreen}w}) \geq \frac{1}{4}\mv_K({\color{ForestGreen}v_1})+\frac{1}{4}\mv_K({\color{ForestGreen}v_2})+\frac{2}{4}\mv_K({\color{ForestGreen}v_3})$};
		\node[right] at (6,3)
		{$\mv_K({\color{orange}s}) \geq \frac{4}{5}\mv_K({\color{orange}u_1})+\frac{1}{5}\mv_K({\color{orange}u_2})$};
		\end{tikzpicture}
\caption{Two examples of concavity relations
of the type shown in Lemma~\ref{lem:concavity_simplices}.}
\end{figure}

The following is the main statement of this section which provides bounds 
that the Aleksandrov-Fenchel relations yield for the mixed volume
$\MV_K(p)$ for any $p \in \Delta_{d,d}$ when $\MV_K(\bm{1})$ is fixed. 

\begin{thm}[Bounds from Aleksandrov-Fenchel inequalities]
\label{thm:af_Kounds}
Let $K \in (\Km_{d,1})^d$ be a $d$-tuple of $d$-dimensional convex bodies of
volume at least $1$ and $p \in  \Delta_{d,d}$. 
Then 
\begin{align}
\label{eq:af_pbound}
\mv_K(p) \leq \mv_K(\bm{1})\prod_{i\, :\, p_i>0}p_i.
\end{align}
Furthermore, given that $\MV_K(\bm{1})=m$, one 
obtains the following bound: 
\begin{align}
\label{eq:af_minkbound}
	\Vol(\Sigma(K)) \le m^{3^{q}2^{r}}  d^d,
\end{align}
where $d=3q+2r$ with $q\in\Z_{\geq 0}$ and $r\in \{0,1,2\}$.
\end{thm}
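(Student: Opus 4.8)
The plan is to establish the pointwise bound \eqref{eq:af_pbound} first, and then to read off \eqref{eq:af_minkbound} from it by a routine summation. Throughout I will use that $\mv_K \ge 0$ on all of $\Delta_{d,d}$: since each $K_i$ has volume at least $1$, Lemma~\ref{lem:concavity_simplices} gives $\MV_K(p) \ge 1$, hence $\mv_K(p) = \log \MV_K(p) \ge 0$, for every $p$.

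To prove \eqref{eq:af_pbound} I would induct on the number of vanishing coordinates of $p$, which equals the total excess $\sum_i (p_i-1) = d - |\supp p|$. If this number is $0$ then $p = \bm{1}$, the product on the right is empty (equal to $1$), and the inequality holds with equality. For the inductive step, pick a coordinate $i_0$ with $a := p_{i_0} \ge 2$. Since the total excess is at least $p_{i_0}-1 = a-1$, the point $p$ has at least $a-1$ vanishing coordinates; choose $a-1$ of them, say $j_1,\dots,j_{a-1}$. Apply Lemma~\ref{lem:concavity_simplices} to the copy of $\Delta_{a,a}$ supported on the coordinates $\{i_0,j_1,\dots,j_{a-1}\}$ with translation part $t = p - a\,e_{i_0}$ (note $t \ge 0$ and $t_1+\dots+t_d = d-a$); its vertices are $b_0 = p$ and $b_l = a\,e_{j_l} + t$, and the interior lattice point $p' := t + e_{i_0} + e_{j_1} + \cdots + e_{j_{a-1}}$ is their barycenter. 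The lemma then yields $\mv_K(p') \ge \frac{1}{a}\sum_{l=0}^{a-1}\mv_K(b_l) \ge \frac{1}{a}\mv_K(p)$, i.e. $\mv_K(p) \le a\,\mv_K(p')$. Now $p'$ is obtained from $p$ by lowering coordinate $i_0$ from $a$ to $1$ and raising $j_1,\dots,j_{a-1}$ from $0$ to $1$, so it has fewer vanishing coordinates and $\prod_{i:p'_i>0}p'_i = \frac{1}{a}\prod_{i:p_i>0}p_i$. The induction hypothesis applied to $p'$ gives $\mv_K(p) \le a\cdot\big(\frac{1}{a}\prod_{i:p_i>0}p_i\big)\mv_K(\bm{1}) = \big(\prod_{i:p_i>0}p_i\big)\mv_K(\bm{1})$, as desired.

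To deduce \eqref{eq:af_minkbound}, I would first note $m = \MV_K(\bm{1}) \ge 1$, exponentiate \eqref{eq:af_pbound} to obtain $\MV_K(p) \le m^{\prod_{i:p_i>0}p_i}$, and bound the exponent by $M := \max_{p\in\Delta_{d,d}}\prod_{i:p_i>0}p_i$, using $m \ge 1$. The quantity $M$ is the largest product of a partition of $d$ into positive integer parts; the classical fact that such a product is maximized by taking all parts equal to $3$, with a remainder handled by $2$ or $4 = 2+2$, gives exactly $M = 3^q 2^r$ for $d = 3q+2r$ with $r \in \{0,1,2\}$. Finally, Proposition~\ref{prop:formula_mink} together with the multinomial identity $\sum_{p\in\Delta_{d,d}}\binom{d}{p} = d^d$ yields $\Vol(\Sigma(K)) = \sum_p \binom{d}{p}\MV_K(p) \le m^{M}\sum_p\binom{d}{p} = m^{3^q 2^r} d^d$.

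The one genuinely delicate point is the inductive step for \eqref{eq:af_pbound}: the naive move that dumps the entire excess $a-1$ of coordinate $i_0$ into a single vanishing coordinate produces a factor that overshoots $\prod_{i:p_i>0}p_i$, since it leaves behind a coordinate of value $a-1>1$. Spreading the excess across $a-1$ distinct vanishing coordinates is precisely what makes the accumulated factor equal to $p_{i_0}$ and keeps the product bookkeeping multiplicative; verifying that enough vanishing coordinates exist (guaranteed by total excess $\ge a-1$) and that $p'$ is the barycenter of the chosen sub-simplex is then straightforward. Everything after the pointwise bound is elementary bookkeeping.
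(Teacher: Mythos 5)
Your proof is correct and follows essentially the same route as the paper: the same induction on the number of vanishing coordinates of $p$, applying Lemma~\ref{lem:concavity_simplices} to a sub-simplex so as to spread the excess of a single large coordinate over distinct vanishing coordinates (yielding $\mv_K(p)\le a\,\mv_K(p')$ via non-negativity of the remaining terms), and then reducing \eqref{eq:af_minkbound} to the maximum-product partition of $d$, which is $3^q2^r$. The only cosmetic differences are that the paper sorts $p$ and works with the largest entry, and proves the maximum-product fact from scratch rather than citing it as classical.
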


\begin{proof}
We prove \eqref{eq:af_pbound} by inductively making use of 
Lemma~\ref{lem:concavity_simplices}. The induction is over the number
of zero entries of $p$ which we denote by $k$. 
Let us without loss of generality restrict to the case that
$p$ is decreasing, that is $p_1 \geq \dots \geq p_d$.

As $k=0$ implies $p=\bm{1}$ the statement is trivially fulfilled in this case.
Now let $k \in [d-1]$ be arbitrary.  Assume $p = (p_1,\dots,p_{d-k},0,\dots,0)$ has exactly $k$ zero entries and assume that the statement is true for any vector with at most $k-1$ zero entries.
Consider the vector
\begin{align*}
p' = (1,p_2,\dots,p_{d-k},\underbrace{1,\dots,1}_{p_1-1 \text{ times}},0,\dots,0).
\end{align*}
Clearly $p'$ has fewer zero entries than $p$ and, therefore,
\begin{align*}
\mv_K(p') \leq \mv_K(\bm{1})\prod_{i\, :\, p'_i>0}p'_i =\mv_K(\bm{1})\, p_2 \cdots p_{d-k}. 
\end{align*}
However, if one writes $p'$ as the barycenter of a suitable $(p_1-1)$-simplex,
Lemma~\ref{lem:concavity_simplices} yields
\begin{align*}
\mv_K(p') \geq &\frac{1}{p_1} \mv_K(p_1,p_2,\dots,p_{d-k},0,\dots,0) \\
+ &\frac{1}{p_1} \mv_K(0,p_2,\dots,p_{d-k},p_1,0,\dots,0) \\ 
+ &\frac{1}{p_1} \mv_K(0,p_2,\dots,p_{d-k},0,p_1,0,\dots,0) \\
+ & \cdots \\
+ &\frac{1}{p_1} \mv_K(0,p_2,\dots,p_{d-k},0,\dots,0,p_1,0,\dots,0).
\end{align*}
In particular, $\mv_K(p)=\mv_K(p_1,\dots,p_{d-k},0,\dots,0) \leq p_1 \mv_K(p')$, 
as we assumed the volumes of the $K_i$ to be at least $1$ and therefore
all terms on the right hand-side of the above inequality are non-negative.
This proves \eqref{eq:af_pbound}.

We now proceed to using \eqref{eq:af_pbound} in order to show the bound \eqref{eq:af_minkbound}. 
Write $d=3q+2r$ for unique non-negative integers $q,r$ with $r\in\{0,1,2\}$.
We first show that the maximal value
of $\prod_{i\, :\, p_i>0}p_i \eqqcolon g(p)$ is attained at a point 
$p_{\max}$ with $q$ entries equal to $3$, $r$ entries equal to $2$, and the remaining entries equal to $0$.

Note first that $2^{\floor{k/2}}>k$ for all $k \geq 6$. Therefore,
for any point $p \in \Delta_{d,d}$ with one coordinate being $k \geq 6$,
we can construct another point $p'$ by replacing the entry 
with value $k$ with $\floor{k/2}$ entries with value $2$ and obtain
$g(p')>g(p)$. Similarly, any entry with value $5$ in $p$ can be replaced
by two entries with values $2$ and $3$ respectively to increase the
value of $g$. As also any entry with value $4$ can be replaced by two
entries both with value $2$ without changing the value of $g$, this
shows that there exists a point $p$ maximizing $g$ with $p_i \leq 3$
for all $i \in [d]$. If $p$ has an entry with value $1$, one 
can construct a point increasing the value of $g$ by replacing 
$1,3$ with $2,2$, or $2,1$ with $3$, or $1,1$ with $2$.
One of these replacements is always possible and, hence,  a point $p$
maximizing $g$ can be chosen such that $p_i \in \{0,2,3\}$ for
all $i \in [d]$. Finally the observation that $2 \cdot 2 \cdot 2
< 3 \cdot 3$ shows that the maximum of $g$ is actually attained
by $p_{\max}$.

Combining this insight with Proposition~\ref{prop:formula_mink}
one obtains that, for any tuple $K \in (\Km_{d,1})^d$, one has
\begin{align*}
\Vol(\Sigma(K)) = \sum_{p \in \Delta_{d,d}} 
\binom{d}{p} 2^{\mv_K(p)} \leq 
\sum_{p \in \Delta_{d,d}}\binom{d}{p} 2^{\mv_K(\bm{1})g(p_{\max})}  = d^d m^{g(p_{\max})},
\end{align*}
where $m =2^{\mv_K(\bm{1})}=\MV_K(\bm{1})$. This shows
\eqref{eq:af_minkbound}.
\end{proof}

\subsection{On the optimality of Theorem~\ref{thm:af_Kounds}}

This subsection is devoted to showing that Theorem~\ref{thm:af_Kounds} 
actually provides the best bounds that one can get by only using Aleksandrov-Fenchel inequalities in what we call 
black-box style in the introduction. In order to make this 
term precise we need to define the set of all positive-valued functions on $\Delta_{d,d}$ that satisfy all 
linearized Aleksandrov-Fenchel inequalities \eqref{e:af}. In this
language, a statement that is obtained in black-box style from
the Aleksandrov-Fenchel inequalities is a statement that holds 
for each function in this set.

\begin{defn}
We define the {\it Aleksandrov-Fenchel
cone} $\AFC_d \subset \R^{\Delta_{d,d}}$ as the set of all $\mv \in \R^{\Delta_{d,d}}$ satisfying
\begin{align*}
	\mv(p)& \ge 0 & & \text{for all} \ p \in \{d e_1,\ldots, d e_d\}, \ \text{and}
	\\ 2 \mv(p) & \ge \mv(p + e_i - e_j) + \mv(p -e_i + e_j) & & \text{for all} \ p, p \pm (e_i-e_j) \in \Delta_{d,d} \ \text{with} \ i,j \in [d].
\end{align*}
We also define the \emph{Aleksandrov-Fenchel polytope} $\AF_d$ to be the following hyperplane section of $\AFC_d$:
\[
	\AF_d := \setcond{\mv \in \AFC_d}{\mv(\mathbf{1}) = 1}.
\]
\end{defn}
The Aleksandrov-Fenchel inequality implies
\[
	\mv(\Km_{d,1},\Delta_{d,d}) \subseteq \AFC_d.
\]
Furthermore, for all $d$-tuples $K\in (\Km_{d,1})^d$ with $\MV_K(\mathbf{1}) = m$, we have $\mv_K(\mathbf{1}) = \log m$ and, hence, $\mv_K \in (\log m) \AF_d$. 

\begin{rem}\label{rem:AFP}
It is straightforward to verify that Theorem~\ref{thm:af_Kounds} and
in particular Lemma~\ref{lem:concavity_simplices} are proven by 
iterated linear combination of inequalities \eqref{e:af}.  
This shows that the bound (\ref{eq:af_pbound})  in Theorem~\ref{thm:af_Kounds} holds for any $\mv\in(\log m) \AF_d$. 
\end{rem}

The following proposition shows that Theorem~\ref{thm:af_Kounds} provides 
the best possible bounds that can be deduced from Aleksandrov-Fenchel
inequalities in a black-box style.

\begin{prop}
\label{prop:AF_tight}
	Let $p^{\ast} \in \Delta_{d,d}$. Then 
	\[
		\max_{w \in \AF_d} w(p^{\ast}) =\prod_{i\, :\, p^{\ast}_i>0}p^{\ast}_i.
	\]
\end{prop}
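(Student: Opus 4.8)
The inequality $\max_{w\in\AF_d} w(p^{\ast})\le \prod_{i\,:\,p^{\ast}_i>0}p^{\ast}_i$ is already in hand: it is exactly the bound \eqref{eq:af_pbound} of Theorem~\ref{thm:af_Kounds}, which by Remark~\ref{rem:AFP} holds for every $\mv\in(\log m)\AF_d$. Taking $m=2$, so that $\log m=1$ and $\AF_d=(\log m)\AF_d$, and using $w(\mathbf 1)=1$ yields $w(p^{\ast})\le\prod_{i\,:\,p^{\ast}_i>0}p^{\ast}_i$ for all $w\in\AF_d$. Hence the entire content of the proposition is the reverse inequality, and the plan is to exhibit an explicit function $w^{\ast}\in\AF_d$ attaining the value $\prod_{i\,:\,p^{\ast}_i>0}p^{\ast}_i$ at $p^{\ast}$.

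The candidate I would use is the monomial
\[
	w^{\ast}(x)=\prod_{i\,:\,p^{\ast}_i>0}x_i,
\]
regarded as a function on $\Delta_{d,d}$. By construction $w^{\ast}(\mathbf 1)=1$ and $w^{\ast}(p^{\ast})=\prod_{i\,:\,p^{\ast}_i>0}p^{\ast}_i$, so it only remains to check that $w^{\ast}\in\AF_d$, i.e.\ that $w^{\ast}$ satisfies the defining inequalities of $\AFC_d$ together with the normalization.

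Nonnegativity at the vertices $de_1,\dots,de_d$ is immediate, since $w^{\ast}\ge 0$ on the whole nonnegative orthant. For the concavity inequalities I would fix $p\in\Delta_{d,d}$ and $i,j\in[d]$ and examine the restriction $g(t):=w^{\ast}\bigl(p+t(e_i-e_j)\bigr)$. Because $w^{\ast}$ is multilinear (degree one in each variable) and the direction $e_i-e_j$ alters only the $i$-th and $j$-th coordinates, $g$ is a polynomial in $t$ of degree at most $2$: it is constant or affine unless both $i,j$ lie in $J:=\{\,i:p^{\ast}_i>0\,\}$, in which case
\[
	g(t)=(p_i+t)(p_j-t)\prod_{k\in J\setminus\{i,j\}}p_k ,
\]
a downward parabola whose leading coefficient $-\prod_{k\in J\setminus\{i,j\}}p_k$ is nonpositive, as every $p_k\ge 0$. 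Thus $g$ is concave on all of $\R$, and in particular $g(0)\ge\tfrac12\bigl(g(-1)+g(1)\bigr)$, which is precisely the inequality $2w^{\ast}(p)\ge w^{\ast}(p+e_i-e_j)+w^{\ast}(p-e_i+e_j)$. Therefore $w^{\ast}\in\AFC_d$, and combined with $w^{\ast}(\mathbf 1)=1$ we obtain $w^{\ast}\in\AF_d$, which establishes the lower bound.

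The only genuine difficulty here is guessing the right extremizer; once the monomial $w^{\ast}$ is written down I expect the verification above to be entirely routine. I would also emphasize that $w^{\ast}$ need not be the logarithmic mixed volume configuration $\mv_K$ of any actual tuple of convex bodies. This is exactly the point of the statement: it shows that the Aleksandrov--Fenchel inequalities, when used in the black-box sense of passing to the relaxation $\AF_d$, cannot yield any bound on $\mv_K(p^{\ast})$ sharper than \eqref{eq:af_pbound}.
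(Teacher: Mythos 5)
Your proposal is correct and follows essentially the same route as the paper: the upper bound is quoted from Theorem~\ref{thm:af_Kounds} via Remark~\ref{rem:AFP}, and your extremizer $w^{\ast}(p)=\prod_{i\,:\,p^{\ast}_i>0}p_i$ is precisely the function the paper constructs, with membership in $\AF_d$ verified by the same case analysis along $e_i-e_j$ (constant if neither variable occurs, affine if exactly one occurs, concave quadratic $(p_i+t)(p_j-t)u$ with $u\ge 0$ if both occur). Your closing remark that $w^{\ast}$ need not arise as $\mv_K$ for any actual tuple is also consistent with the paper, which makes this point separately in Corollary~\ref{cor:AF_notfull}.
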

\begin{proof}
	Let $p^\ast=(p_1^\ast,\ldots,p_d^\ast)$.
	Without loss of generality, we can assume that the entries of $p$ are sorted in descending order. Let $r \in [d]$ be the largest number 
	satisfying $p^{\ast}_r > 0$.
	
	The fact that $\prod_{i\, :\, p_i>0}p_i = p_1 \cdots p_r$ is an upper bound is
	true by Theorem~\ref{thm:af_Kounds} (see Remark~\ref{rem:AFP}). It remains to confirm that this value is indeed the maximum. To this end, consider $w \in \R^{\Delta_{d,d}}$ given by
	\begin{align*}
		w(p) =  p_1 \cdots p_r \quad \text{for} \ p \in \Delta_{d,d}.
	\end{align*}
	Under this assumption, we see that for the chosen $w$ one has $w(p^\ast) = \prod_{i\, :\, p^{\ast}_i>0}p^{\ast}_i$. 
	
	It remains to verify that $w \in \AF_d$. 
	We need to show that $w \in \R^{\Delta_{d,d}}$ is discretely concave in the directions $e_i - e_j$ with $i \ne j$ in the variables $p=(p_1,\ldots,p_d)$. The function $w$ is a product of some of the variables $p_1,\ldots,p_d$. If neither $p_i$ nor $p_j$ occurs in the product, $w(p)$ is constant therefore concave
	in direction $e_i-e_j$. If exactly one of the variables $p_i$ and $p_j$ occurs in the product, then the function is linear in direction $e_i-e_j$. Now consider the case that both $p_i$ and $p_j$ occur in the product. For simplicity, let $i=1$ and $j=2$, so

	\[
		w(p) = p_1 p_2 u,
	\] 
	where $u = \prod_{i=3}^{r}p_i \ge 0$  is independent of $p_1$ and $p_2$ and so is constant when we change $p$ along the direction $e_1 -e_2$. Changing $p$ along the direction $e_1 - e_2$ means, fixing $p \in \Delta_{d,d}$, and considering the discrete function $\phi : \{-p_1,\ldots,p_2\} \to \Z$ given by
	\[
		\phi(s) := w(p+ s e_1 - s e_2) = (p_1 + s) (p_2-s) u.
	\]
	If $u=0$, $\phi$ is identically equal to $0$. Otherwise
	it is immediately clear that $\phi$ is concave, because it is given by an expression that defines a concave quadratic polynomial. 
\end{proof}

\section{An asymptotically sharp bound derived from square inequalities}\label{S:weak}

One of the main tools in proving the asymptotically sharp bound in Theorem~\ref{T:main result} is the following inequality which expresses a log-concavity property of $\MV_K$ over a ``square'' in $\Delta_{d,d}$ whose edge directions are the standard directions 
$e_i-e_j$. 

\begin{lem}[Square Inequalities]
\label{lem:square}
Let $K \in (\Km_d)^d$ be a $d$-tuple of $d$-dimensional convex bodies. Let $u_1=e_{i_1}-e_j$ and $u_2=e_{i_2}-e_j$
 for pairwise different $i_1,i_2,j \in [d]$. Then 
\begin{align*}
\MV_K(p)\MV_K(p+u_1+u_2) \leq 2\MV_K(p+u_1)\MV_K(p+u_2),
\end{align*}
for  any $p \in  \Delta_{d,d}$ satisfying $p_j \geq 2$.
\end{lem}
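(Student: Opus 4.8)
The plan is to recognize Lemma~\ref{lem:square} as nothing more than a reformulation, in the $\MV_K(p)$-notation, of the inequality \eqref{square}, and then to supply the promised sketch of \eqref{square} itself, which is derived from the Aleksandrov--Fenchel inequality \eqref{AFI} by a short but genuinely non-black-box argument. So there are really two steps: a bookkeeping translation, and the derivation of \eqref{square}.

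For the translation, I would fix $p \in \Delta_{d,d}$ with $p_j \ge 2$ and $u_1 = e_{i_1}-e_j$, $u_2 = e_{i_2}-e_j$ for pairwise distinct $i_1,i_2,j$, and set $A = K_j$, $B = K_{i_1}$, $C = K_{i_2}$. I would then let $D$ be the $(d-2)$-tuple in which each $K_\ell$ occurs with multiplicity $p_\ell$, except that $K_j$ occurs only $p_j-2$ times; the hypothesis $p_j \ge 2$ is exactly what makes $D$ a legitimate tuple of $d-2$ bodies. A direct count of multiplicities then shows
\[
	\MV(A,A,D) = \MV_K(p), \qquad \MV(B,C,D) = \MV_K(p+u_1+u_2),
\]
\[
	\MV(A,B,D) = \MV_K(p+u_1), \qquad \MV(A,C,D) = \MV_K(p+u_2),
\]
so that \eqref{square} turns verbatim into the asserted inequality.

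It then remains to sketch \eqref{square}. Writing $f = \MV(A,B,D)$ and $g = \MV(A,C,D)$, which are positive since all of $A,B,C$ and the bodies in $D$ are full-dimensional, I would apply \eqref{AFI} to the pair consisting of $A$ and the Minkowski combination $X = gB + fC$ (a convex body, as $f,g \ge 0$), with common part $D$, giving
\[
	\MV(A,X,D)^2 \ge \MV(A,A,D)\,\MV(X,X,D).
\]
By multilinearity, $\MV(A,X,D) = g\,\MV(A,B,D) + f\,\MV(A,C,D) = 2fg$, while
\[
	\MV(X,X,D) = g^2\MV(B,B,D) + 2fg\,\MV(B,C,D) + f^2\MV(C,C,D) \ge 2fg\,\MV(B,C,D),
\]
where the last inequality simply drops the two nonnegative terms $g^2\MV(B,B,D)$ and $f^2\MV(C,C,D)$. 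Combining the displays yields $4f^2g^2 \ge 2fg\,\MV(A,A,D)\MV(B,C,D)$, and dividing by $2fg > 0$ gives \eqref{square}.

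The step I expect to require the most care, and the one that explains the word \emph{algebraic but not black-box}, is the choice of the weights $g,f$ in the combination $X = gB+fC$: these particular weights are what make $\MV(A,X,D)$ collapse to $2fg$ and thereby produce the sharp constant $2$, and the argument crucially exploits the \emph{nonnegativity} of the mixed volumes $\MV(B,B,D)$ and $\MV(C,C,D)$, which is not a consequence of the linearized inequalities \eqref{e:af} alone. Since in the setting of Lemma~\ref{lem:square} every body is full-dimensional, the positivity $f,g>0$ is automatic and no degenerate cases arise; for the fully general form of \eqref{square} with arbitrary convex bodies one recovers the statement by perturbing all bodies by $\epsilon E$ for a fixed full-dimensional body $E$ and letting $\epsilon \to 0$, using the continuity of mixed volumes.
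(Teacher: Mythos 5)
Your proof is correct, and the core derivation takes a genuinely different (and more streamlined) route than the paper's. The translation step is exactly right: your tuple $D$ with $K_j$ taken $p_j-2$ times is legitimate precisely because $p_j \ge 2$, and the four multiplicity counts check out, so \eqref{square} does become the lemma verbatim (the paper does the same reduction in reverse, normalizing to $p+u_1+u_2=\bm{1}$ ``for simplicity''). For \eqref{square} itself, the paper follows \cite[Lemma 5.1]{BGL} and \cite[Lemma 7.4.1]{Schneider2014}: it applies \eqref{AFI} to \emph{two} parametrized families, $(K_1+sK_3,\,K_2+tK_3,\,K')$ and $(tK_1+sK_2,\,K_3,\,K')$, obtains that both quadratic forms $At^2\pm 2Bst+Cs^2$ are non-negative for positive $s,t$, deduces the discriminant condition $B^2\le AC$, and only then discards the negative parts of $A$ and $C$ and takes square roots. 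You instead apply \eqref{AFI} \emph{once}, to $(A,\,gB+fC,\,D)$ with the weights $f=\MV(A,B,D)$, $g=\MV(A,C,D)$; this is in effect the paper's second family evaluated at the single optimal point $(s,t)=(f,g)$, with the non-negativity of the dropped terms $g^2\MV(B,B,D)$ and $f^2\MV(C,C,D)$ doing the work of the discriminant argument. Your version is shorter and avoids the messier first family entirely; what the paper's route buys in exchange is that it never divides by a mixed volume, so it applies verbatim to arbitrary compact convex sets (no perturbation needed) and yields the stronger intermediate inequality $B^2\le AC$ along the way — whereas you correctly note that your division by $2fg>0$ requires full-dimensionality, handled in general by the $\epsilon E$ limiting argument. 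One small quibble on your closing commentary: the essential non-black-box ingredient is not the non-negativity of mixed volumes (which is built into the black-box formulation, where the variables are logarithms), but the fact that you apply \eqref{AFI} to the \emph{new} body $gB+fC$ and exploit multilinearity; this is exactly what a black-box combination of the inequalities \eqref{e:af} cannot do, as Corollary~\ref{cor:AF_notfull} makes precise.
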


\begin{proof}
This result appears in \cite[Lemma 5.1]{BGL}. For the sake of completeness we outline an argument which also appears in the proof of \cite[Lemma 7.4.1]{Schneider2014}. For simplicity we assume that $u_1=e_1-e_3$, $u_2=e_2-e_3$, and 
$p+u_1+u_2={\bm 1}$. Then, in the standard notation, the above statement becomes 
\begin{equation}\label{e:square}
\MV(K_1,K_2,K')\MV(K_3,K_3,K')\leq 2\MV(K_1,K_3,K')\MV(K_2,K_3,K'),
\end{equation}
where $K'$ denotes the $(d-2)$-tuple $(K_3,\dots, K_d)$.
Consider a family of $d$-tuples of convex bodies $(K_1+sK_3,K_2+tK_3,K')$ for positive real $s,t$. 
It follows by the Aleksandrov-Fenchel inequality applied to this tuple that the quadratic form 
\begin{align*}
At^2+2Bst+Cs^2,\quad\text{where}\quad
A&=\MV(K_1,K_3,K')^2-\MV(K_1,K_1,K')\MV(K_3,K_3,K')\\
B&=\MV(K_1,K_2,K')\MV(K_3,K_3,K')-\MV(K_1,K_3,K')\MV(K_2,K_3,K')\\
C&=\MV(K_2,K_3,K')^2-\MV(K_2,K_2,K')\MV(K_3,K_3,K')
\end{align*}
is non-negative for all positive $s,t$. Similarly, applying the Aleksandrov-Fenchel inequality to the tuple $(tK_1+sK_2,K_3,K')$ we see that 
the quadratic form $At^2-2Bst+Cs^2$ is non-negative for all positive $s,t$. This implies that the discriminant of both forms must be non-positive, i.e. $B^2\leq AC$. Ignoring the negative terms in $A$ and $C$, this produces:
$$\left(\MV(K_1,K_2,K')\MV(K_3,K_3,K')-\MV(K_1,K_3,K')\MV(K_2,K_3,K')\right)^2\leq \MV(K_1,K_3,K')^2\MV(K_2,K_3,K')^2.$$
Finally, taking the square root of both sides and rearranging, we obtain (\ref{e:square}).
\end{proof}

The square inequalities indeed give relations that do not follow as combinations of Aleksandrov-Fenchel inequalities as the following 
shows.

\begin{cor}
\label{cor:AF_notfull}
Let $d \in \Z_{\geq 3}$. There exist functions $f \colon \Delta_{d,d}
\to \R_{\geq 0}$ that satisfy all Aleksandrov-Fenchel relations but that
are not of the form $\MV_K$ for any $K \in (\Km_d)^d$.
\end{cor}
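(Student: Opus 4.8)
The plan is to exploit the discrepancy between the Aleksandrov--Fenchel relations and the Square Inequalities of Lemma~\ref{lem:square}: the latter hold for every genuine configuration $\MV_K$, so any function that satisfies all Aleksandrov--Fenchel relations yet violates a single square inequality cannot be of the form $\MV_K$. Concretely, I would produce such a witness by placing a single ``spike'' at the barycenter $\mathbf 1=(1,\dots,1)\in\Delta_{d,d}$. Define $\mv\in\R^{\Delta_{d,d}}$ by $\mv(\mathbf 1)=2$ and $\mv(p)=0$ for all $p\neq\mathbf 1$, and set $f\coloneqq 2^{\mv}$, i.e.\ $f(\mathbf 1)=4$ and $f\equiv 1$ on $\Delta_{d,d}\setminus\{\mathbf 1\}$. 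Since the Aleksandrov--Fenchel relations for $f$ are equivalent to the membership $\mv\in\AFC_d$, the task splits into two checks: that $\mv\in\AFC_d$, and that $f$ fails one square inequality.

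For the first check I would verify the defining inequalities of $\AFC_d$ directly. The vertex conditions are immediate, as $\mathbf 1$ is not among $de_1,\dots,de_d$, so $\mv$ vanishes at every vertex. For the concavity conditions $2\mv(p)\ge \mv(p+e_i-e_j)+\mv(p-e_i+e_j)$, I would distinguish three cases according to where $\mathbf 1$ sits in the triple $\{p-e_i+e_j,\,p,\,p+e_i-e_j\}$. If $\mathbf 1$ is the midpoint $p$, the inequality reads $4\ge 0$; if $\mathbf 1$ appears nowhere, it reads $0\ge 0$. The only delicate case is $\mathbf 1$ being an endpoint; here the key observation is that $\mathbf 1$ can never be an endpoint of a valid triple inside $\Delta_{d,d}$, because the opposite endpoint would be $\mathbf 1\mp 2(e_i-e_j)$, which has a coordinate equal to $-1$ and hence lies outside $\Delta_{d,d}$; thus such triples impose no constraint. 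This is the main (and essentially only) subtle point of the argument, and it is exactly what makes the spike admissible.

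For the second check I would select the square of Lemma~\ref{lem:square} with $i_1=1$, $i_2=2$, $j=3$ (this is where $d\ge 3$ enters), based at $p=\mathbf 1-(e_1-e_3)-(e_2-e_3)=(0,0,3,1,\dots,1)$, which satisfies $p_j=3\ge 2$. Its four corners are $p$, $p+u_1$, $p+u_2$, and $p+u_1+u_2=\mathbf 1$, of which only the last is the spike. The square inequality would require $f(p)\,f(\mathbf 1)\le 2\,f(p+u_1)\,f(p+u_2)$, i.e.\ $4\le 2$, which is false. By Lemma~\ref{lem:square} every configuration $\MV_K$ with $K\in(\Km_d)^d$ satisfies this inequality, so $f$ is not of the form $\MV_K$, while $\mv\in\AFC_d$ shows that $f$ obeys all Aleksandrov--Fenchel relations and $f\geq 0$. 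Replacing the spike value $\mv(\mathbf 1)=2$ by any real number $>1$ yields a whole family of such witnesses, confirming that the square inequalities are genuinely stronger than the black-box Aleksandrov--Fenchel relations.
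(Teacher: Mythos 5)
Your proposal is correct and is essentially the paper's own proof: the paper also places a spike at $\bm{1}$ over the constant function $1$ (with $f(\bm{1})=3$ rather than $4$), notes that the Aleksandrov--Fenchel relations cannot constrain the spike because $\bm{1}$ can only appear as the midpoint of a valid triple $p,\,p\pm(e_i-e_j)$ in $\Delta_{d,d}$, and then violates the same square inequality of Lemma~\ref{lem:square} on a unit square having $\bm{1}$ and a point with an entry $3$ as diagonal corners. Your write-up in fact spells out the endpoint subtlety that the paper dismisses as ``easy to verify,'' and correctly identifies that any spike value exceeding $2$ works.
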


\begin{proof}
We will explicitly construct one such function $f$.
Set $f(\bm{1})=3$ and $f(p)=1$ for all $\bm{1} \neq p \in \Delta_{d,d}$.
It is easy to verify that $f$ satisfies all Aleksandrov-Fenchel relations.
However, as $3 = f(3,0,0,1\dots,1)f(1,1,1,1,\dots,1) > 
2 f(2,1,0,1\dots,1)f(2,0,1,1,\dots,1) = 2$, Lemma~\ref{lem:square}
shows that there exists no $K \in (\Km_d)^d$ that satisfies $\MV_K=f$.
\end{proof} 

For our later purposes we need a slight generalization of Lemma~\ref{lem:square}
that can be obtained by combining different square inequalities.
It is convenient to introduce the following notation. Consider a subset $I\subset [d]$ and an element $j\in[d]\setminus I$.
Denote
$$u_{I,j}=\sum_{i\in I}(e_i-e_j).$$
When $I=\{i\}$ we write $u_{i,j}$ for $u_{\{i\},j}=e_i-e_j$.

\begin{lem}[Generalized Square Inequalities]
\label{gen_square}
Let $K \in (\Km_d)^d$ be a $d$-tuple of $d$-dimensional convex bodies. Let $I\subset[d]$ and $i,j\in[d]\setminus I$. Then
\begin{align*}
\MV_K(p)\MV_K(p+u_{I,j}+u_{i,j}) 
\leq 
2^{|I|} \MV_K(p+u_{I,j})\MV_K(p+u_{i,j}).
\end{align*}
for  any $p \in \Delta_{d,d}$ satisfying $p_j > |I|$.
\end{lem}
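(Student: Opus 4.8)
The plan is to reduce everything to the basic Square Inequality (Lemma~\ref{lem:square}) by tiling the rectangle with corners $p$, $p+u_{I,j}$, $p+u_{i,j}$, $p+u_{I,j}+u_{i,j}$ by $|I|$ unit squares and chaining the resulting inequalities together. Write $I=\{i_1,\dots,i_k\}$ with $k=|I|$, and set $I_s=\{i_1,\dots,i_s\}$ for $0\le s\le k$, so that $u_{I_0,j}=0$ and $u_{I_k,j}=u_{I,j}$. First I would introduce the grid of lattice points
\[
A_s=p+u_{I_s,j},\qquad B_s=p+u_{I_s,j}+u_{i,j}\qquad(0\le s\le k),
\]
whose four extreme members $A_0=p$, $A_k=p+u_{I,j}$, $B_0=p+u_{i,j}$, and $B_k=p+u_{I,j}+u_{i,j}$ are exactly the corners appearing in the statement.

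Next I would check that each small square is admissible for Lemma~\ref{lem:square}. The only coordinate that ever decreases along these moves is the $j$-th one: the step $A_s\to A_{s+1}$ uses $u_{i_{s+1},j}=e_{i_{s+1}}-e_j$, and the step $A_s\to B_s$ uses $u_{i,j}=e_i-e_j$. Since $i_{s+1}\in I$ while $i,j\notin I$ and $i\ne j$, the indices $i_{s+1},i,j$ are pairwise distinct, as required. The base point of the $s$-th square is $A_s$, whose $j$-th coordinate is $p_j-s$; the hypothesis $p_j>|I|=k$ gives $(A_s)_j=p_j-s\ge p_j-(k-1)\ge 2$ for all $0\le s\le k-1$, which is precisely the condition $p_j\ge 2$ needed to apply Lemma~\ref{lem:square}, and it simultaneously keeps every $j$-coordinate nonnegative (the smallest being $(B_k)_j=p_j-k-1\ge 0$). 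All other coordinates only increase, so all $A_s,B_s$ indeed lie in $\Delta_{d,d}$.

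With the hypotheses verified, I would apply Lemma~\ref{lem:square} to the square with base $A_s$ and opposite corner $B_{s+1}=A_s+u_{i_{s+1},j}+u_{i,j}$, which in logarithmic form (cf.\ \eqref{log:square}) reads
\[
\mv_K(A_s)+\mv_K(B_{s+1})\le 1+\mv_K(A_{s+1})+\mv_K(B_s),\qquad 0\le s\le k-1.
\]
Summing these $k$ inequalities, the interior terms $\mv_K(A_1),\dots,\mv_K(A_{k-1})$ and $\mv_K(B_1),\dots,\mv_K(B_{k-1})$ telescope away, leaving
\[
\mv_K(A_0)+\mv_K(B_k)\le k+\mv_K(A_k)+\mv_K(B_0).
\]
Substituting the corner values $A_0=p$, $B_k=p+u_{I,j}+u_{i,j}$, $A_k=p+u_{I,j}$, $B_0=p+u_{i,j}$ and exponentiating with base $2$ yields exactly
\[
\MV_K(p)\MV_K(p+u_{I,j}+u_{i,j})\le 2^{|I|}\MV_K(p+u_{I,j})\MV_K(p+u_{i,j}).
\]

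I do not anticipate a serious obstacle: once the grid is set up, the argument is essentially bookkeeping via a telescoping sum. The one point demanding care is the feasibility check, namely guaranteeing that the $j$-coordinate remains at least $2$ at the base of every unit square so that Lemma~\ref{lem:square} genuinely applies to each tile; this is exactly where the stronger hypothesis $p_j>|I|$, rather than merely $p_j\ge 2$, is consumed. An equivalent but slightly longer alternative would be an induction on $|I|$, peeling off one elementary direction $e_{i_k}-e_j$ at a time and combining a single Square Inequality with the inductive hypothesis; the direct telescoping seems cleaner and makes the appearance of the factor $2^{|I|}$ transparent.
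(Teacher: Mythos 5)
Your proof is correct and is essentially the paper's argument: the paper proves the lemma by induction on $|I|$, peeling off one element $k\in I$ and multiplying the inductive hypothesis by a single application of Lemma~\ref{lem:square} at $p+u_{I\setminus\{k\},j}$, which when unrolled is exactly your telescoping chain of $|I|$ unit-square inequalities (you even note this induction as the equivalent alternative). Your feasibility check that $(A_s)_j=p_j-s\ge 2$ for $0\le s\le |I|-1$ is a welcome bit of extra care that the paper leaves implicit.
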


\begin{proof}
We will prove the statement by induction on $|I|$. 
Note that for $|I|=1$ the statement is given by Lemma~\ref{lem:square}. 
Assume $|I|>1$. Pick $k\in I$ and let $I'=I\setminus\{k\}$. By the induction hypothesis
\begin{align*}
\MV_K(p)\MV_K(p+u_{I',j}+u_{i,j}) 
\leq 
2^{|I'|} \MV_K(p+u_{I',j})\MV_K(p+u_{i,j}).
\end{align*}
Applying Lemma~\ref{lem:square} where we replace $p$ by $p+u_{I',j}$ and set $u_1=u_{i,j}$ and $u_2=u_{k,j}$, we obtain
\begin{align*}
\MV_K(p+u_{I',j})\MV_K(p+u_{I',j}+u_{i,j}+u_{k,j}) \leq 2\MV_K(p+u_{I',j}+u_{i,j})\MV_K(p+u_{I',j}+u_{k,j}).
\end{align*}
Multiplying the above two inequalities and noting that $u_{I',j}+u_{k,j}=u_{I,j}$ we obtain the claim.
\end{proof}

The following lemma shows that the functions $\MV_K\in\R^{\Delta_{d,d}}$ 
satisfy certain weak $\log$-concavity relations 
in any direction of the form $u_{I,j}$ for $I\subset [d]$ and $j\in[d]\setminus I$.

\begin{lem}
\label{conc_dir}
Let  $K \in (\Km_d)^d$ be a $d$-tuple of $d$-dimensional convex bodies.
Let $I\subset[d]$ and $j\in[d]\setminus I$. Then
$$\MV_K(p+ku_{I,j})^{\frac{l}{k+l}}\MV_K(p-lu_{I,j})^{\frac{k}{k+l}}\leq 2^{kl{|I|\choose 2}}\MV_K(p)$$
for any $k,l\in\N$ and $p\in \Delta_{d,d}$ satisfying $p+ku_{I,j}, p-lu_{I,j}\in\Delta_{d,d}$.
\end{lem}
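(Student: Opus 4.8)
The plan is to pass to logarithms and recognize the assertion as an instance of the weighted concavity inequality in Remark~\ref{r:restate}, applied to the one-dimensional sequence obtained by restricting $\mv_K$ to the discrete line through $p$ in direction $u_{I,j}$. Writing $s=|I|$ and taking $\log$ (base~$2$), the claimed inequality reads
\[
\frac{k}{k+l}\,\mv_K(p-lu_{I,j})+\frac{l}{k+l}\,\mv_K(p+ku_{I,j})\le \mv_K(p)+kl\binom{s}{2}.
\]
Thus it suffices to prove the elementary three-term \emph{weak concavity relation} $\tfrac12\,\mv_K(q-u_{I,j})+\tfrac12\,\mv_K(q+u_{I,j})\le \mv_K(q)+\binom{s}{2}$ for every $q$ with $q\pm u_{I,j}\in\Delta_{d,d}$, and then to feed the sequence $r_t:=\mv_K(p+tu_{I,j})$ into Remark~\ref{r:restate} with constant $C=\binom{s}{2}$. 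Note that the inequalities of Lemma~\ref{lem:concavity_along_line} are derived by non-negative combinations of the weak concavity relations and do not actually use non-negativity of the $r_t$, so they apply here even though $\mv_K$ may take negative values on $(\Km_d)^d$.

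Equivalently, in multiplicative form I must establish $\MV_K(q-u_{I,j})\,\MV_K(q+u_{I,j})\le 2^{\,s(s-1)}\,\MV_K(q)^2$, and I will do this by induction on $s=|I|$. For $s=1$ we have $u_{I,j}=e_i-e_j$ and the inequality is exactly the Aleksandrov--Fenchel inequality of Theorem~\ref{thm:af}. For the inductive step write $I=I'\sqcup\{i\}$ with $|I'|=s-1$, and set $a:=u_{I',j}$ and $b:=u_{i,j}=e_i-e_j$, so that $u_{I,j}=a+b$. Abbreviating $f(x,y):=\MV_K(q+xa+yb)$, I will multiply together the following four inequalities: the generalized square inequality (Lemma~\ref{gen_square}) applied at $q$ and again at $q-a-b$, giving $f(0,0)f(1,1)\le 2^{s-1}f(1,0)f(0,1)$ and $f(-1,-1)f(0,0)\le 2^{s-1}f(0,-1)f(-1,0)$; the inductive three-term relation in direction $a$, giving $f(-1,0)f(1,0)\le 2^{(s-1)(s-2)}f(0,0)^2$; and the Aleksandrov--Fenchel inequality in direction $b$, giving $f(0,-1)f(0,1)\le f(0,0)^2$. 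After multiplying, the four ``edge'' factors $f(\pm1,0)$ and $f(0,\pm1)$ cancel against their copies on the right, one power of $f(0,0)^2$ cancels, and the exponent collapses as $2(s-1)+(s-1)(s-2)=s(s-1)$, yielding $f(-1,-1)f(1,1)\le 2^{s(s-1)}f(0,0)^2$, which is the desired relation at $q$.

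Finally I will assemble the pieces. Since $p-lu_{I,j}$ and $p+ku_{I,j}$ lie in $\Delta_{d,d}$, so does every intermediate point $p+tu_{I,j}$ for $-l\le t\le k$ (each coordinate stays non-negative and the coordinate sum is preserved, as $u_{I,j}$ has zero coordinate sum), and the three-term relation then holds at every interior index of the sequence $r_t$; Remark~\ref{r:restate} delivers precisely the logarithmic inequality above, and exponentiating gives the lemma. I expect the genuine work to be concentrated in the inductive step: choosing the four inequalities so that the edge terms cancel and the exponent telescopes to $s(s-1)=2\binom{s}{2}$, and, secondarily, verifying the domain hypotheses — that all nine grid points $q+xa+yb$ lie in $\Delta_{d,d}$ and that the $j$-th coordinate is large enough (namely $q_j\ge s$, which is already forced by $q+u_{I,j}\in\Delta_{d,d}$) for the two applications of Lemma~\ref{gen_square} to be legitimate.
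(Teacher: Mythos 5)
Your proposal is correct and follows essentially the same route as the paper: reduce to the three-term case $k=l=1$ via Remark~\ref{r:restate} (your observation that Lemma~\ref{lem:concavity_along_line} never actually uses non-negativity of the $r_t$ is a point the paper glosses over, and is needed here since $\mv_K$ may be negative on $(\Km_d)^d$), then induct on $|I|$ using the decomposition $u_{I,j}=u_{I',j}+u_{i,j}$, Lemma~\ref{gen_square}, and Aleksandrov--Fenchel. The only difference is cosmetic: the paper's inductive step multiplies three inequalities (the induction hypothesis and AF applied at shifted points, plus one application of Lemma~\ref{gen_square}), while you multiply four in a pattern symmetric about $q$ (two applications of Lemma~\ref{gen_square} plus IH and AF centered at $q$); both telescope to the same exponent $\binom{|I'|}{2}+|I'|=\binom{|I|}{2}$.
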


\begin{proof}
We will prove the special case of $k=l=1$ and the general case follows from (\ref{e:restate-iii}) in Remark~\ref{r:restate}.
The proof of the special case is again via induction on $|I|$. For $|I|=1$ we recover the Aleksandrov-Fenchel inequality. 

Assume $|I|>1$. Pick $i\in I$ and let $I'=I\setminus\{i\}$. Then $u_{I,j}=u_{i,j}+u_{I',j}$.
By the induction hypothesis, replacing $p$ by $p+u_{i,j}$, we have
$$\MV_K(p+u_{i,j}+u_{I',j})^{\frac{1}{2}}\MV_K(p+u_{i,j}-u_{I',j})^{\frac{1}{2}}\leq 2^{{|I'|\choose 2}}\MV_K(p+u_{i,j}).$$
Furthermore, by the Aleksandrov-Fenchel inequality we have
$$\MV_K(p+u_{i,j}-u_{I',j})^{\frac{1}{2}}\MV_K(p-u_{i,j}-u_{I',j})^{\frac{1}{2}}\leq \MV_K(p-u_{I',j}).$$
Finally, by Lemma~\ref{gen_square}, where we replace $I$ by $I'$ and $p$ by $p-u_{I',j}$, we have
\begin{align*}
\MV_K(p-u_{I',j})\MV_K(p+u_{i,j}) 
\leq 
2^{|I'|} \MV_K(p)\MV_K(p+u_{i,j}-u_{I',j}).
\end{align*}
It remains to multiply the three inequalities above and note that ${|I'|\choose 2}+|I'|={|I|\choose 2}$.
\end{proof}

Our next result (Theorem~\ref{thm:steps}) provides a method for bounding mixed volumes in directions of the form $\sum_{i\in I} e_{i}-\sum_{j\in J} e_{j}$ 
for some disjoint subsets $I, J\subset[d]$ with $|I|=|J|$. Similar to above we introduce special notation for such directions:
$$u_{I,J}=\sum_{i\in I} e_{i}-\sum_{j\in J} e_{j}.$$
We will first illustrate the statement and the proof of Theorem~\ref{thm:steps} with an example.

\begin{ex}\label{ex:3-d example} Let $K \in (\Km_{6,1})^6$ be a $6$-tuple of
$6$-dimensional convex bodies of volume at least $1$. We will show that 
\begin{equation}\label{e:bound}
\mv_K(2,2,2,0,0,0)\leq 2\mv_K(\bm{1})+6, 
\end{equation}
where $\bm{1}=(1,1,1,1,1,1)$. First, by Lemma~\ref{lem:concavity_simplices}, in logarithmic notation we have
\begin{equation}\label{e:first-a}
\frac{1}{3}\left(\mv_K(1,1,1,3,0,0)+\mv_K(1,1,1,0,3,0)+\mv_K(1,1,1,0,0,3)\right) \leq \mv_K(\bm{1}).
\end{equation}

The corresponding 2-simplex is depicted in blue in Figure~\ref{fig:3-d example}. Now, for each of the summands in the left hand side of 
(\ref{e:first-a}), we use the weak $\log$-concavity relations
in the directions $(1,1,1,-3,0,0)$, $(1,1,1,0,-3,0)$, and $(1,1,1,0,0,-3)$
given by Lemma~\ref{conc_dir} and obtain 
\begin{align*}
\frac{1}{2}\mv_K(2,2,2,0,0,0)+\frac{1}{2}\mv_K(0,0,0,6,0,0) & \leq 3+\mv_K(1,1,1,3,0,0)\\
\frac{1}{2}\mv_K(2,2,2,0,0,0)+\frac{1}{2}\mv_K(0,0,0,0,6,0) & \leq 3+\mv_K(1,1,1,0,3,0)\\
\frac{1}{2}\mv_K(2,2,2,0,0,0)+\frac{1}{2}\mv_K(0,0,0,0,0,6) & \leq 3+\mv_K(1,1,1,0,0,3).
\end{align*}
In Figure~\ref{fig:3-d example} these directions are shown in green.
These inequalities, together with (\ref{e:first-a}), provide the bound (\ref{e:bound}), as $\mv_K(0,0,0,6,0,0)$, $\mv_K(0,0,0,0,6,0)$, and $\mv_K(0,0,0,0,0,6)$ are non-negative.
\end{ex}

\begin{figure}
\includegraphics[scale=0.45]{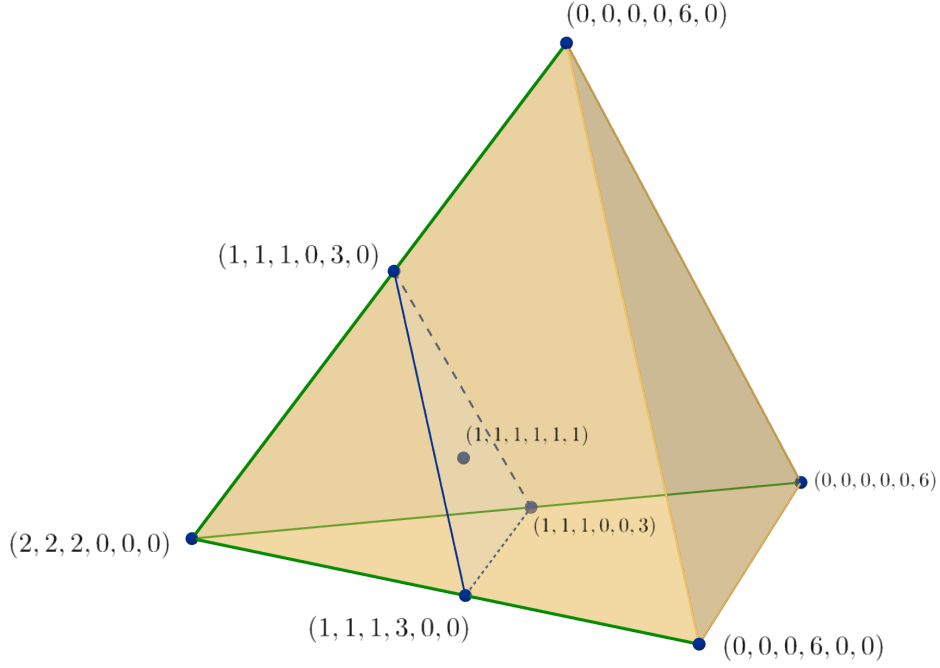}
\caption{Bounding $\mv_K(2,2,2,0,0,0)$ in terms of $\mv_K(1,1,1,1,1,1)$. We use the fact that all points that we draw live inside 
the $3$-dimensional slice 
$\{p \in \Delta_{6,6} \colon p_1=p_2=p_3\}$ of the 
$5$-dimensional simplex $\Delta_{6,6}$.}
\label{fig:3-d example}
\end{figure}

\begin{thm}
\label{thm:steps}
Let $K \in (\Km_{d,1})^d$ be a $d$-tuple of $d$-dimensional convex bodies
of volume at least~$1$. Let
$I,J\subset[d]$ be disjoint subsets with $|I|=|J|$.
Then 
$$
\mv_K(p+u_{I,J}) \leq 
\frac{\mu+1}{\mu} 
\mv_K(p) + (\mu+1){\lfloor d/2\rfloor\choose 2}, 
$$
for any $p \in \Delta_{d,d}$ such that $p\pm u_{I,J}\in\Delta_{d,d}$, 
where $\mu = \min(p_{i} : i\in I)$.
\end{thm}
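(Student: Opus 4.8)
The plan is to bound $\mv_K(p+u_{I,J})$ from above by comparing it, separately for each $j\in J$, with an auxiliary lattice point from which $p+u_{I,J}$ is reached by one step of a single-apex move $u_{I,j}$, and whose average over $j\in J$ is exactly $p$. Write $s:=|I|=|J|$ and enumerate $J=\{j_1<\dots<j_s\}$. For $\ell\in[s]$ I set
\[
  m_\ell \;:=\; p+u_{I,J}-u_{I,j_\ell} \;=\; p+(s-1)e_{j_\ell}-\sum_{j\in J\setminus\{j_\ell\}}e_j .
\]
The first thing I would verify is the structural identity $m_\ell=s\,e_{j_\ell}+t$, where $t$ is the fixed vector with $t_j=p_j-1$ for $j\in J$ and $t_r=p_r$ for $r\notin J$. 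Since $p+u_{I,J}\in\Delta_{d,d}$ forces $p_j\ge 1$ for $j\in J$, we have $t\in\Z_{\ge 0}^d$ with $t_1+\dots+t_d=d-s$, so $m_1,\dots,m_s$ are precisely the vertices of a copy of $\Delta_{s,s}$ inside $\Delta_{d,d}$ in the sense of Lemma~\ref{lem:concavity_simplices}, and $p=\sum_{\ell}e_{j_\ell}+t$ is its barycenter. Lemma~\ref{lem:concavity_simplices} (with $k=s$ and all barycentric coordinates equal to $1$) then gives
\[
  \mv_K(m_1)+\dots+\mv_K(m_s)\;\le\; s\,\mv_K(p).
\]

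Next I would invoke the weak log-concavity of Lemma~\ref{conc_dir} along the single-apex direction $u_{I,j_\ell}$, based at $m_\ell$, with exponents $k=1$ and $l=\mu$. This is legitimate because $m_\ell+u_{I,j_\ell}=p+u_{I,J}\in\Delta_{d,d}$ and $m_\ell-\mu\,u_{I,j_\ell}\in\Delta_{d,d}$ (the latter because the $I$-coordinates of $m_\ell$ equal those of $p$, which are at least $\mu$). Discarding the nonnegative summand $\tfrac{1}{\mu+1}\mv_K(m_\ell-\mu\,u_{I,j_\ell})$ from the left-hand side leaves, for every $\ell\in[s]$,
\[
  \frac{\mu}{\mu+1}\,\mv_K(p+u_{I,J})\;\le\;\mv_K(m_\ell)+\mu\binom{s}{2}.
\]

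Finally I would sum this last inequality over $\ell\in[s]$, substitute the bound $\sum_\ell\mv_K(m_\ell)\le s\,\mv_K(p)$ from the first step, and divide by $\tfrac{s\mu}{\mu+1}$ to obtain
\[
  \mv_K(p+u_{I,J})\;\le\;\frac{\mu+1}{\mu}\,\mv_K(p)+(\mu+1)\binom{s}{2};
\]
the disjointness of $I$ and $J$ gives $s\le\lfloor d/2\rfloor$, hence $\binom{s}{2}\le\binom{\lfloor d/2\rfloor}{2}$ and the claimed bound. I expect the only genuinely creative point to be the choice of the points $m_\ell$: they must simultaneously reach $p+u_{I,J}$ in a single $u_{I,j_\ell}$-step, so that Lemma~\ref{conc_dir} applies with the sharp exponent $l=\mu$ responsible for the factor $\tfrac{\mu+1}{\mu}$, and form a $\Delta_{s,s}$-copy with barycenter $p$, so that Lemma~\ref{lem:concavity_simplices} applies. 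The key is to concentrate the $J$-mass only partially toward $j_\ell$ (via $t_j=p_j-1$), rather than fully as in the illustrative example above, which succeeds only in the special case $p_j=1$ for all $j\in J$; everything else is bookkeeping.
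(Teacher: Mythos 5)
Your proof is correct and takes essentially the same route as the paper: your auxiliary points $m_\ell = p+u_{I,J}-u_{I,j_\ell}$ are exactly the paper's simplex vertices $b_j = p-u_{J\setminus\{j\},j}$ (since $u_{I,J}-u_{I,j} = -u_{J\setminus\{j\},j}$), and you combine Lemma~\ref{lem:concavity_simplices} at the barycenter $p$ with Lemma~\ref{conc_dir} applied with $(k,l)=(1,\mu)$, dropping the nonnegative term, precisely as in the paper. The only cosmetic difference is that you sum the $s$ weak-concavity inequalities and divide, rather than substituting each lower bound into the barycenter inequality, and you spell out the $\Delta_{s,s}$-copy verification that the paper leaves implicit.
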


\begin{proof} 
First we write $p$ as the barycenter of a simplex with vertices $b_j=p-u_{J\setminus\{j\},j}$, for $j\in J$.
Applying Lemma~\ref{lem:concavity_simplices} we get
\begin{align}
\label{e:step}
\frac{1}{|J|}\sum_{j\in J} \mv_K(p-u_{J\setminus\{j\},j})\leq \mv_K(p).
\end{align}
In order to establish the required bound for $\mv_K(p+u_{I,J})$ we estimate each summand $\mv_K(p-u_{J\setminus\{j\},j})$ from below using the weak concavity relations along $u_{I,j}$ given by Lemma~\ref{conc_dir}. Indeed, applying  Lemma~\ref{conc_dir}
with $p$ replaced by $p-u_{J\setminus\{j\},j}$ and $(k,l)=(1,\mu)$, in the logarithmic notation we get
 $$
 \frac{\mu}{\mu+1}\mv_K(p-u_{J\setminus\{j\},j}+u_{I,j})+ \frac{1}{\mu+1}\mv_K(p-u_{J\setminus\{j\},j}-\mu u_{I,j})\leq \mu{|I|\choose 2}+\mv_K(p-u_{J\setminus\{j\},j}).
 $$
Note that $-u_{J\setminus\{j\},j}+u_{I,j}=u_{I,J}$. Also, since we assumed that
the $K_i$ have volume at least~1, the second term in the left-hand side is non-negative and, hence, can be dropped. We thus obtain
 $$
 \frac{\mu}{\mu+1}\mv_K(p+u_{I,J})- \mu{|I|\choose 2}\leq\mv_K(p-u_{J\setminus\{j\},j}).
 $$
Plugging these estimates into \eqref{e:step} yields 
\begin{align*}
\mv_K(p+u_{I,J}) \leq 
\frac{\mu+1}{\mu} 
\mv_K(p) + (\mu+1){|I|\choose 2}.
\end{align*}
Finally, using  $|I|\leq \lfloor d/2\rfloor$ we get the claim.
\end{proof}

\begin{rem}
One may be led to think that functions $\MV_K$ satisfy certain
weak log-concavity relations in any direction
$u_{I,J}$.  Indeed, the bound of 
Theorem~\ref{thm:steps} is exactly what one would get from
relations of the form
\begin{align*}
\frac{\mu}{\mu+1}\mv_K(p+u_{I,J})
+ \frac{1}{\mu+1}\mv_K(p-u_{I,J}) \leq  
\mv_K(p) + C. 
\end{align*} 
for $C={|I| \choose 2}$.
In Proposition~\ref{P:11-1-1} below we show that for $|I|=|J|=2$
we indeed almost have such relations but for a slightly larger
constant $C = 2$.
However, according to our computations, for 
 $|I|=|J|>2$ our methods cannot show such weak
concavity relations along $u_{I,J}$ anymore no matter the constant $C$. 
\end{rem}

\begin{prop}\label{P:11-1-1} 
Let $K \in (\Km_d)^d$ be a $d$-tuple of $d$-dimensional convex bodies and
$I,J$ disjoint subsets of $[d]$ with $|I|=|J|=2$.
Then 
$$\MV_K(p+u_{I,J})\MV_K(p-u_{I,J})\leq 2^{4}\MV_K(p)^2,$$
for any $p \in \Delta_{d,d}$ satisfying 
$p\pm u_{I,J}\in \Delta_{d,d}$.
\end{prop}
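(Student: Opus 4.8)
The plan is to take logarithms and prove the equivalent additive inequality
\[
\mv_K(p+u_{I,J}) + \mv_K(p-u_{I,J}) \le 2\,\mv_K(p) + 4,
\]
where, without loss of generality, $I=\{1,2\}$ and $J=\{3,4\}$, so that $u_{I,J}=(e_1-e_3)+(e_2-e_4)$ is a sum of two roots with \emph{disjoint} support. This disjointness is the whole difficulty: neither \eqref{e:af} nor Lemma~\ref{lem:square} acts along $u_{I,J}$ itself, and --- unlike for a single root --- there is no concavity relation bounding $\mv_K(p+u_{I,J})$ from above directly. It is exactly this feature that makes the statement fail for $|I|=|J|>2$ and only borderline true here.

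My first step would be to peel off each outer term with one application of the square inequality. Applying Lemma~\ref{lem:square} at the shifted point $p+e_3-e_4$ with sink $3$ and sources $1,2$ (legitimate since $p\pm u_{I,J}\in\Delta_{d,d}$ forces $(p+e_3-e_4)_3\ge 2$) gives
\[
\mv_K(p+u_{I,J}) + \mv_K(p+e_3-e_4) \le 1 + \mv_K(p+e_1-e_4) + \mv_K(p+e_2-e_4),
\]
and symmetrically, applying it at $p+e_1-e_2$ with sink $1$ and sources $3,4$ bounds $\mv_K(p-u_{I,J})$ by $\mv_K(p+e_3-e_2)$, $\mv_K(p+e_4-e_2)$ and $\mv_K(p+e_1-e_2)$. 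I would use also the two companion inequalities obtained by swapping the two sinks (based at $p+e_4-e_3$ and at $p+e_2-e_1$), so that after summation the set of root directions on the right is symmetric under negation. At this point the problem is reduced to a relation among the ``root values'' $\mv_K(p+e_a-e_b)$ and $\mv_K(p)$ alone: here \eqref{e:af} supplies the bounds $\mv_K(p+r)+\mv_K(p-r)\le 2\mv_K(p)$, while the four square inequalities supply the additive constant $4=\log 2^4$.

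The crux is that this final bookkeeping must be carried out \emph{homogeneously}, i.e.\ without ever discarding a nonnegative term $\mv_K(\cdot)\ge 0$. Discarding terms is not scale invariant and loses a factor: it produces only the weaker bound with $4\,\mv_K(p)$ in place of $2\,\mv_K(p)$ --- precisely what one gets by invoking Theorem~\ref{thm:steps} for $u_{I,J}$ and for $u_{J,I}$ and adding. Achieving the sharp coefficient $2$ forces an exact cancellation among the root values, which is delicate because the four mixed volumes in the statement carry \emph{different} tails and so cannot be compared through a single Aleksandrov--Fenchel form.

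I expect this cancellation to be the main obstacle. It cannot be obtained by a short multiplicative chain imitating the proof of Lemma~\ref{conc_dir}: the step there played by the generalized square inequality (Lemma~\ref{gen_square}) would here have to be replaced by an inequality of the shape
\[
\MV_K(p+e_1-e_3)\,\MV_K(p-e_2+e_4) \le C\,\MV_K(p)\,\MV_K(p+e_1-e_2-e_3+e_4),
\]
which compares the two diagonals of a square whose edges have disjoint support; such a ``disjoint-support square'' inequality is false for every constant $C$ (it already fails for suitably degenerate bodies). One is therefore led either to assemble a larger balanced nonnegative combination of square inequalities at the various root-shifts of $p$ together with \eqref{e:af} --- whose existence, being a finite linear-programming feasibility question, can be certified in the computer-assisted manner used elsewhere in the paper --- or to run the quadratic-form argument behind Lemma~\ref{lem:square} directly on a two-parameter family such as $(K_1+sK_3,\,K_2+tK_4,\dots)$, tuning the discriminant inequalities so that the cross terms collapse onto $\MV_K(p)^2$ with total factor $2^4$.
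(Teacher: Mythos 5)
Your individual steps are valid instances of Lemma~\ref{lem:square}, but the proof is never completed, and the route you sketch toward completion does not close. Summing your four square inequalities leaves on the left-hand side the extra terms $\mv_K(p\pm(e_1-e_2))$ and $\mv_K(p\pm(e_3-e_4))$; after applying \eqref{e:af} to the eight root values on the right, you would need those leftover terms to be bounded \emph{below} by $4\mv_K(p)$ minus a constant, i.e.\ a reverse Aleksandrov--Fenchel inequality of the form $\MV_K(p+r)\MV_K(p-r)\ge c\,\MV_K(p)^2$. No such inequality holds for any $c>0$ (already for $d=2$, two thin boxes in orthogonal directions make the left side vanish while the right stays bounded away from zero). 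So the sentence claiming that \eqref{e:af} plus your four square inequalities finish the job is wrong; as you yourself then observe, the only thing your combination yields after discarding the leftover terms is the non-homogeneous bound with $4\mv_K(p)$. The two escape routes you offer --- an unexhibited LP certificate and an uncarried-out quadratic-form computation --- are statements of intent, not proofs. This is a genuine gap.

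Moreover, the claim steering you away from the correct argument --- that the sharp constant cannot be reached by a short multiplicative chain of square inequalities --- is false: the paper's proof is exactly such a chain, and it needs neither \eqref{e:af} nor any positivity or computer assistance. The idea you are missing is to base the square inequalities at \emph{shifted} points chosen so that the unwanted factors cancel multiplicatively. With $I=\{1,2\}$, $J=\{3,4\}$, Lemma~\ref{lem:square} at $p-e_1+e_3$ with $u_1=e_1-e_3$, $u_2=e_2-e_3$ gives
\[
\MV_K(p-e_1+e_3)\MV_K(p+e_2-e_3)\le 2\,\MV_K(p)\MV_K(p-e_1+e_2),
\]
and at $p+u_{I,J}$ with $u_1=e_3-e_1$, $u_2=e_4-e_1$ it gives
\[
\MV_K(p+u_{I,J})\MV_K(p-e_1+e_2)\le 2\,\MV_K(p+e_2-e_4)\MV_K(p+e_2-e_3).
\]
Multiplying and cancelling the positive common factors $\MV_K(p+e_2-e_3)$ and $\MV_K(p-e_1+e_2)$ yields
\[
\MV_K(p+u_{I,J})\MV_K(p-e_1+e_3)\le 4\,\MV_K(p)\MV_K(p+e_2-e_4);
\]
the same argument with indices $1\leftrightarrow 4$, $2\leftrightarrow 3$ gives $\MV_K(p-u_{I,J})\MV_K(p+e_2-e_4)\le 4\,\MV_K(p)\MV_K(p-e_1+e_3)$, and multiplying these two and cancelling $\MV_K(p-e_1+e_3)\MV_K(p+e_2-e_4)$ produces exactly $\MV_K(p+u_{I,J})\MV_K(p-u_{I,J})\le 2^4\MV_K(p)^2$. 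The ``exact cancellation'' you anticipated as the main obstacle happens among the cross terms of the squares themselves, provided the base points and sink/source patterns are chosen as above; this is entirely homogeneous, so no term is ever discarded.
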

\begin{proof} 
For simplicity, we assume $I=\{1,2\}$,  $J=\{3,4\}$. 
Applying Lemma~\ref{lem:square}
with $p$ replaced by $p-e_1+e_3$ and $u_1=e_1-e_3$, $u_2=e_2-e_3$ we get
$$\MV_K(p-e_1+e_3)\MV_K(p+e_2-e_3)\leq 2\MV_K(p)\MV_K(p-e_1+e_2).$$
Next, applying Lemma~\ref{lem:square} with $p$ replaced by $p+e_1+e_2-e_3-e_4$ and  $u_1=e_3-e_1$, $u_2=e_4-e_1$ we get
$$\MV_K(p+e_1+e_2-e_3-e_4)\MV_K(p-e_1+e_2)\leq 2\MV_K(p+e_2-e_4)\MV_K(p+e_2-e_3).$$
Multiplying the above inequalities we obtain
$$\MV_K(p+e_1+e_2-e_3-e_4)\MV_K(p-e_1+e_3)\leq 4\MV_K(p)\MV_K(p+e_2-e_4).$$
Similarly, switching the indices $1\leftrightarrow 4$ and $2\leftrightarrow 3$, we obtain
$$\MV_K(p-e_1-e_2+e_3+e_4)\MV_K(p-e_4+e_2)\leq 4\MV_K(p)\MV_K(p+e_3-e_1).$$
Finally, the product of the last two inequalities provides the result.
\end{proof}

The following is our key result regarding  bounds on mixed volumes in 
general dimension.

\begin{thm}
\label{thm:asymp_Kound}
Let $K \in (\Km_{d,1})^d$ be a $d$-tuple of $d$-dimensional convex bodies
of volume at least $1$ and 
$p \in \Delta_{d,d}$. Then one has:
\begin{align}
\label{eq:sq_pbound}
\mv_K(p) \leq \max(p)\left(\mv_K(\bm{1})+(\max(p)-1){\lfloor d/2\rfloor\choose 2}\right), 
\end{align}
Consequently, 
\begin{align*}
\mv_K(p) \leq \max(p)\mv_K(\bm{1}) + C(d), 
\end{align*}
where $C(d)$ is a constant only depending on the dimension $d$.

Furthermore, given that $\MV_K(\bm{1})=m$, one 
obtains the following bound:
\begin{align}
\label{eq:sq_minkbound}
	\Vol(\Sigma(K)) \le  2^{d(d-1){\lfloor d/2\rfloor\choose 2}} d^d\,m^d.
\end{align}
\end{thm}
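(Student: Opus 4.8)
The plan is to prove the pointwise bound \eqref{eq:sq_pbound} first, then derive the Minkowski-sum bound \eqref{eq:sq_minkbound} from it exactly as in the proof of Theorem~\ref{thm:af_Kounds}. The strategy for \eqref{eq:sq_pbound} mirrors the black-box argument for \eqref{eq:af_pbound}, but replaces the clean concavity relations from Lemma~\ref{lem:concavity_simplices} with the weak relations obtained in Theorem~\ref{thm:steps}. The key idea is that any $p\in\Delta_{d,d}$ can be reached from $\bm 1$ by a sequence of steps along directions $u_{I,J}$, and each such step is controlled by Theorem~\ref{thm:steps}.

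Concretely, I would induct on $\max(p)$. Set $s=\max(p)$ and assume without loss of generality that $p$ is sorted in decreasing order, so $p_1=s$. The point is to ``lift'' $p$ from a point $p'$ whose maximal coordinate is $s-1$. First I would locate coordinates: since $p$ has entries summing to $d$ with maximum $s$, there must exist indices whose coordinate is strictly smaller than $s$, in fact zero entries once $s$ is large; I would choose disjoint sets $I,J$ with $|I|=|J|$ so that subtracting $u_{I,J}$ from the large coordinates and adding it back to small ones produces a point $p'$ with $\max(p')=s-1$. The natural choice is $I$ indexing the coordinates equal to $s$ (or a subset of them) and $J$ indexing coordinates we wish to increase; then $p=p'+u_{I,J}$ with $\mu=\min(p'_i : i\in I)=s-1$. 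Applying Theorem~\ref{thm:steps} with this $p'$ gives
\begin{align*}
\mv_K(p)\le \frac{s}{s-1}\mv_K(p')+s\binom{\lfloor d/2\rfloor}{2},
\end{align*}
and the induction hypothesis $\mv_K(p')\le (s-1)\bigl(\mv_K(\bm 1)+(s-2)\binom{\lfloor d/2\rfloor}{2}\bigr)$ would then be substituted and simplified to yield \eqref{eq:sq_pbound}. The telescoping of the constant term is the routine algebra: the factors $\tfrac{s}{s-1}\cdot(s-1)=s$ clear the denominators and the additive constants accumulate to the claimed $\max(p)(\max(p)-1)\binom{\lfloor d/2\rfloor}{2}$.

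For the second, asymptotic part I would simply note that \eqref{eq:sq_pbound} gives $\mv_K(p)\le \max(p)\,\mv_K(\bm 1)+C(d)$ with $C(d)=d(d-1)\binom{\lfloor d/2\rfloor}{2}$ (using $\max(p)\le d$), which is the displayed consequence. Then, copying the summation step from the proof of Theorem~\ref{thm:af_Kounds}, I would write
\begin{align*}
\Vol(\Sigma(K))=\sum_{p\in\Delta_{d,d}}\binom{d}{p}2^{\mv_K(p)}\le \sum_{p\in\Delta_{d,d}}\binom{d}{p}2^{d\,\mv_K(\bm 1)+C(d)}=2^{C(d)}d^d\,m^d,
\end{align*}
using $\max(p)\le d$, $\sum_p\binom{d}{p}=d^d$, and $2^{\mv_K(\bm 1)}=\MV_K(\bm 1)=m$. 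This gives \eqref{eq:sq_minkbound} with the stated constant $2^{d(d-1)\binom{\lfloor d/2\rfloor}{2}}$.

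The main obstacle I anticipate is the inductive step in \eqref{eq:sq_pbound}: one must verify that for \emph{every} sorted $p$ with $\max(p)=s$ there genuinely exist disjoint $I,J$ realizing $p=p'+u_{I,J}$ with $\max(p')=s-1$ and $\min_{i\in I}p'_i=s-1$, so that Theorem~\ref{thm:steps} applies with exactly $\mu=s-1$. One has to decrement \emph{all} coordinates equal to $s$ simultaneously (taking $I$ to be the full set of argmax indices) and distribute the removed mass onto coordinates that remain $\le s-1$ afterwards, while keeping $I,J$ disjoint and of equal size and keeping $p'\in\Delta_{d,d}$ with nonnegative entries; checking that enough ``room'' (small coordinates) is always available, and that the resulting $\mu$ equals $s-1$ rather than something smaller, is the delicate bookkeeping. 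Everything after that is the routine constant-chasing sketched above.
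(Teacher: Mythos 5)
Your proposal is correct and takes essentially the same route as the paper: its proof telescopes Theorem~\ref{thm:steps} along an ``admissible path'' from $\bm{1}$ to $p$ whose steps are exactly your inductive step (decrement all argmax coordinates simultaneously, giving $\mu=s-1$), followed by the same summation over $\Delta_{d,d}$ with the maximizing point $(d,0,\dots,0)$. The bookkeeping you flag resolves just as you suggest: the paper's $p'$ places the removed mass on zero coordinates of $p$, and at least $|I|$ zeros always exist since the entries sum to $d$ while each of the $|I|$ argmax entries is at least $2$.
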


\begin{proof}
We will show that there is a sequence
of inequalities of the type shown in Theorem~\ref{thm:steps} that yields 
\eqref{eq:sq_pbound}. Let us, without loss of generality, assume that 
$p$ is a decreasing vector, that is $p_1\geq \dots\geq p_d$. Hence $\max(p)=p_1$.

Let us define the set of \emph{admissible vectors} 
$\mathcal{S}_p$ at a point
$p \in \Delta_{d,d}$ to be
\begin{align*}
\mathcal{S}_p \coloneqq \Big\{\sum_{i=1}^n e_{i} -\sum_{j=l+1}^{l+n}e_{j} 
\text{ for } l\geq n\geq 1, l+n\leq d 
\text{ and } n \text{ satisfying } p_1=\dots=p_n \Big\}.
\end{align*}
We claim that there is a sequence of decreasing vectors $a_1,\dots, a_{p_1}\in \Delta_{d,d}$ starting at $a_1=\bm{1}$ and ending at $a_{p_1}=p$ such that $a_{i+1}-a_i\in\mathcal{S}_{a_i}$ and, hence, 
$\max(a_i)=i$ for all $1\leq i<p_1$. We call such a sequence an {\it admissible path} from $\bm{1}$ to $p$.
The existence of such a path can be easily seen by induction on
$p_1$. If $p_1=1$ then $p = \bm{1}$ and there is nothing
to show, so let $p_1\geq 2$. 
Let $n$ be the maximal index satisfying $p_n = p_1$ and $l$
be the maximal index satisfying $p_l > 0$. Consider the 
vector 
\begin{align*}
p' = (p_1-1,\dots,p_n-1,p_{n+1},\dots,p_l,\underbrace{1,\dots,1}_{n \text{ times}},0,\dots,0).
\end{align*}
One can check that $p'\in \Delta_{d,d}$ exists and is decreasing
by construction. By the induction hypothesis 
there is an admissible path from $\bm{1}$ to $p'$ of length $p_1-1$. Moreover, $p-p'\in\mathcal{S}_{p'}$, and therefore there exists an admissible path from $\bm{1}$ to $p$ of length $p_1$. For example, Figure~\ref{fig:paths} shows all admissible paths from $\bm{1} \in 
\Delta_{6,6}$ to any decreasing point $p$ in $\Delta_{6,6}$.

Let us now show how the existence of such an admissible path implies
\eqref{eq:sq_pbound}.
Let $a_{i+1}$ and $a_i$ be two terms in an admissible path from $\bm{1}$ to $p$. By Theorem~\ref{thm:steps}
 we have 
$$
\mv_K(a_{i+1}) \leq \frac{\mu+1}{\mu} \mv_K(a_i)+(\mu+1){\lfloor d/2\rfloor\choose 2}, 
$$
where $\mu$ is the minimum of those entries of $a_i$ which increase when we pass to $a_{i+1}$. But all these entries are equal to $i$ by the construction of the admissible sequence. Hence, we can write
$$
\mv_K(a_{i+1}) \leq \frac{i+1}{i} \mv_K(a_i)+(i+1){\lfloor d/2\rfloor\choose 2}. 
$$
Applying this repeatedly we obtain
\begin{align*}
\mv_K(p)\leq\left(\frac{2}{1}\right) \left(\frac{3}{2}\right) \cdots 
\left(\frac{p_1}{p_1-1}\right)\mv_K(\bm{1})+p_1(p_1-1){\lfloor d/2\rfloor\choose 2} 
=p_1\mv_K(\bm{1})+p_1(p_1-1){\lfloor d/2\rfloor\choose 2}, 
\end{align*}
which concludes the proof of \eqref{eq:sq_pbound}. The inequality using
a constant $C(d)$ only depending on the dimension $d$ follows
directly from \eqref{eq:sq_pbound} and
the observation that $\max(p)$ is bounded by $d$. 

Assume now that $\MV_K(\bm{1})=m$. 
Combining Proposition~\ref{prop:formula_mink} with the observation
that the maximum of the bounds from \eqref{eq:sq_pbound} is attained
e.g. at $p = (d,0,\dots,0)$, one obtains
\begin{align*}
\Vol(\Sigma(K)) = \sum_{p \in \Delta_{d,d}} 
\binom{d}{p} 2^{\mv_K(p)} \leq 
\sum_{p \in \Delta_{d,d}}\binom{d}{p} 2^{\mv_K(d,0,\dots,0)}  = d^d 2^{\mv_K(d,0,\dots,0)}.
\end{align*}
Explicitly plugging in the bound from \eqref{eq:sq_pbound} for $\mv_K(d,0,\dots,0)$
yields \eqref{eq:sq_minkbound}.
\end{proof}

\begin{figure}\label{fig:paths}
\includegraphics[scale=0.7]{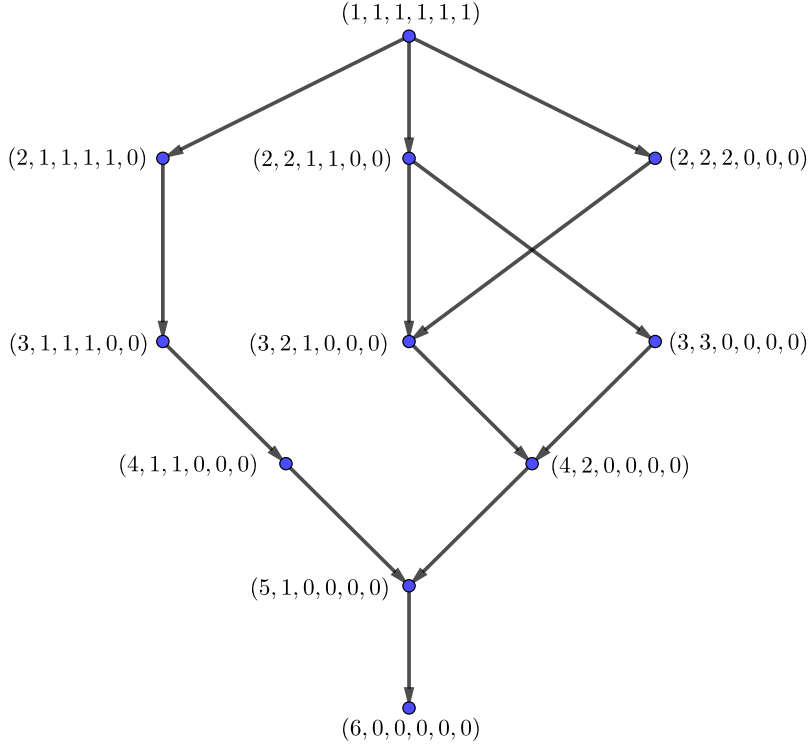}
\caption{Admissible paths from $\bm{1} \in 
\Delta_{6,6}$ to
any decreasing point $p \in \Delta_{6,6}$.}
\end{figure}

\begin{rem}
Note that the bound from Theorem~\ref{thm:asymp_Kound} shows that,
for any $p \in \Delta_{d,d}$, the maximum of $\MV_K(p)$ among all 
$d$-tuples $(\Km_{d,1})^d$ of $d$-dimensional convex bodies of volume
at least $1$ that satisfy $\MV_K(\mathbf{1})=m$ is of order 
$O(m^{\max(p)})$ as $m \rightarrow \infty$. To see that the order of
this bound is sharp, fix $p \in \Delta_{d,d}$ and let $i \in [d]$ be 
an index satisfying $p_i = \max(p)$. Then any tuple
$K \in (\Km_{d,1})^d$ of the form
$K_i = m A$ and $K_j = A$ for every $ j \in [d]\setminus\{i\}$ for a convex
body $A$ with $\Vol(A)=1$ yields $\MV_K(\mathbf{1})=m$, while
$\MV_K(p)=m^{p_i}=m^{\max(p)}$.
\end{rem}

\begin{proof}[Proof of Theorem~\ref{T:main result}]
	The assertion is a direct consequence of Theorem~\ref{thm:asymp_Kound}.
\end{proof}

\begin{proof}[Proof of Corollary~\ref{cor:systems}]
	Let $P_i$ be the Newton polytope of $f_i$. Then $Q=P_1 + \cdots + P_d$ is the Newton polytope of the product $f_1 \cdots f_d$. The number of monomials in $f_1 \cdots f_d$ is at most the number of lattice points in $Q$. By Blichfedt's inequality \cite{blichfeldt1914anew}, one has $|Q \cap \Z^d| \le \Vol(Q) + d$, which in combination with Theorem~\ref{T:main result}, yields the assertion.
\end{proof}

\section{Confirmation of Conjecture~\ref{conj} in dimension $3$} \label{sec:dim:3}

 In this section we use a  computer-assisted approach to prove Theorem~\ref{three:maxima:in:dimension:three}, which establishes Conjecture~\ref{conj} in dimension $3$.  The high level description of the approach is as follows. In the setting of Conjecture~\ref{conj}, we know that $\mv_K \in (\log m) \AF_d$.  So, we calculate the vertices of the Aleksandrov-Fenchel polytope $\AF_3$ using a computer. Since $\Vol(K_1 + \cdots + K_\ell)$ is a linear combination of mixed volumes, we conclude that $\Vol(K_1 + \cdots + K_\ell)= F(\mv_K)$, where $F$ is an explicitly given convex function.  Since $F$ is convex, the maximum of $F$  on $(\log m) \AF_d$ is attained at the vertices of $(\log m) \AF_3$. The values of $F$ at the vertices of $(\log m ) \AF_3$ are functions of $m$ given by rather simple algebraic expressions. It turns out that one can bound all such expressions from above by $(m + \ell -1)^3$ for $m \in \R_{\ge 1}$.

While the Aleksandrov-Fenchel polytope has rather many vertices (there are $24$ vertices in total), the amount of algebraic computations that we need to carry out can be significantly reduced by taking into account the symmetries. On $\R^{\Delta_{3,3}}$ we introduce the action of the symmetric group $S_3$ on three elements. We introduce the action of $S_3$ on $\R^{\Delta_{3,3}}$ by defining $\sigma v$ as
\[	(\sigma v)(p_1,p_2,p_3) = v(p_{\sigma(1)},p_{\sigma(2)},p_{\sigma(3)})
\]
for $\sigma \in S_3$ and $v \in \R^{\Delta_{3,3}}$.  
It is clear that $\AF_3$ is invariant under the action of $S_3$ on $\R^{\Delta_{3,3}}$, which means that $\sigma v \in \AF_3$ holds for all $\sigma \in S_3$ and all $v \in \AF_3$. 

In the following proposition, we use $e_p$ with $p \in \Delta_{d,d}$ to denote the standard basis vectors of $\R^{\Delta_{d,d}}$. This means, $e_p(q)  \in \{0,1\}$ with $e_p(q)=1$ if and only if $p=q$.\label{key} 

\begin{prop}[Vertices of $\AF_3$]
\label{prop:af_vertices}
The polytope $\AF_3$ has $24$ vertices, 
which are split into $7$ orbits under the action of $S_3$ on $\AF_3$, with the orbits generated by the following seven vertices
\begin{align*}
v_1 = & e_{(1,1,1)}, \\
v_2 = & 2e_{(2,1,0)}+e_{(1,2,0)}+e_{(1,1,1)}, \\
v_3 = &2e_{(2,1,0)}+2e_{(1,2,0)}+e_{(1,1,1)}, \\
v_4 = &2e_{(2,1,0)}+e_{(1,2,0)}+\frac{1}{2} e_{(2,0,1)}
+ e_{(1,0,2)} + e_{(1,1,1)}, \\
v_5 = &2e_{(2,1,0)}+e_{(1,2,0)}+2e_{(2,0,1)}
+ e_{(1,0,2)} + e_{(1,1,1)}, \\
v_6 = &2e_{(2,1,0)}+e_{(1,2,0)}+2e_{(2,0,1)}
+ e_{(1,0,2)} + 3e_{(3,0,0)} +e_{(1,1,1)}, 
\\
v_7 = &\frac{2}{3}e_{(2,1,0)}+\frac{4}{3}e_{(1,2,0)}+\frac{4}{3}e_{(2,0,1)}
+ \frac{2}{3}e_{(1,0,2)}+
\frac{2}{3}e_{(0,2,1)} + 
\frac{4}{3}e_{(0,1,2)} + e_{(1,1,1)}.
\end{align*}
\end{prop}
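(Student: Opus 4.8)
The plan is to treat the statement as an exact, finite polyhedral computation and to organize it around the $S_3$-symmetry. First I would record the explicit H-description of $\AF_3$ inside $\R^{\Delta_{3,3}} \cong \R^{10}$: it is cut out by the normalization equation $w(\mathbf{1})=1$, the three nonnegativity constraints $w(3e_i)\ge 0$ (for $i\in[3]$), and the nine Aleksandrov-Fenchel concavity inequalities $2w(p)\ge w(p+e_i-e_j)+w(p-e_i+e_j)$, one for each unordered pair $\{i,j\}\subset[3]$ and each admissible center (namely $(1,1,1)$ together with the two edge points of the corresponding side of $\Delta_{3,3}$). All computations take place in the $9$-dimensional affine hyperplane $H=\{w:w(\mathbf{1})=1\}$. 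Before enumerating vertices I would confirm that $\AF_3$ really is a polytope: every coordinate is bounded above on $\AF_3$ by Proposition~\ref{prop:AF_tight} (via Remark~\ref{rem:AFP}), and every coordinate is nonnegative because concavity along each edge direction forces a concave sequence whose endpoints are the nonnegative corner values, so $\AF_3$ is bounded.

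Next I would verify the seven listed points and assemble them into orbits. Checking $v_1,\dots,v_7\in\AF_3$ is a routine finite check of the twelve inequalities and the equation. To see that each $v_i$ is genuinely a vertex I would exhibit, among the constraints active at $v_i$, a subsystem whose equality version together with $w(\mathbf{1})=1$ has $v_i$ as its unique solution (equivalently, the active normals span $H$); for instance, at $v_1=e_{(1,1,1)}$ all three corner constraints and the six edge concavity relations are tight and already pin every non-central coordinate to $0$. Computing stabilizers under $S_3$ then gives orbit sizes $1,6,3,6,3,3,2$ for $v_1,\dots,v_7$ respectively, which sum to $24$; since the seven representatives have incomparable support/coefficient patterns they lie in distinct orbits, and since $\AF_3$ is $S_3$-invariant its vertex set is a union of orbits, so these are bona fide collections of vertices.

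The substantive part, and the main obstacle, is completeness: proving that $\bigcup_i S_3\cdot v_i$ exhausts the vertex set, i.e. that $\AF_3=Q$, where $Q:=\conv\bigl(\bigcup_i S_3\cdot v_i\bigr)$. The inclusion $Q\subseteq\AF_3$ is immediate from the membership check and convexity. For the reverse inclusion $\AF_3\subseteq Q$ I would run an exact (rational-arithmetic) vertex enumeration, by double description or reverse search, to produce the candidate list, and then, to make the computer-assisted step independently checkable, certify the inclusion by linear-programming duality: for each facet inequality $\langle a,w\rangle\le b$ of $Q$, produce a Farkas certificate expressing $\langle a,\cdot\rangle - b$ as a nonnegative combination of the defining inequalities of $\AF_3$ plus a multiple of the equation $w(\mathbf{1})-1$. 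This proves $\max_{\AF_3}\langle a,\cdot\rangle\le b$ for every facet of $Q$, hence $\AF_3\subseteq Q$ and finally $\AF_3=Q$. Since $Q$ is $S_3$-invariant its facets fall into $S_3$-orbits, so only one certificate per orbit is needed, keeping the verification small.

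I expect the genuine difficulty to lie entirely in this exhaustiveness step: the individual membership and vertex checks are finite linear algebra, but ruling out additional vertices rigorously requires either trusting an exact vertex-enumeration routine or, as above, backing it up with explicit dual certificates. A fully human-checkable alternative would be a case analysis over which subsets of the twelve defining inequalities can be simultaneously active with rank $9$ in $H$; this is in principle elementary but combinatorially heavy, so the certificate-backed computation is the route I would actually take.
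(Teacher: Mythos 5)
Your proposal is correct and takes essentially the same route as the paper: the paper's entire proof of this proposition is a single exact vertex enumeration of the H-description of $\AF_3$ (the twelve inequalities plus the normalization $\mv(\mathbf{1})=1$) carried out in sagemath, which is precisely the computation at the core of your plan. The extras you supply --- the boundedness argument via Remark~\ref{rem:AFP} and nonnegativity along the edge lines, the orbit-size count $1+6+3+6+3+3+2=24$, and the Farkas certificates that would make the completeness step independently human-checkable --- go beyond what the paper records, since the paper simply trusts the exact rational-arithmetic polyhedral computation.
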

\begin{proof}
	We used sagemath \cite{sagemath} to determine the vertices of $\AF_3$, given by a system of linear inequalities. Sagemath is one of the many possibilities to do computations with polytopes over the field of rational numbers. Polymake is yet another possibility. 
\end{proof}

\begin{figure}
\begin{center}
	$v_1$
	\begin{tikzpicture}[scale=0.5,baseline=1cm]
	\draw (0.0,0.0) -- (3.0,5.196152422706632);
	\draw (0.0,0.0) -- (6.0,0.0);
	\draw (0.0,0.0) -- (0.0,0.0);
	\draw (2.0,0.0) -- (4.0,3.464101615137755);
	\draw (1.0,1.7320508075688774) -- (5.0,1.7320508075688774);
	\draw (2.0,0.0) -- (1.0,1.7320508075688774);
	\draw (4.0,0.0) -- (5.0,1.7320508075688774);
	\draw (2.0,3.464101615137755) -- (4.0,3.464101615137755);
	\draw (4.0,0.0) -- (2.0,3.464101615137755);
	\draw (6.0,0.0) -- (6.0,0.0);
	\draw (3.0,5.196152422706632) -- (3.0,5.196152422706632);
	\draw (6.0,0.0) -- (3.0,5.196152422706632);
	\filldraw[fill=white] (0.0,0.0) circle (0.5);
	\node (0-0-3) at (0.0,0.0) {0};
	\filldraw[fill=white] (1.0,1.7320508075688774) circle (0.5);
	\node (0-1-2) at (1.0,1.7320508075688774) {0};
	\filldraw[fill=white] (2.0,3.464101615137755) circle (0.5);
	\node (0-2-1) at (2.0,3.464101615137755) {0};
	\filldraw[fill=white] (3.0,5.196152422706632) circle (0.5);
	\node (0-3-0) at (3.0,5.196152422706632) {0};
	\filldraw[fill=white] (2.0,0.0) circle (0.5);
	\node (1-0-2) at (2.0,0.0) {0};
	\filldraw[fill=white] (3.0,1.7320508075688774) circle (0.5);
	\node (1-1-1) at (3.0,1.7320508075688774) {1};
	\filldraw[fill=white] (4.0,3.464101615137755) circle (0.5);
	\node (1-2-0) at (4.0,3.464101615137755) {0};
	\filldraw[fill=white] (4.0,0.0) circle (0.5);
	\node (2-0-1) at (4.0,0.0) {0};
	\filldraw[fill=white] (5.0,1.7320508075688774) circle (0.5);
	\node (2-1-0) at (5.0,1.7320508075688774) {0};
	\filldraw[fill=white] (6.0,0.0) circle (0.5);
	\node (3-0-0) at (6.0,0.0) {0};
	\end{tikzpicture}
	\hspace{5mm}
	$v_2$
	\begin{tikzpicture}[scale=0.5,baseline=1cm]
	\draw (0.0,0.0) -- (3.0,5.196152422706632);
	\draw (0.0,0.0) -- (6.0,0.0);
	\draw (0.0,0.0) -- (0.0,0.0);
	\draw (2.0,0.0) -- (4.0,3.464101615137755);
	\draw (1.0,1.7320508075688774) -- (5.0,1.7320508075688774);
	\draw (2.0,0.0) -- (1.0,1.7320508075688774);
	\draw (4.0,0.0) -- (5.0,1.7320508075688774);
	\draw (2.0,3.464101615137755) -- (4.0,3.464101615137755);
	\draw (4.0,0.0) -- (2.0,3.464101615137755);
	\draw (6.0,0.0) -- (6.0,0.0);
	\draw (3.0,5.196152422706632) -- (3.0,5.196152422706632);
	\draw (6.0,0.0) -- (3.0,5.196152422706632);
	\filldraw[fill=white] (0.0,0.0) circle (0.5);
	\node (0-0-3) at (0.0,0.0) {0};
	\filldraw[fill=white] (1.0,1.7320508075688774) circle (0.5);
	\node (0-1-2) at (1.0,1.7320508075688774) {0};
	\filldraw[fill=white] (2.0,3.464101615137755) circle (0.5);
	\node (0-2-1) at (2.0,3.464101615137755) {0};
	\filldraw[fill=white] (3.0,5.196152422706632) circle (0.5);
	\node (0-3-0) at (3.0,5.196152422706632) {0};
	\filldraw[fill=white] (2.0,0.0) circle (0.5);
	\node (1-0-2) at (2.0,0.0) {0};
	\filldraw[fill=white] (3.0,1.7320508075688774) circle (0.5);
	\node (1-1-1) at (3.0,1.7320508075688774) {1};
	\filldraw[fill=white] (4.0,3.464101615137755) circle (0.5);
	\node (1-2-0) at (4.0,3.464101615137755) {1};
	\filldraw[fill=white] (4.0,0.0) circle (0.5);
	\node (2-0-1) at (4.0,0.0) {0};
	\filldraw[fill=white] (5.0,1.7320508075688774) circle (0.5);
	\node (2-1-0) at (5.0,1.7320508075688774) {2};
	\filldraw[fill=white] (6.0,0.0) circle (0.5);
	\node (3-0-0) at (6.0,0.0) {0};
	\end{tikzpicture}
	\vspace{5mm}
   $v_3$
	\begin{tikzpicture}[scale=0.5,baseline=1cm]
	\draw (0.0,0.0) -- (3.0,5.196152422706632);
	\draw (0.0,0.0) -- (6.0,0.0);
	\draw (0.0,0.0) -- (0.0,0.0);
	\draw (2.0,0.0) -- (4.0,3.464101615137755);
	\draw (1.0,1.7320508075688774) -- (5.0,1.7320508075688774);
	\draw (2.0,0.0) -- (1.0,1.7320508075688774);
	\draw (4.0,0.0) -- (5.0,1.7320508075688774);
	\draw (2.0,3.464101615137755) -- (4.0,3.464101615137755);
	\draw (4.0,0.0) -- (2.0,3.464101615137755);
	\draw (6.0,0.0) -- (6.0,0.0);
	\draw (3.0,5.196152422706632) -- (3.0,5.196152422706632);
	\draw (6.0,0.0) -- (3.0,5.196152422706632);
	\filldraw[fill=white] (0.0,0.0) circle (0.5);
	\node (0-0-3) at (0.0,0.0) {0};
	\filldraw[fill=white] (1.0,1.7320508075688774) circle (0.5);
	\node (0-1-2) at (1.0,1.7320508075688774) {0};
	\filldraw[fill=white] (2.0,3.464101615137755) circle (0.5);
	\node (0-2-1) at (2.0,3.464101615137755) {0};
	\filldraw[fill=white] (3.0,5.196152422706632) circle (0.5);
	\node (0-3-0) at (3.0,5.196152422706632) {0};
	\filldraw[fill=white] (2.0,0.0) circle (0.5);
	\node (1-0-2) at (2.0,0.0) {0};
	\filldraw[fill=white] (3.0,1.7320508075688774) circle (0.5);
	\node (1-1-1) at (3.0,1.7320508075688774) {1};
	\filldraw[fill=white] (4.0,3.464101615137755) circle (0.5);
	\node (1-2-0) at (4.0,3.464101615137755) {2};
	\filldraw[fill=white] (4.0,0.0) circle (0.5);
	\node (2-0-1) at (4.0,0.0) {0};
	\filldraw[fill=white] (5.0,1.7320508075688774) circle (0.5);
	\node (2-1-0) at (5.0,1.7320508075688774) {2};
	\filldraw[fill=white] (6.0,0.0) circle (0.5);
	\node (3-0-0) at (6.0,0.0) {0};
	\end{tikzpicture}
	\\
	$v_4$
	\begin{tikzpicture}[scale=0.5,baseline=1cm]
	\draw (0.0,0.0) -- (3.0,5.196152422706632);
	\draw (0.0,0.0) -- (6.0,0.0);
	\draw (0.0,0.0) -- (0.0,0.0);
	\draw (2.0,0.0) -- (4.0,3.464101615137755);
	\draw (1.0,1.7320508075688774) -- (5.0,1.7320508075688774);
	\draw (2.0,0.0) -- (1.0,1.7320508075688774);
	\draw (4.0,0.0) -- (5.0,1.7320508075688774);
	\draw (2.0,3.464101615137755) -- (4.0,3.464101615137755);
	\draw (4.0,0.0) -- (2.0,3.464101615137755);
	\draw (6.0,0.0) -- (6.0,0.0);
	\draw (3.0,5.196152422706632) -- (3.0,5.196152422706632);
	\draw (6.0,0.0) -- (3.0,5.196152422706632);
	\filldraw[fill=white] (0.0,0.0) circle (0.5);
	\node (0-0-3) at (0.0,0.0) {0};
	\filldraw[fill=white] (1.0,1.7320508075688774) circle (0.5);
	\node (0-1-2) at (1.0,1.7320508075688774) {0};
	\filldraw[fill=white] (2.0,3.464101615137755) circle (0.5);
	\node (0-2-1) at (2.0,3.464101615137755) {0};
	\filldraw[fill=white] (3.0,5.196152422706632) circle (0.5);
	\node (0-3-0) at (3.0,5.196152422706632) {0};
	\filldraw[fill=white] (2.0,0.0) circle (0.5);
	\node (1-0-2) at (2.0,0.0) {1};
	\filldraw[fill=white] (3.0,1.7320508075688774) circle (0.5);
	\node (1-1-1) at (3.0,1.7320508075688774) {1};
	\filldraw[fill=white] (4.0,3.464101615137755) circle (0.5);
	\node (1-2-0) at (4.0,3.464101615137755) {1};
	\filldraw[fill=white] (4.0,0.0) circle (0.5);
	\node (2-0-1) at (4.0,0.0) {$\frac{1}{2}$};
	\filldraw[fill=white] (5.0,1.7320508075688774) circle (0.5);
	\node (2-1-0) at (5.0,1.7320508075688774) {2};
	\filldraw[fill=white] (6.0,0.0) circle (0.5);
	\node (3-0-0) at (6.0,0.0) {0};
	\end{tikzpicture}
	\hspace{5mm}
	$v_5$
	\begin{tikzpicture}[scale=0.5,baseline=1cm]
	\draw (0.0,0.0) -- (3.0,5.196152422706632);
	\draw (0.0,0.0) -- (6.0,0.0);
	\draw (0.0,0.0) -- (0.0,0.0);
	\draw (2.0,0.0) -- (4.0,3.464101615137755);
	\draw (1.0,1.7320508075688774) -- (5.0,1.7320508075688774);
	\draw (2.0,0.0) -- (1.0,1.7320508075688774);
	\draw (4.0,0.0) -- (5.0,1.7320508075688774);
	\draw (2.0,3.464101615137755) -- (4.0,3.464101615137755);
	\draw (4.0,0.0) -- (2.0,3.464101615137755);
	\draw (6.0,0.0) -- (6.0,0.0);
	\draw (3.0,5.196152422706632) -- (3.0,5.196152422706632);
	\draw (6.0,0.0) -- (3.0,5.196152422706632);
	\filldraw[fill=white] (0.0,0.0) circle (0.5);
	\node (0-0-3) at (0.0,0.0) {0};
	\filldraw[fill=white] (1.0,1.7320508075688774) circle (0.5);
	\node (0-1-2) at (1.0,1.7320508075688774) {0};
	\filldraw[fill=white] (2.0,3.464101615137755) circle (0.5);
	\node (0-2-1) at (2.0,3.464101615137755) {0};
	\filldraw[fill=white] (3.0,5.196152422706632) circle (0.5);
	\node (0-3-0) at (3.0,5.196152422706632) {0};
	\filldraw[fill=white] (2.0,0.0) circle (0.5);
	\node (1-0-2) at (2.0,0.0) {1};
	\filldraw[fill=white] (3.0,1.7320508075688774) circle (0.5);
	\node (1-1-1) at (3.0,1.7320508075688774) {1};
	\filldraw[fill=white] (4.0,3.464101615137755) circle (0.5);
	\node (1-2-0) at (4.0,3.464101615137755) {1};
	\filldraw[fill=white] (4.0,0.0) circle (0.5);
	\node (2-0-1) at (4.0,0.0) {2};
	\filldraw[fill=white] (5.0,1.7320508075688774) circle (0.5);
	\node (2-1-0) at (5.0,1.7320508075688774) {2};
	\filldraw[fill=white] (6.0,0.0) circle (0.5);
	\node (3-0-0) at (6.0,0.0) {0};
	\end{tikzpicture}
	\hspace{5mm}
	$v_6$
	\begin{tikzpicture}[scale=0.5,baseline=1cm]
	\draw (0.0,0.0) -- (3.0,5.196152422706632);
	\draw (0.0,0.0) -- (6.0,0.0);
	\draw (0.0,0.0) -- (0.0,0.0);
	\draw (2.0,0.0) -- (4.0,3.464101615137755);
	\draw (1.0,1.7320508075688774) -- (5.0,1.7320508075688774);
	\draw (2.0,0.0) -- (1.0,1.7320508075688774);
	\draw (4.0,0.0) -- (5.0,1.7320508075688774);
	\draw (2.0,3.464101615137755) -- (4.0,3.464101615137755);
	\draw (4.0,0.0) -- (2.0,3.464101615137755);
	\draw (6.0,0.0) -- (6.0,0.0);
	\draw (3.0,5.196152422706632) -- (3.0,5.196152422706632);
	\draw (6.0,0.0) -- (3.0,5.196152422706632);
	\filldraw[fill=white] (0.0,0.0) circle (0.5);
	\node (0-0-3) at (0.0,0.0) {0};
	\filldraw[fill=white] (1.0,1.7320508075688774) circle (0.5);
	\node (0-1-2) at (1.0,1.7320508075688774) {0};
	\filldraw[fill=white] (2.0,3.464101615137755) circle (0.5);
	\node (0-2-1) at (2.0,3.464101615137755) {0};
	\filldraw[fill=white] (3.0,5.196152422706632) circle (0.5);
	\node (0-3-0) at (3.0,5.196152422706632) {0};
	\filldraw[fill=white] (2.0,0.0) circle (0.5);
	\node (1-0-2) at (2.0,0.0) {1};
	\filldraw[fill=white] (3.0,1.7320508075688774) circle (0.5);
	\node (1-1-1) at (3.0,1.7320508075688774) {1};
	\filldraw[fill=white] (4.0,3.464101615137755) circle (0.5);
	\node (1-2-0) at (4.0,3.464101615137755) {1};
	\filldraw[fill=white] (4.0,0.0) circle (0.5);
	\node (2-0-1) at (4.0,0.0) {2};
	\filldraw[fill=white] (5.0,1.7320508075688774) circle (0.5);
	\node (2-1-0) at (5.0,1.7320508075688774) {2};
	\filldraw[fill=white] (6.0,0.0) circle (0.5);
	\node (3-0-0) at (6.0,0.0) {3};
	\end{tikzpicture}
	\\
	\vspace{3mm}
	$v_7$
	\begin{tikzpicture}[scale=0.5,baseline=1cm]
	\draw (0.0,0.0) -- (3.0,5.196152422706632);
	\draw (0.0,0.0) -- (6.0,0.0);
	\draw (0.0,0.0) -- (0.0,0.0);
	\draw (2.0,0.0) -- (4.0,3.464101615137755);
	\draw (1.0,1.7320508075688774) -- (5.0,1.7320508075688774);
	\draw (2.0,0.0) -- (1.0,1.7320508075688774);
	\draw (4.0,0.0) -- (5.0,1.7320508075688774);
	\draw (2.0,3.464101615137755) -- (4.0,3.464101615137755);
	\draw (4.0,0.0) -- (2.0,3.464101615137755);
	\draw (6.0,0.0) -- (6.0,0.0);
	\draw (3.0,5.196152422706632) -- (3.0,5.196152422706632);
	\draw (6.0,0.0) -- (3.0,5.196152422706632);
	\filldraw[fill=white] (0.0,0.0) circle (0.5);
	\node (0-0-3) at (0.0,0.0) {0};
	\filldraw[fill=white] (1.0,1.7320508075688774) circle (0.5);
	\node (0-1-2) at (1.0,1.7320508075688774) {$\frac{4}{3}$};
	\filldraw[fill=white] (2.0,3.464101615137755) circle (0.5);
	\node (0-2-1) at (2.0,3.464101615137755) {0};
	\filldraw[fill=white] (3.0,5.196152422706632) circle (0.5);
	\node (0-3-0) at (3.0,5.196152422706632) {0};
	\filldraw[fill=white] (2.0,0.0) circle (0.5);
	\node (1-0-2) at (2.0,0.0) {$\frac{2}{3}$};
	\filldraw[fill=white] (3.0,1.7320508075688774) circle (0.5);
	\node (1-1-1) at (3.0,1.7320508075688774) {1};
	\filldraw[fill=white] (4.0,3.464101615137755) circle (0.5);
	\node (1-2-0) at (4.0,3.464101615137755) {$\frac{4}{3}$};
	\filldraw[fill=white] (4.0,0.0) circle (0.5);
	\node (2-0-1) at (4.0,0.0) {$\frac{4}{3}$};
	\filldraw[fill=white] (5.0,1.7320508075688774) circle (0.5);
	\node (2-1-0) at (5.0,1.7320508075688774) {$\frac{2}{3}$};
	\filldraw[fill=white] (6.0,0.0) circle (0.5);
	\node (3-0-0) at (6.0,0.0) {0};
	\end{tikzpicture}
\end{center}	
\caption{Illustration to Proposition~\ref{prop:af_vertices}. The Aleksandrov-Fenchel polytope $\AF_3$ has $24$ vertices that are split into $7$ orbits under the action of $S_3$. The diagrams present the coordinates $v_i(p)$ of the seven vertices $v_1,\ldots,v_7$.}
\end{figure}
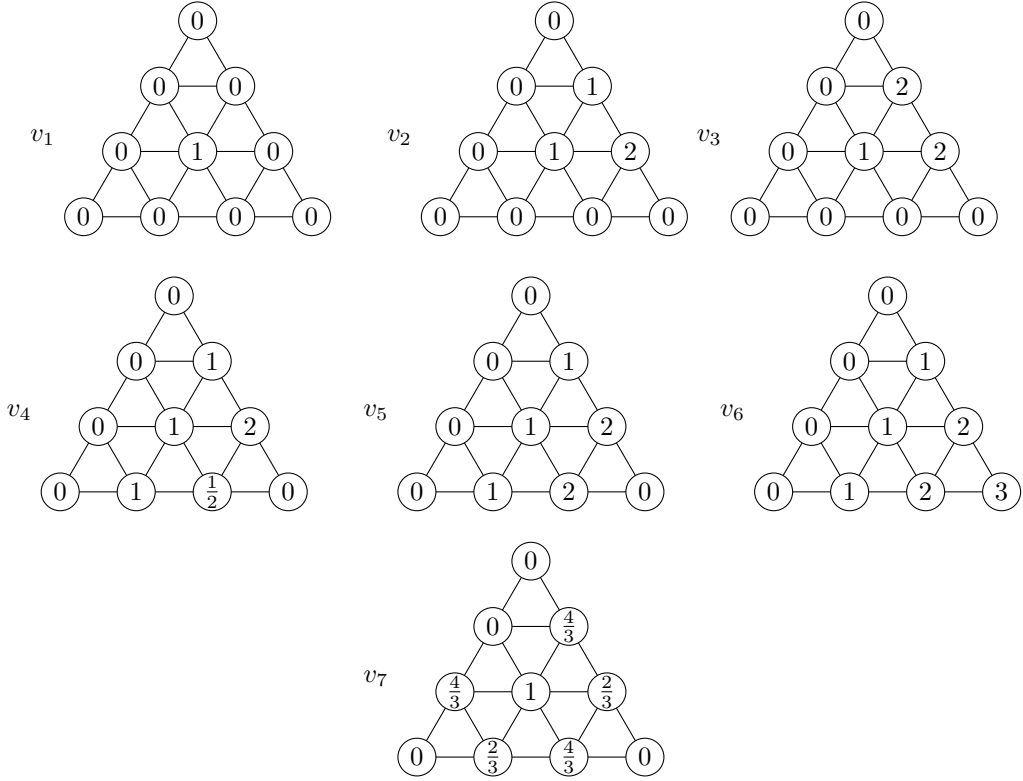

\begin{proof}[Proof of Theorem~\ref{three:maxima:in:dimension:three}]
	For all three assertions, the equality case is verified in a straightforward way. We prove the respective inequalities. 
	
	By Remark~\ref{R:single-volume-bound}, $\Vol(K_1) \le m^3$, so \eqref{dim:3:1} follows.
	For the verification of assertions \eqref{dim:3:2} and \eqref{dim:3:3}, we use Proposition~\ref{prop:af_vertices}. We fix the standard component-wise partial order $\le $ on 
	$\R^{\Delta_{3,3}}$, 
	that is, $\mv \le \mw$  if and only if $\mv(p) \le \mw(p)$ holds for every $p \in \Delta_{3,3}$. 
	It is clear that the vertices $v_1,\ldots,v_6$ of $\AF_3$ are related by 
	\begin{align}
	& v_1 \le v_2 \le v_3  \label{eq:v123}
	\\ & v_4 \le v_5 \le v_6 \label{eq:v456}
	\end{align}
	
	For \eqref{dim:3:2} we have 
	\begin{align*}
		\Vol(K_1 + K_2) & 
		= \sum_{i=0}^3 \binom{3}{i} \MV_K(i,3-i,0) 
		 = \sum_{i=0}^3 \binom{3}{i} 2^{{\mv_K(i,3-i,0)}},
	\end{align*}
	where ${\mv_K}\in (\log m)\AF_{3}$. 
	Changing the base from 2 to $m$, we see that 
	$\Vol(K_1 + K_2)$ is bounded by the maximum of the function $g_m : \R^{\Delta_{3,3}} \to \R$
\[
g_m(\mv) := m^{\mv(3,0,0)} + 3 m^{\mv(2,1,0)} + 3 m^{\mv(1,2,0)} + m^{\mv(0,3,0)}
\]
over $\mv \in \AF_3$. The function $g_m(\mv)$ is convex so that the maximum is attained at one of the vertices of $\AF_3$. By Proposition~\ref{prop:af_vertices}, the vertices of $\AF_3$ have the form $\sigma v_i$ with $\sigma \in S_3$ and $i \in \{1,\ldots,7\}$. Taking into account \eqref{eq:v123} and \eqref{eq:v456}, it follows that it is enough to check the cases $i \in \{3,6,7\}$. First, we detect the maximum of $g_m$ in the orbits generated by $v_3, v_6$ and $v_7$. It is straightforward to check that
\begin{align*}
\phi_3(m):=\max_{\sigma \in S_3} f_m(\sigma v_3) & = 2 + 6 m^2,
\\ \phi_6(m):= \max_{\sigma \in S_3} f_m(\sigma v_6) & = 1 + 3 m + 3 m^2  + m^3 = (m+1)^3
\\ \phi_7(m):= \max_{\sigma \in S_3} f_m(\sigma v_7) & = 2 + 3 m^{2/3} + 3 m^{4/3}.
\end{align*}
Clearly, $\phi_7(m) \le \phi_3(m) \le \phi_6(m)$, where $\phi_3(m) \le \phi_6(m)$ holds since $\phi_6(m) - \phi_3(m) = (m-1)^3$. 
Thus, $(m+1)^3$ is an upper bound for $\Vol(K_1+K_2)$.

	Similarly, for \eqref{dim:3:3} we have  
	$$\Vol(K_1+ K_2 + K_3) = \sum_{p \in \Delta_{3,3}} \binom{3}{p} \MV_K(p) =\sum_{p \in \Delta_{3,3}} \binom{3}{p} 2^{{\mv_K(p)}},$$ 
 where $\mv_K\in (\log m)\AF_3$. To obtain the desired upper bound for $\Vol(K_1 + K_2 + K_3)$ we maximize the function $f_m : \R^{\Delta_{3,3}} \to \R$ 
	\[
		f_m(\mv) := \sum_{p \in \Delta_{3,3}} \binom{3}{p} m^{\mv(p)}
	\]
	over $\mv \in \AF_3$. Again, the function $f_m$ is convex and so its maximum is necessarily attained in one of the vertices of $\AF_3$. On the other hand, it is clear that the function is invariant under the action of $S_3$ on $\AF_3$, as one clearly has $f_m(\sigma v) = f_m(v)$ for every $v \in \AF_3$ and $\sigma \in S_3$. It follows that it is enough to compare the values of $f_m$ on the vertices $v_1,\ldots,v_7$ from Proposition~\ref{prop:af_vertices}. That means $\mv \le \mw$ implies $f_m(\mv) \le f_m(\mw)$ for all $\mv,\mw \in \R^{\Delta_{3,3}}$. The latter property follows from the assumption $m \ge 1$ and the non-negativity of multinomial coefficients. In view of \eqref{eq:v123} and \eqref{eq:v456} 
	it suffices to compare $f_m(v_3), f_m(v_6)$ and $f_m(v_7)$. The non-negativity of $f_m(v_6) - f_m(v_3)$ for $m \ge 1$ can be phrased as the non-negativity of $f_{m+1}(v_6) - f_{m+1}(v_3)$ for $m \ge 0$. It turns out that $f_{m+1}(v_6) - f_{m+1}(v_3)$ is a polynomial in $m$ all of whose  coefficients are non-negative. Hence $f_{m+1}(v_6) - f_{m+1}(v_3) \ge 0$ holds for every $m \ge 0$, which implies $f_m(v_6) - f_m(v_3) \ge 0$ for $m\geq 1$. 
	
	Comparing $f_m(v_7)$ to $f_m(v_6)$ can be carried out in a similar fashion, but note that $v_7$ is a fractional point. We can still reduce the verification to the polynomial setting by noticing that $3 v_7$ is an integral point. 	
	The validity of $f_m(v_6) \ge f_m(v_7)$ for all $m \ge 1$ can be rephrased as the inequality $f_{(m+1)^3}(v_6) - f_{(m+1)^3}(v_7) \ge 0$ for all $m \ge 0$. The latter is true since $f_{(m+1)^3}(v_6) - f_{(m+1)^3}(v_7)$ is a polynomial  all of whose coefficients are non-negative. Summarizing, we conclude that $f_m(v_6) = (m+2)^3$ is the maximum of $f_m(\mv)$ for $\mv \in \AF_3$ and, hence, an upper bound on $\Vol(K_1 + K_2 + K_3)$.
\end{proof}

\section{Concluding remarks and outlook} \label{sec:remarks}

\subsection{On tuples maximizing the volume of the Minkowski sum}

 The following proposition converts Conjecture~\ref{conj} to a more specific situation. 
\begin{prop}\label{prop:specific}
	Let $m \in \R_{\ge 1}$ and let $\ell \in \{1,\ldots,d\}$. Consider a tuple $K= (K_1,\ldots,K_d)$ of convex bodies  satisfying  
	\begin{align*}
	\Vol(K_1) \ge 1, \ldots, \Vol(K_d) \ge 1,\quad\text{and }\ \V(K_1,\ldots,K_d) = m
	\end{align*}
	and maximizing $\Vol(K_1 + \cdots + K_\ell)$. Then 
	\begin{enumerate}
		\item \label{specific:Vol=1} For each such optimal tuple, $\Vol(K_i) = 1$ holds for all except possibly one choice of $i \in \{1,\ldots, \ell\}$ and for every $i > \ell$. 
		\item \label{specific:equality} For $\ell < d-1$, there exists an optimal tuple that satisfies $K_{\ell+1} = \cdots = K_d$. 
	\end{enumerate} 
\end{prop}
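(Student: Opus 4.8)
The plan is to use the fact that, by Proposition~\ref{prop:formula_mink}, the quantity $\Vol(K_1 + \cdots + K_\ell)$ depends only on the bodies $K_1,\ldots,K_\ell$ through
\[
\Vol(K_1 + \cdots + K_\ell) = \sum_{p \in \Delta_{\ell,d}} \binom{d}{p}\, \MV_K(p),
\]
while $K_{\ell+1},\ldots,K_d$ enter only through the constraint $\MV(K_1,\ldots,K_d)=m$. The main tool is the one-parameter family of rescalings $K_a \mapsto s K_a$, $K_b \mapsto s^{-1}K_b$ (for $a\neq b$ and $s>0$), which by multilinearity of the mixed volume preserves $\MV(K_1,\ldots,K_d)=m$ for all $s$, and keeps all volume constraints intact as long as $s$ stays in a suitable interval.

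For \eqref{specific:Vol=1} I would argue in two steps. First, if some index $i>\ell$ had $\Vol(K_i)>1$, I would rescale $K_1\mapsto sK_1$ and $K_i\mapsto s^{-1}K_i$. Since $K_i$ does not occur in the Minkowski sum, the value $\Vol(K_1+\cdots+K_\ell)=\sum_{p}\binom{d}{p}s^{p_1}\MV_K(p)$ is strictly increasing in $s$ (the term $p=(d,0,\ldots,0)$ alone forces this), and the feasibility bound $s\le \Vol(K_i)^{1/d}$ permits $s>1$; this contradicts optimality, so $\Vol(K_i)=1$ for every $i>\ell$. Second, suppose two indices $a,b\le \ell$ had $\Vol(K_a),\Vol(K_b)>1$. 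Rescaling $K_a\mapsto sK_a$, $K_b\mapsto s^{-1}K_b$ turns the sum into $F(s)=\sum_{p}\binom{d}{p}s^{p_a-p_b}\MV_K(p)$; writing $s=2^u$ this is a nonnegative combination of the convex functions $2^{(p_a-p_b)u}$, and since $p= d\,e_a$ (all mass in slot $a$) gives $p_a-p_b=d\neq0$, $F(2^u)$ is in fact strictly convex. Because $\Vol(K_a),\Vol(K_b)>1$ places $s=1$ in the interior of the feasible interval, a strictly convex function cannot attain its maximum there, again contradicting optimality. Hence at most one of $K_1,\ldots,K_\ell$ can have volume exceeding $1$.

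For \eqref{specific:equality}, I would start from an optimal tuple, so that $\Vol(K_i)=1$ for all $i>\ell$ by \eqref{specific:Vol=1}, and seek a single body $L$ with $\Vol(L)\ge 1$ and $\MV(K_1,\ldots,K_\ell,\underbrace{L,\ldots,L}_{d-\ell})=m$; replacing $K_{\ell+1},\ldots,K_d$ by $L$ then leaves $\Vol(K_1+\cdots+K_\ell)$ unchanged and yields an optimal tuple of the desired form. Observing that $\mathbf 1$ is the barycenter of the simplex with vertices $b_i = e_1+\cdots+e_\ell+(d-\ell)e_i$ for $i=\ell+1,\ldots,d$, Lemma~\ref{lem:concavity_simplices} gives $m^{d-\ell}=\MV_K(\mathbf 1)^{d-\ell}\ge \prod_{i=\ell+1}^d \MV_K(b_i)$, where $\MV_K(b_i)=\MV(K_1,\ldots,K_\ell,\underbrace{K_i,\ldots,K_i}_{d-\ell})$. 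Since the geometric mean of the $\MV_K(b_i)$ is at most $m$, some index $i^\ast>\ell$ satisfies $\MV_K(b_{i^\ast})\le m$. Taking $L=\lambda K_{i^\ast}$ with $\lambda=(m/\MV_K(b_{i^\ast}))^{1/(d-\ell)}\ge1$ gives $\MV(K_1,\ldots,K_\ell,L,\ldots,L)=\lambda^{d-\ell}\MV_K(b_{i^\ast})=m$ and $\Vol(L)=\lambda^d\ge1$, as required (here $d-\ell\ge 2$ because $\ell<d-1$).

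The main obstacle, and the place where the hypotheses really matter, is in \eqref{specific:equality}: one must guarantee that the common body is obtained by \emph{enlarging} rather than shrinking, so that the constraint $\Vol(L)\ge1$ survives. This is exactly what the Aleksandrov--Fenchel log-concavity of Lemma~\ref{lem:concavity_simplices} delivers, by forcing the geometric mean of the candidate values $\MV_K(b_i)$ below $m$ and hence some single value below $m$. The convexity arguments in \eqref{specific:Vol=1} are routine once the rescalings are set up; the only care needed is to confirm that the relevant one-variable functions are genuinely strictly convex (respectively strictly monotone), which follows from the positivity of all the mixed volumes $\MV_K(p)$.
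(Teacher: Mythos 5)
Your proposal is correct and takes essentially the same route as the paper's proof: part \eqref{specific:Vol=1} via the mixed-volume-preserving rescalings $K_a \mapsto sK_a$, $K_b \mapsto s^{-1}K_b$ (strict monotonicity for $i>\ell$, strict convexity with $s=1$ interior to the feasible interval for two indices in $\{1,\dots,\ell\}$), and part \eqref{specific:equality} via Lemma~\ref{lem:concavity_simplices} applied at the barycenter $\mathbf{1}$ of the simplex with vertices $b_i = e_1+\cdots+e_\ell+(d-\ell)e_i$, followed by a dilation restoring $\MV=m$. The only differences are cosmetic and harmless: you dilate the repeated body $L=\lambda K_{i^\ast}$ where the paper dilates $K_1$ by $m/m'$, and your substitution $s=2^u$ justifies strict convexity more cleanly than the paper's parenthetical (which asserts $t^p$ is strictly convex for all $p\neq 0$, false for $0<p\le 1$, though irrelevant there since exponents $\pm d$ occur with positive coefficients).
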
 

\begin{proof} 
	\eqref{specific:Vol=1}
	If $\alpha_i := \Vol(K_i)^{1/d} > 1$ holds for some $i > \ell$ then the tuple is not optimal since changing $K_1$ to $\alpha_i K_1$ and $K_i$ to $\frac{1}{\alpha_i} K_i$ we obtain a new tuple $K'= (K'_1,\ldots,K'_d)$
	of mixed volume $m$ with $\Vol(K'_1 + \cdots K'_\ell) > \Vol(K_1 + \cdots + K_\ell)$. 
	
	Now, assume that $\ell \geq 2$ and that for at least two choices of $i \in \{1,\ldots, \ell\}$ one has $\Vol(K_i) > 1$. We can assume $\Vol(K_1) > 1$ and $\Vol(K_2) > 1$.
	We consider the tuple $( \frac{1}{t} K_1, t K_2, K_3,\ldots, K_d)$, depending on $t > 0$. Clearly, $\MV(\frac{1}{t} K_1, t K_2,K_3,\ldots, K_d) = \MV(K)$. Furthermore,  the function $f : \R_{> 0} \to \R_{>0}$ given by $f(t) := \Vol( \frac{1}{t} K_1 + t K_2 + K_3 + \cdots + K_\ell)$ is a strictly convex function. This can be seen by writing $f(t)$ as a non-negative linear combination of functions $t^p$, which are strictly convex for every $p \in \R \setminus \{0\}$. For $\epsilon>0$ small enough and every $t \in [1-\epsilon,1+ \epsilon]$, the volumes of $\frac{1}{t} K_1$ and $t K_2$ are at least one. Since $f(t)$ is strictly convex, its maximum on $[1-\epsilon,1+\epsilon]$ is attained at the boundary and is strictly larger than $f(0)$.  This contradicts the optimality of the tuple $K$ and shows that $\Vol(K_i)=1$ for all except possible one choice of $i \in \{1,\ldots,\ell\}$.

	\eqref{specific:equality} In view of Lemma~\ref{lem:concavity_simplices}, 
	\begin{align*}
		m = V(K_1,\ldots,K_d) 
		& \ge \prod_{i=\ell+1}^d V(K_1,\ldots,K_\ell, K_i,\dots, K_i)^{\frac{1}{d-\ell}} 
		\\ & \ge \min_{i \in \{\ell+1,\ldots,d\}} V(K_1,\ldots,K_\ell, K_i,\ldots, K_i) =: m'. 
	\end{align*}

	So, taking the $i$ for which the above minimum is attained and replacing the tuple $(K_1,\ldots,K_d)$ by the tuple $( \frac{m}{m'} K_1,K_2,\ldots, K_\ell, K_i,\ldots, K_i)$, we keep the mixed volume of the tuple unchanged without decreasing the volume of the Minkowski sum of its first $\ell$ bodies. 
\end{proof}

The latter proposition somewhat simplifies the original optimization problem, but still the problem remains non-trivial. Say, for $d=4$ and $\ell=2$, the problem is turned to the maximization of $\Vol(A+B)$ subject to $\Vol(A) \ge 1, \Vol(B)=\Vol(C)=1$ and  $\MV(A,B,C,C)=m$.

\subsection{Relations between mixed volumes: the quest for tight inequalities and a complete description.}\label{S:relations}
The work on the problem of bounding $\Vol(\Sigma(K))$ has taught us that the current knowledge of the relations between mixed volumes is still rather limited and the literature might miss some important inequalities beyond the classical ones. Such new inequalities would probably be of interest to a broader community of experts, including researchers interested in metric aspects of convex sets, as well as researchers working on combinatorial aspects of algebraic geometry. 
The problem of describing the relationship between mixed volumes 
goes back to the 1960 work \cite{Shephard1960} of Shephard (see also Problems~6.1 in \cite[p.~109]{gruber2007convex} for a similar problem for the so-called Quermassintegrals).

In \cite{Shephard1960} Shephard provided a complete description of mixed volume configurations for two $d$-dimensional convex bodies.
Recall that $\Km_d$ denotes the family of all $d$-dimensional convex bodies in $\R^d$.

\begin{thm}[{Shephard \cite[Thm.~4]{Shephard1960}}]
	\label{thm:shep}
	The mixed-volume configuration space
	$\MV(\Km_d, \Delta_{2,d})$
	is the set of all $\V \in \R_{>0}^{\Delta_{2,d}}$ that satisfy the Aleksandrov-Fenchel inequalities
	\begin{align*}
	\V(i,d-i)^2 & \ge \V(i+1,d-i-1) \V(i-1,d-i+1) \quad \forall i \in [d-1].  
	\end{align*}
	Equivalently, the logarithmic mixed volume configuration space $\mv(\Km_d,\Delta_{2,d})$ is a polyhedral cone, described by the linearized Aleksandrov-Fenchel inequalities
	\[
	2 \mv(i,d-i) \ge \mv(i+1,d-i-1) + \mv(i-1,d-i+1) \quad \forall i \in [d-1].
	\]
\end{thm}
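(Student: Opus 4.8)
The plan is to prove the two inclusions separately. Write $V_i := \V(i,d-i)$ for $i=0,\dots,d$, so that the asserted inequalities read $V_i^2\ge V_{i-1}V_{i+1}$, i.e.\ the sequence $(V_0,\dots,V_d)$ is log-concave. The inclusion $\MV(\Km_d,\Delta_{2,d})\subseteq\{\V\in\R_{>0}^{\Delta_{2,d}}: V_i^2\ge V_{i-1}V_{i+1}\}$ is immediate from the Aleksandrov--Fenchel inequality in the form of Theorem~\ref{thm:af} (with $n=2$), while positivity of all entries is the content of Lemma~\ref{lem:concavity_simplices} and Remark~\ref{R:single-volume-bound}. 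Passing to logarithms, this says the logarithmic configuration space $\mv(\Km_d,\Delta_{2,d})$ is contained in the polyhedral cone $C:=\{v\in\R^{d+1}: 2v_i\ge v_{i-1}+v_{i+1},\ i=1,\dots,d-1\}$ of concave sequences. All of the content is in the reverse inclusion: every positive log-concave sequence must be realized by some pair $(K_1,K_2)$ of full-dimensional bodies.

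For the realization I would encode a pair $(K_1,K_2)$ by its mixed-volume polynomial $p_K(s,t):=\Vol(sK_1+tK_2)=\sum_{i=0}^d\binom{d}{i}V_i\,s^i t^{d-i}$, which is just Proposition~\ref{prop:formula_mink}; realizing $(V_i)$ is the same as realizing this homogeneous degree-$d$ polynomial as the volume polynomial of a Minkowski pencil. Two elementary operations generate new realizable polynomials from old: scaling a single body, $K_1\mapsto\lambda K_1$ and $K_2\mapsto\mu K_2$, performs $s\mapsto\lambda s$, $t\mapsto\mu t$ and hence shifts the log-sequence $v_i:=\log V_i$ by the affine sequence $i\mapsto i\log\lambda+(d-i)\log\mu$; and the Cartesian product $(A_1\times B_1,\,A_2\times B_2)$ in $\R^{d_1}\times\R^{d_2}$ multiplies the two mixed-volume polynomials. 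The first operation shows the realizable set is invariant under adding the full lineality space of $C$ (the affine sequences); the second shows that products of segment pairs, i.e.\ axis-parallel boxes, realize precisely the polynomials factoring into positive linear factors, namely the real-rooted members of $C$ (a proper subcone).

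From here I would reduce to the extreme rays of $C$ modulo its lineality. Those extreme rays are the single-kink ``tent'' sequences $g_k$ ($k=1,\dots,d-1$), piecewise affine with one downward break at $k$, and a general concave sequence is an affine sequence plus a non-negative combination of tents. The box constructions already handle the affine directions, so it suffices to realize each tent together with a principle for combining realizations. This is exactly where the main obstacle lies: the tents are typically \emph{not} real-rooted, their mixed-volume polynomials have complex roots, and such polynomials cannot arise as products of positive linear factors; in particular neither boxes, nor cylinders over lower-dimensional pairs, nor any Cartesian product of lower-dimensional realizations can produce them, since all these operations only multiply polynomials and hence preserve real-rootedness. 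A genuinely $d$-dimensional construction is therefore unavoidable.

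To finish I would argue that the logarithmic configuration space $\mv(\Km_d,\Delta_{2,d})$ is convex and closed, and that it is unbounded along each tent direction $g_k$ (a deep kink amounts to making one intermediate mixed volume arbitrarily large relative to its neighbours, which one arranges by a suitably skewed pair of bodies). A closed convex cone that contains the lineality space of $C$, is contained in $C$, and reaches infinitely far along every extreme ray of the quotient $C/\mathrm{lineality}$ must equal $C$, completing the proof. The decisive and hardest step is the convexity, equivalently an averaging principle realizing the geometric mean of two realizable sequences at the \emph{same} dimension $d$, since the obvious body operations either raise the dimension or only multiply polynomials. A viable alternative that sidesteps convexity is topological: restrict to an explicit finite-dimensional family of pairs, show the induced map into $C$ is proper and sends the boundary of the parameter domain onto the boundary of $C$, and invoke a degree or invariance-of-domain argument to obtain surjectivity onto the interior, with the boundary recovered by a limiting argument.
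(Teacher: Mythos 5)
This statement is not proved in the paper at all: it is quoted as an external result of Shephard (with a refined version attributed to Henk et al.), so there is no internal proof to compare against. Judged on its own merits, your proposal correctly dispatches the easy inclusion (Aleksandrov--Fenchel plus positivity gives $\mv(\Km_d,\Delta_{2,d})\subseteq C$), and your convex-geometric reduction of the hard inclusion is sound: the lineality space of $C$ consists of the affine sequences, which are absorbed by rescaling the two bodies; the extreme rays of $C$ modulo lineality are the single-kink tents; and skewed box pairs (e.g.\ $K_1$ a box with $k$ long sides, $K_2$ a cube) do produce realizable sequences whose directions converge to each tent, so unboundedness along every extreme ray is available. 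You are also right that for $d\ge 3$ tents are not real-rooted, so no product/box construction can finish the job.

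The genuine gap is the step you yourself flag as decisive: the convexity (together with closedness and the cone property) of the realizable set $\mv(\Km_d,\Delta_{2,d})$. Nothing in your proposal, and no standard operation on convex bodies, produces from two pairs $(K_1,K_2)$ and $(L_1,L_2)$ a third pair whose log mixed-volume sequence is the midpoint of the two given ones; Minkowski sums, products and scalings all act the wrong way (additively or multiplicatively on the volume polynomial, not additively on its logarithmic coefficient sequence). Worse, the assertion that this set is a closed convex cone is, in view of the identity you are trying to prove, essentially equivalent to the theorem itself --- the only known way to see convexity is to prove the set equals the polyhedral cone $C$ --- so the argument as structured is circular. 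The fallback degree-theoretic argument has the same status: you do not exhibit the finite-dimensional family, nor verify properness or the boundary behaviour, and these verifications are where all the work lies. What is actually needed, and what the cited classical proofs supply, is a direct construction: for an \emph{arbitrary} positive log-concave sequence $(V_0,\dots,V_d)$ one must exhibit a concrete pair of convex bodies realizing it exactly, not merely asymptotically along extreme rays. Until such a construction (or a genuine proof of convexity/closedness) is supplied, the proposal is a plausible strategy but not a proof.
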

A refined version of Theorem~\ref{thm:shep} can be found in \cite[Lemma~2.1]{henk2012steiner}. This brings us to the following natural question about  mixed volume configuration spaces in general.



\begin{prob}\label{prob:semialg}
	Let $n, d \in \Z_{\ge 2}$ and
	let $\Km$ be the family of all compact convex sets in $\R^d$. Is $\MV(\Km,\Delta_{n,d})$ a semialgebraic set? That is, can $\MV(\Km,\Delta_{n,d})$ be described by a boolean combination of polynomial inequalities?
\end{prob}

Problem~\ref{prob:semialg} is open for all choices of $n$ and $d$ except for the case $n=2$, covered by Theorem~\ref{thm:shep}, and the case $(n,d)= (3,2)$, solved by Heine \cite{heine1938wertvorrat}.  To formulate the result of Heine we consider the family $\Km$ non-empty compact convex subsets of $\R^2$. With each triple $K=(K_1,K_2,K_3) \in \Km^3$ of such sets one can associate the matrix
\[
M_K := \begin{pmatrix} \MV(K_1,K_1) & \MV(K_1,K_2) & \MV(K_1,K_3) 
\\ \MV(K_2,K_1) & \MV(K_2,K_2) & \MV(K_2,K_3)
\\ \MV(K_3,K_1) & \MV(K_3,K_2) & \MV(K_3,K_3)
\end{pmatrix}.
\] 
The matrix $M_K$ is symmetric and has non-negative entries. Clearly, $\V(\Km,\Delta_{3,2})$ is linearly isomorphic to $\setcond{ M_K }{K \in \Km^3}$. By \eqref{AFI}, $\MV(K_i,K_j)^2 \ge \MV(K_i,K_i) \MV(K_j,K_j)$ holds for all $1 \le i < j \le 3$, which means 
the three diagonal minors of $M_K$ are non-positive. It turns out that these conditions are not enough to describe the respective mixed volume configuration, because there is yet another inequality $\det(M_K) \ge 0$, which is missing. As was shown by Heine, adding this inequality, ones obtains a complete description:

\begin{thm}[{Heine \cite[p.\,118]{heine1938wertvorrat}}]
	\label{thm:heine}
	Let $\Km$ be the family of compact convex subsets of $\R^2$. Then $\setcond{ M_K }{K \in \Km^3}$ is the set of symmetric $3 \times 3$ matrices with non-negative entries that satisfy the conditions
	\begin{align*}
	\det(M) & \ge 0, 
	& \det(M_{\{1,2\}}) & \le 0, 
	& \det(M_{\{1,3\}}) & \le 0, 
	& \det(M_{\{2,3\}}) & \le 0.
	\end{align*}
	Here, $\det(M_I)$ is the diagonal minor indexed by $I \subseteq \{1,2,3\}$. 
\end{thm}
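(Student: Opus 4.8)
The plan is to prove both inclusions between the realizable set $\mathcal{R}:=\setcond{M_K}{K\in\Km^3}$ and the set $\mathcal{C}$ of symmetric $3\times 3$ matrices with non-negative entries satisfying the four sign conditions. Throughout I work with the planar mixed area, that is, the restriction of $\MV$ to pairs of bodies in $\R^2$, regarded as a symmetric bilinear form $A(K,L):=\MV(K,L)$. It is convenient to extend $A$ to the real vector space of formal differences of support functions (``virtual bodies''), where $\MV(K,K)=\Vol(K)$ occupies the diagonal. The inclusion $\mathcal{R}\subseteq\mathcal{C}$ is necessity, and $\mathcal{C}\subseteq\mathcal{R}$ is the realization statement; I expect the latter to be the crux.

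For necessity, symmetry and non-negativity of the entries are immediate properties of mixed volumes, and the three conditions $\det(M_K)_{\{i,j\}}\le 0$ are just a rewriting of the planar Aleksandrov--Fenchel (Minkowski) inequalities $\MV(K_i,K_j)^2\ge\MV(K_i,K_i)\MV(K_j,K_j)$ from \eqref{AFI}. The remaining condition $\det(M_K)\ge 0$ I would obtain from the signature of the mixed-area form. The key point is that \eqref{AFI}, asserting the reverse Cauchy--Schwarz inequality for every pair of bodies, is equivalent to the form $A$ having \emph{at most one positive eigenvalue} on any finite-dimensional subspace; this is the planar instance of the hyperbolic (Hodge-type) nature of mixed volumes, visible in Fourier coordinates where only the constant mode is positive and translations are null. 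Restricting to the span of $K_1,K_2,K_3$, the positivity of the diagonal entries $\Vol(K_i)$ (in the full-dimensional case) forces exactly one positive eigenvalue, so the $3\times 3$ matrix has signature $(1,\le 2)$ and hence $\det(M_K)\ge 0$; lower-dimensional sets are handled by approximation and continuity of $\det$.

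For sufficiency I would first record a purely algebraic observation that pins down the signature: writing $e_1,e_2,e_3$ for the elementary symmetric functions of the eigenvalues of $M\in\mathcal{C}$, the conditions give $e_1=\operatorname{tr}M\ge 0$, $e_2=\sum_{i<j}\det M_{\{i,j\}}\le 0$, and $e_3=\det M\ge 0$; a short check shows these are incompatible with two positive eigenvalues, so every $M\in\mathcal{C}$ has signature $(1,\le 2)$. Consequently $M$ can be realized as the Gram matrix of three \emph{virtual} bodies in the Lorentzian body-space. The actual work is to upgrade such a realization to one by \emph{genuine} convex bodies. My plan is to reduce to a finite-dimensional problem by parametrizing bodies through their (non-negative) surface area measures supported on a common finite set of directions --- equivalently, polygons with prescribed edge directions subject to the Minkowski closing relation $\int_{S^1}u\,d\mu(u)=0$ --- in which the mixed areas become explicit bilinear forms in the edge lengths. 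On the interior of $\mathcal{C}$ (strict inequalities, signature exactly $(1,2)$) I would exhibit an explicit family, for instance $K_i=\alpha_i D+\beta_i S_{\theta_i}$ with $D$ a disk and $S_{\theta_i}$ a segment, compute the resulting analytic map from $(\alpha_i,\beta_i,\theta_i)$ to the six mixed areas, and show by a Jacobian/openness argument together with the necessity inclusion that its image is precisely the interior region. Boundary points are then matched by continuity: when some $\det M_{\{i,j\}}=0$ the corresponding two bodies become homothetic (the equality case of \eqref{AFI}), and the problem collapses to the two-body description of Shephard (Theorem~\ref{thm:shep}), while $\det M=0$ corresponds to degenerate, e.g. purely segment-like, configurations already covered by letting the $\alpha_i$ tend to $0$.

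The main obstacle is exactly this last step: the Lorentzian Gram-matrix realization is cheap once the signature is known, but it lands among virtual bodies, and the genuine-body constraint (convexity of the support function, i.e. non-negativity of the surface area measure) must be shown compatible with hitting \emph{every} matrix in $\mathcal{C}$. Making the surjectivity of the explicit family rigorous over the whole closed region --- verifying that the four inequalities cut out precisely the image and that no further hidden constraint appears on the boundary --- together with a careful treatment of the degenerate strata, is where the genuine effort lies; the necessity direction, by contrast, I expect to be routine given the signature interpretation of \eqref{AFI}.
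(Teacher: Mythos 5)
The first thing to note is that the paper contains no proof of this statement: it is quoted directly from Heine's 1938 work \cite[p.~118]{heine1938wertvorrat}, and the paper only supplements it with an example showing that the condition $\det(M)\ge 0$ is not redundant. So your attempt can only be measured on its own merits. Your necessity half is essentially sound: the three $2\times 2$ minor conditions are exactly the planar Minkowski (Aleksandrov--Fenchel) inequalities, and $\det(M_K)\ge 0$ does follow from the hyperbolicity of the mixed-area form (at most one positive eigenvalue on any finite-dimensional space of differences of support functions), since non-negative trace then forces the eigenvalue pattern $(+,\le 0,\le 0)$ or the zero matrix, giving $\det \ge 0$ in all cases, with lower-dimensional bodies handled by approximation. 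Your algebraic observation that $e_1\ge 0$, $e_2\le 0$, $e_3\ge 0$ exclude two positive eigenvalues is also correct.

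The genuine gap is in the sufficiency direction, which is the entire content of Heine's theorem, and you have in effect conceded it. Two concrete failures. First, the logic ``the image of the family $K_i=\alpha_i D+\beta_i S_{\theta_i}$ is open (Jacobian) and contained in $\mathcal{C}$ (necessity), hence equals the interior of $\mathcal{C}$'' is invalid: an open subset of a connected open set need not be the whole set unless it is also relatively closed, so you would need properness of the parametrization (controlling sequences where $\alpha_i\to 0$, $\beta_i\to\infty$, or angles collide), a degree-theoretic argument, or an explicit solution of the six polynomial equations; none is supplied. Second, nothing shows that the disk-plus-segment family is rich enough to begin with: a priori its image could satisfy additional hidden inequalities, which is precisely the phenomenon this theorem is about --- the naive guess that the Aleksandrov--Fenchel inequalities alone cut out the realizable set is false here, as the paper's own example after the theorem shows. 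The boundary strata are likewise only waved at: equality in the planar Minkowski inequality forces homothety only for full-dimensional bodies (parallel segments are another equality case), so the reduction to Shephard's two-body theorem on the strata $\det M_{\{i,j\}}=0$ needs its own construction. As written, the proposal establishes the easy inclusion $\setcond{M_K}{K\in\Km^3}\subseteq\mathcal{C}$ and a correct signature analysis of $\mathcal{C}$, but not the realization statement.
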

The condition $\det(M) \ge 0$ in Theorem~\ref{thm:heine} is non-redundant. Consider, for example, the matrix
\[
M = \begin{pmatrix} 1 & 1 & 2
\\ 1 & 1 & 1
\\ 2 & 1 & 1
\end{pmatrix}
\] 
with $\det(M)  = -1, \ \det(M_{\{1,2\}}) = 0, \ \det(M_{\{1,3\}}) = -3, \  \det(M_{\{2,3\}})=0$, for which all of the conditions but $\det(M) \ge 0$ are fulfilled.  
By Theorem~\ref{thm:heine}, Problem~\ref{prob:semialg} has a positive solution for $n=3$ and $d=2$, as it provides an explicit description of $\V(\Km,\Delta_{3,2})$ by a system of non-strict polynomial inequalities. The smallest open cases of the classification problem for $\V(\Km,\Delta_{n,d})$ are $(n,d)=(4,2)$ and $(n,d)=(3,3)$. In view of Heine's theorem, already in dimension $2$, \eqref{AFI} does not provide all possible relations between mixed volumes. As a complement, our result clearly indicates that, in dimension at least five, \eqref{AFI} does not even provide the correct asymptotic approximation of relations between mixed volumes.

\bibliographystyle{amsalpha}
\bibliography{lit}

\end{document}